\numberwithin{equation}{section} %% Comment out for sequentially-numbered
\numberwithin{figure}{section} %% Comment out for sequentially-numbered
  \theoremstyle{plain}
  \newtheorem{thm}{Theorem}[section]
  \theoremstyle{plain}
  \theoremstyle{plain}
  \newtheorem{prop}[thm]{Proposition}
  \theoremstyle{Remark}
  \newtheorem{rem}[thm]{Remark}
  \theoremstyle{remark}
  \theoremstyle{plain}
  \newtheorem{lem}[thm]{Lemma}
  \newtheorem{mydef}{Definition}
\newcommand{\R}{\mathbb{R}}
\def\bfR#1{{\bf R}^#1}
\def\com#1{ \hbox{#1}}
\def\inf{\hbox{\rm inf}}
\def\sup{\hbox{\rm sup}}
\def\max{\hbox{\rm max}}
\def\min{\hbox{\rm min}}
\def\<{{\langle }}
\def\>{{\rangle }}
\def\bfR#1{{\bf R}^#1}
\def\com#1{ \quad\hbox{#1}\quad}
\def\sf{\hbox{ $SF(n+1,k,a)$}}
\def\lf{\hbox{ $L(n+2,k,a)$}}
\def\R{\hbox{\bf R}}
\def\<{{\langle }}
\def\>{{\rangle }}
\begin{document}

\title[cmc hypersurfaces]{CMC hypersurfaces with two principal curvatures}

\author{ Oscar M. Perdomo }

\date{\today}

%\curraddr{Department of Mathematics\\
%Central Connecticut State University\\
%New Britain, CT 06050\\
%}

%\email{ perdomoosm@ccsu.edu}

\subjclass[2000]{58E12, 58E20, 53C42, 53C43}
\subjclass[2000]{58E12, 58E20, 53C42, 53C43}

\maketitle

\begin{abstract}

This paper explains the construction of all hypersurfaces with constant mean curvature -cmc- and exactly two principal curvatures on any space form endowed with a semi-riemannian metric. Here we will consider riemannian hypersurfaces as well as hypersurfaces with semi-riemannian metrics induced by the ambient space. We show explicit immersions for all these hypersurfaces. We will see that for any given ambient space, the family of cmc hypersurfaces with two principal curvatures depends on two parameters, $H$ and $C$. We describe the range for all possible $(H,C)$ associated with complete cmc hypersurfaces.   We end the paper by finding optimal bounds for the traceless second fundamental form in terms of $H$ and $n$.
\end{abstract}

\tableofcontents

\section{Introduction.}

Finding non trivial examples of constant mean curvature --cmc-- hypersurfaces contributes to a better understanding of this important family of submanifolds. In 1970 Otsuki, \cite{O}, discovered that for minimal hypersurfaces on the $(n+1)$-dimensional sphere, the conditions ``for every point in the hypersurface there are exactly two principal curvatures'' provides a family of hypersurfaces that could be understood by solving an ordinary differential equation.

%We will use a slightly different approach than the Bing-Ye Wu's approach.  

Bing-Ye Wu has made some major contributions constructing cmc riemannian hypersurfaces  with exactly two principal curvatures. He showed that these hypersurfaces  depend on two parameters, a constant $C$ and  the mean curvature $H$. He was able to find explicit parametrizations for most of the examples when $C\ne0$. When the ambient space is the sphere or the Euclidean space, Wu, \cite{W} and  \cite{W4}, found complete classifications. When the ambient space is the hyperbolic space or the de Sitter space, Wu \cite{W5} and \cite{W3} found some explicit parametrizations, but not all of them due to the fact that he did not find an expression for the parametrizations when $C=0$. Also, he has some small mistakes in the papers \cite{W1}, \cite{W2}, \cite{W3} and \cite{W5} due to the fact that he missed some solutions when $C$ reaches the  boundary values of the set of admissible $(H,C)$.  In particular, in  paper \cite{W2} the author claims to have all the solution but he is missing those with $C=0$. For the Lorentz-Minkowski space and the anti-de-Sitter space, Wu \cite{W2} and \cite{W1} claims to have a classification but again due to the same type of mistake, he is missing some solutions. For the Lorentz-Minkowski space $C=0$ turns out to be a boundary value for the admissible values of $(H,C)$ that generate cmc hypersurace examples with two principal curvatures. The last section in this paper shows some of the examples missed in Wu's papers.

The process of deciding if a solution is embedded is done by analyzing the profile curve. Due to the length of the present paper, the author is leaving this task for a future work. For the case when ambient space is the  three-dimensional  sphere, this process was started in \cite{P}  and completed in \cite{A-L}. For any ambient space, this process of deciding if the hypersurface is embedded or not, starts by finding a more explicit formulas for the parametrization, like those presented in \cite{P2} %or \cite{P5}.

%%%%%%%
%\rule{30mm}{2pt}
%%%%%%%

In \cite{P}, the author provided  a detailed study of the same type of hypersurfaces that we are considering here for the particular case when the ambient space is the $(n+1)$-dimensional sphere. The results in \cite{P}, contributed to a breakthrough in the understanding of embedded surfaces with cmc on $S^3$ by Li and Andrews \cite{A-L}. Andrews and Li showed that the only embedded tori with constant mean curvature in the three dimensional sphere are those that are rotational, which were very explicitly listed in the paper \cite{P}. Another similar breakthrough is the proof given by Brendle, \cite{B},  of the  Lawson conjecture that stated that the only embedded minimal tori on the three dimensional sphere is the Clifford torus. Regarding the existence of embedded examples, Otsuki, \cite{O}, proved that the only embedded minimal hypersurfaces on spheres with two principal curvatures are those with constant principal curvature. Brito and Leite, \cite{B-L} showed that the that there are embedded rotational non trivial hypersurface on the sphere with cmc  $H$, provided $H$ is close to zero. These examples has exactly two principal curvatures. In \cite{P1}, the author discuss a family of  cmc embedded hypersurfaces with two principal curvatures on the hyperbolic space.

We are considering all types of ambient spaces, in particular, the Lorentz-Minkowsky space, the de Sitter space and the anti de Sitter space. Cmc hypersurfaces in these ambient spaces, as explained in \cite{M1}, have interest in General relativity due to the fact that they are convenient initial data for the Cauchy problem of the Einstein equation and also they are suitable for studying propagation of gravitational ways. Some other papers with results on these type of hypersurfaces are \cite{A}, \cite{C-Y}, \cite{I}, \cite{M2}, \cite{N}, \cite{T}.  

There is a huge amount of literature with theorems showing that hypersurfaces with cmc $H$ and  two principal curvatures that  satisfy some bounds for the values of the norm of the shape operator must have constant principal curvatures. See for example \cite{C-W}, \cite{Ad1}, \cite{Ad2}, \cite{Ad3}, \cite{Ad4}, \cite{Ad5}, \cite{Ad6}, \cite{Ad7}, \cite{Ad8}, \cite{Ad9}, \cite{Ad10}, \cite{Ad11}, \cite{Ad12}, \cite{Ad13}, \cite{Ad14}, \cite{Ad15}, \cite{Ad16}, \cite{Ad17}, \cite{Ad18}, \cite{Ad19}, \cite{Ad20}. In section \ref{s2 results}  we provide similar characterizations an optimal bounds for the norm of the shape operator. In particular we generalize and extend Alias and Martinez's result \cite{A-M} to all the ambient spaces and to every type of hypersurface.

\section{Space Forms.}

For any non negative integer $k< m$, we will denote by $\R_k^{m}$ the space $\R^{m}$ endowed with the metric

$$\<v,w\>=-v_1w_1-\dots-v_kw_k+v_{k+1}w_{k+1}+\dots+v_{m}w_{m}$$

We also define

$$SF(n+1,k,-1)=\{x\in\bfR{{n+2}}_{k+1}: \<x,x\>=-1\} $$

$$ SF(n+1,k,1)=\{x\in\bfR{{n+2}}_k: \<x,x\>=1\}\com{and}\quad SF(n+1,k,0)=\bfR{{n+1}}_k$$
%and
%$$SF(n+1,k,0)=\bfR{{n+1}}_{k} $$

It is not difficult to check that $S(n+1,k,a)$ with the metric induced by $\<\, ,\, \>$ is either a Riemannian or semi-Riemannian metric. Notice that $SF(n+1,0,1)$ is the $n+1$-dimensional sphere, $SF(n+1,0,-1)$ is the $(n+1)$-dimensional hyperbolic space, $SF(n+1,0,0)$ is the $(n+1)$-dimensional Euclidean space, $SF(n+1,1,0)$ is the $(n+1)$-dimensional Minkowsky space, $SF(n+1,1,1)$ is the $(n+1)$-dimensional de-Sitter space and $SF(n+1,1,-1)$ is the $(n+1)$-dimensional anti-de Sitter space.

For notation sake, we will be denoting the metric on $SF(n+1,k,a)$ just as $\< \, ,\, \>$. The index $k$ will be understood by the ambient space. For any $x\in SF(n+1,k,a)$ we have that the tangent space at $x$ is given by,

$$ T_xSF(n+1,k,a)=\{v\in\bfR{{n+2}}: \<v,x\>=0\}\com{when $a=\pm1$ }$$

and,

$$ T_xSF(n+1,k,a)=\bfR{{n+1}} \com{when $a=0$} $$

\begin{rem} We will be referring to $SF(n+1,k,a)$ as an $(n+1)-$dimensional  space forms. We define

$$ L(n+2,k,a)=\begin{cases} \bfR{{n+2}}_{k} & \text{ if $a=1$}  \\  \bfR{{n+2}}_{k+1} & \text{ if $a=-1$}  \end{cases}$$

Notice that $\sf$ inherits the semi-riemannian metric from $\lf$
\end{rem}

\section{Hypersurfaces of space forms with two principal curvatures}

Let us assume that $M\subset SF(n+1,k,a)$ is a hypersurface, that is, $M$ is the image of an immersion $M_0\longrightarrow SF(n+1,k,a)$ where $M_0$ is an $n$-dimensional manifold.  We will assume that $M_0$ is connected. Usually we will not make reference to the manifold $M_0$, for example, the statement ``$M$ is connected'' must be interpreted as ``$M_0$ is connected''. 

\subsection{The Gauss map} A Gauss map on a hypersurface $M\subset \sf$ is a function $\nu:M\to \bfR{m}$, ($m=n+2$ when $a=\pm1$ and $m=n+1$ when $a=0$) such that $|\<\nu(x),\nu(x)\>|=1$,  $\nu(x)$ is perpendicular to $T_xM$ and $\nu(x)\in T_x\sf$ for all $x\in M$. Notice that the condition $\nu(x)\in T_x\sf$  reduces to $\<\nu(x),x\>=0$ when $a=\pm1$. A Gauss map essentially keeps track of the tangent space $T_xM$, because we have

$$T_xM=\{v\in T_x\sf: \<x,\nu(x)\>=0\}=\hbox{orthogonal complement of $\nu(x)$ on $T_x\sf$}$$

When the ambient space is Riemannian, it is always possible to locally define a Gauss map and, when the ambient space is not Riemannian we may not be able to, even locally, define a Gauss map. In this paper, we will be considering only hypersurfaces where we can globally define a Gauss map.  This condition on the existence of the Gauss map implies that the metric induced on $M$ by  $SF(n+1,k,a)$ is either Riemannian or  semi-Riemannian, that is, it implies that for any $x\in M$, there is not vector $v\in T_xM$ such that $\<v,w\>=0$ for all $w\in T_xM$.

\begin{rem}
One of the differences between hypersurfaces of Riemannian manifolds and semi-Riemannian maifolds is that in the semi-Riemannian case the existence of a Gauss map (even locally) is not guaranteed. The fact that the metric defined on the hypersurface $M\subset \sf$ is semi-Riemannian, allows us to talk about the tangential component on $T_xM$ of a vector in the vector space $T_x\sf$. 
\end{rem}

We will consider the following connections: $\bar{\nabla}$ will denote the Levi-Civita connection on $\bfR{{n+2}}$ (or on $\bfR{{n+1}}$ when $a=0$). Recall that for a vector field $X$ defined on $\bfR{m}_k$ and a vector $v$, we have

$$\bar{\nabla}_v X (x)=\frac{d X(\alpha(t))}{dt}\Big|_{t=0}$$

where $\alpha(t)$ is any curve such that $\alpha(0)=x$ and $\alpha^\prime(0)=v$.  We will also denote by $\nabla$ the Levi-Civita connection on $M$ induced by the ambient space $SF(n+1,k,a)$. In this case, if $X$ is a tangent vector field on $M$, and $v\in T_xM$, then 

$$\nabla_vX(x)=\hbox{tangential component of $\bar{\nabla}_vX(x)$ on $T_xM$}$$

\subsection{The shape operator} The shape operator of a hypersurface $M$  measures how the tangent spaces change. Notice that to measure how  the tangent spaces change  is equivalent to  measure how the Gauss map changes. For a given $x\in M$ all these changes are saved on a  linear transformation from $T_xM$ to $T_xM$ known as the shape operator which is denoted by $A$  and  defined as

$$A_x(v)=-\bar{\nabla}_v\nu\,  (x)\com{for any $v\in T_xM$ }$$

For any hypersurface $M\subset \sf$ we have that $\<A(v),w\>=\<v,A(w)\>$. The eigenvalues of the shape operator are called {\it principal curvatures}. When the metric on $M$ is Riemmanian we have that the  shape operator can be diagonalized, therefore in this case, we know that, counting multiplicities, there are $n$ principal curvatures. We cannot expect the same when the metric on $M$ is semi-Riemannian.  We will be studying hypersurfaces with two principal curvatures and since hypersurfaces whose induced metric is not riemannian may have a no-diagonalizable shape operator, we need the following definition in order to be precise above the set of hypersurfaces that we will be considering.

 \begin{mydef} \label{two principal curvatures general} We will say that a connected hypersurface $M\subset SF(n+1,k,a)$ has exactly two principal curvatures if
 \begin{itemize}
 \item
 it is possible to define a Gauss map on $M$.
 \item
 there exist two smooth functions  $\lambda,\mu:M\to {\bf R}$ such that $\lambda(x)\ne\mu(x)$ for all $x\in M$ and,
 \item
 there exists a positive integer $l\le n-1$ such that for every $x_0\in M$, it is possible to find a locally defined orthonormal frame $\{e_1,\dots,e_n\} $ near $x_0$, such that $ A(e_i)=\lambda e_i$ for $i=1,\dots,l$
and $A(e_j)=\mu e_j$ for $j=l+1,\dots,n$.
\item When $l=n-1$ we will assume that the vector field $e_n$ is globally defined and we will denote by $\Gamma(x)=\{v\in T_x M: A(v)=\lambda v\}$ the distribution associated with the eigenvalue $\lambda$.
\end{itemize}

For these hypersurfaces, we will define $b=\<\nu,\nu\>$ and, in the case that  $l=n-1$, that is, in the case that the multiplicity of $\mu$ is 1, we will define $d=\<e_n,e_n\>$. Recall that $b$ and $d$ are either $1$ or $-1$.

\end{mydef}

Using the same arguments as in \cite{P}, it can be shown that if the multiplicities of each principal curvature are both greater than one, that is, if $2 \le l\le n-2$ in Definition \ref{two principal curvatures general}, then the principal curvatures are constant, and therefore, the manifold can be easily described. For this reason we will assume that $M$ has one principal curvature with multiplicity $(n-1)$ and the other with multiplicity $1$; that is, we will assume that $l=n-1$. For a general hypersurface with a Gauss map defined on it, the mean curvature is given by 

$$H=\frac{1}{n} (\<A(w_1),w_1\>\<w_1,w_1\>+\dots+\<A(w_n),w_n\>\<w_n,w_n\>)$$

where $\{w_1,\dots,w_n\}$ is an orthonormal basis in the sense that $|\<w_i,w_j\>|=\delta_{i,j}$. As we mentioned in the abstract, we will be studying hypersurfaces with constant mean curvature. In our case, under the assumption that $M$ has two principal curvatures, one of them with multiplicity one, this condition reduces to

\begin{eqnarray}\label{Constant mth curvature equation}
(n-1) \lambda+\mu = nH \com{where $H$ is constant}
\end{eqnarray}

%\vskip.8cm

%\centerline{\bf Throughout all the sections in these notes, except the last one on Isoparametric}
%\centerline{\bf  hypersuraces, we will be assuming that  $M\subset \sf$ is a hypersurface }
%\centerline{\bf  with cmc and two principal curvatures, one of them with multiplicity one}

Throughout the rest of this paper we will be assuming that  $M\subset \sf$ is a hypersurface 
 with cmc and two non constant principal curvatures, one of them with multiplicity one.
% except the last one on Isoparametric hypersuraces,

\section{Structure of hypersurfaces with two principal curvature, one of them with multiplicity one}

Let us assume that $M\subset \sf$ is a connected hypersurface with two principal curvatures. We will also assume that one of the principal curvature has multiplicity one and that $M$ has constant mean curvature $H$. The functions $\lambda$ and $\mu$, the vector fields $e_n$ and $\nu$ and the distribution $\Gamma$ are defined as before, see Definition \ref{two principal curvatures general}. We will be assuming that $\lambda-\mu>0$ everywhere. We can assume this without loss of generality because we can change the Gauss map $\nu$ by $-\nu$ if necessary. In this section we will mention a list of theorems that will guide us to a better understanding of these hypersurfaces. The proof of the theorems will be developed throughout the paper. The goal of this section is to show the ``whole picture'' of the important properties of these
hyersurfaces.

The following definition will be useful

\begin{mydef} We define $w:M\to {\bf R}$ by   $w=\left(\frac{\lambda-\mu}{n}\right)^{-\frac{1}{n}}$. We also define the vector field   $\eta$ as

$$\eta(p) = - e_n(w)\,  e_n+ d b \lambda\,  w \nu -da w\, p$$

where $e_n(w)$ is the derivative of the function $w$ in the direction of the vector $e_n$.

\end{mydef}

In the same that way every non vertical line in $\bfR{2}$ has a slope, we have that every hypersurface $M\subset \sf$ has a constant $C$ associated with it. The following theorem allows us to define this constant $C$.

\begin{thm} The function $e_n(w)^2+dw^2\left(a+b\lambda^2  \right)$ is constant, that is, for some constant $C$ we have,

$$C=e_n(w)^2+dw^2\left(a+b\lambda^2  \right)$$ 

This number $C$ will be denoted as the constant associated to $M$.

\end{thm}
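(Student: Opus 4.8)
The plan is to prove that the differential $dC$ vanishes identically; since $M$ is connected this forces $C$ to be constant. I will test $C$ against the adapted frame $\{e_1,\dots,e_n\}$, treating the $\lambda$-eigendirections $e_1,\dots,e_{n-1}$ and the $\mu$-direction $e_n$ separately. First I would record the identity $C=d\,\langle\eta,\eta\rangle$. Indeed $e_n$, $\nu$ and $p$ are mutually orthogonal with $\langle e_n,e_n\rangle=d$, $\langle\nu,\nu\rangle=b$ and $\langle p,p\rangle=a$, so expanding $\langle\eta,\eta\rangle$ for $\eta=-e_n(w)\,e_n+db\lambda w\,\nu-daw\,p$ and using $a^2=b^2=d^2=1$ gives $\langle\eta,\eta\rangle=d\,e_n(w)^2+w^2(a+b\lambda^2)=dC$. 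Hence $e_k(C)=2d\,\langle\bar\nabla_{e_k}\eta,\eta\rangle$, and everything reduces to understanding the ambient derivatives $\bar\nabla_{e_k}\eta$, which I will compute from $\bar\nabla_X Y=\nabla_X Y+b\langle AX,Y\rangle\nu-a\langle X,Y\rangle p$, together with $\bar\nabla_X\nu=-AX$ and $\bar\nabla_X p=X$.

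Next I would extract the first-order information from the Codazzi equation, valid because the ambient space form has constant curvature and hence $\nabla A$ is symmetric. Feeding the pairs $(e_i,e_j)$ with $i\neq j\le n-1$ into $(\nabla_{e_i}A)e_j=(\nabla_{e_j}A)e_i$ yields $e_i(\lambda)=0$ for $i\le n-1$ (this is where multiplicity at least two is used); consequently $e_i(\mu)=e_i(w)=0$, since $\mu=nH-(n-1)\lambda$ and $w$ depends only on $\lambda-\mu$. The pair $(e_i,e_n)$ then gives the structural relations $\nabla_{e_n}e_n=0$ and $\nabla_{e_i}e_n=\psi\,e_i$ with $\psi:=e_n(w)/w$, where I use the chain-rule identity $w\,e_n(\lambda)=-(\lambda-\mu)\,e_n(w)$ coming from $w=\left((\lambda-\mu)/n\right)^{-1/n}$. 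A short computation with the Gauss and Weingarten formulas then gives $\bar\nabla_{e_i}\eta=-(C/w)\,e_i$. Since $e_i\perp\eta$, this already forces $e_i(C)=2d\,\langle\bar\nabla_{e_i}\eta,\eta\rangle=0$ for every $i\le n-1$, irrespective of the value of $C$ (note that $\eta$ is \emph{not} ambient-parallel unless $C=0$, so one really does need the orthogonality $e_i\perp\eta$ rather than vanishing of $\bar\nabla_{e_i}\eta$).

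The decisive step is $e_n(C)=0$, and here the second-order information enters through the Gauss equation. Using $\nabla_{e_n}e_n=0$ and $\nabla_{e_i}e_n=\psi e_i$ I would compute the intrinsic curvature term as $\langle R(e_i,e_n)e_n,e_i\rangle=-\epsilon_i\,(e_n(\psi)+\psi^2)$, where $\epsilon_i=\langle e_i,e_i\rangle$. On the other hand the Gauss equation for $M\subset SF(n+1,k,a)$ evaluates the same quantity as $\epsilon_i\,d\,(a+b\lambda\mu)$. Equating these and unwinding $\psi=e_n(w)/w$ produces the key scalar identity along $e_n$,
\[
e_n\!\left(e_n(w)\right)=-d\,w\,(a+b\lambda\mu).
\]
Differentiating $C=e_n(w)^2+dw^2(a+b\lambda^2)$ along $e_n$ and substituting this identity together with $w\,e_n(\lambda)=-(\lambda-\mu)e_n(w)$ makes every term cancel, giving $e_n(C)=0$; equivalently one verifies $\bar\nabla_{e_n}\eta=0$ outright. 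Combined with the previous paragraph this yields $e_k(C)=0$ for all $k$, so $C$ is constant on the connected $M$.

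I expect the main obstacle to be this Gauss-equation step: computing $\langle R(e_i,e_n)e_n,e_i\rangle$ from the connection coefficients and matching signs with the semi-Riemannian Gauss equation, since the factors $a$, $b$, $d=\langle e_n,e_n\rangle$ and $\epsilon_i$ must be tracked consistently across all six ambient geometries. A secondary point to isolate is the Codazzi derivation of $e_i(\lambda)=0$, which as stated requires the $\lambda$-eigenspace to have dimension at least two; the borderline case $n=2$, where both principal curvatures have multiplicity one, would need a separate argument.
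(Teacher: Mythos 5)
The proposal is correct and follows essentially the same route as the paper: Codazzi equations give the first-order structure ($e_i(\lambda)=0$, $\nabla_{e_n}e_n=0$, $\nabla_{e_i}e_n=\frac{e_n(w)}{w}e_i$), the Gauss equation gives the second-order identity $e_n(e_n(w))=-dw(a+b\lambda\mu)$ (the paper's Theorem \ref{equations} combined with Theorem \ref{local relations} parts (i) and (iii)), and differentiating $C$ along the frame then cancels exactly as in the paper's proof of part (iv). Your repackaging of the $\Gamma$-direction derivatives through $\langle\eta,\eta\rangle=dC$ and $\bar\nabla_{e_i}\eta=-(C/w)e_i$ is just the paper's parts (viii) and (xiii) used in reverse order, resting on the same underlying facts, so it is not a genuinely different argument.
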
 

%Beginning the case a\ne 0 and C\ne0

When  $C\ne0$, we can naturally define a planar curve called the profile curve of the hypersurface. Regardless if $C=0$ or not, we can define a two-dimensional plane associated to $M$. 

\begin{thm}\label{the plane} The vector field $\eta$ satisfies $\<\eta,\eta\>=C d$.  Moreover if  $a\ne0$, then
the two dimensional plane  given by

$$\Pi=\Pi_p=\{v=c_1 p+c_2 e_n+c_3\nu:\, c_i\in \mathbb{R}\quad \hbox{and}\quad  \<v,\eta\>=0\}$$

is independent of the point $p$. We also have that, if additionally $M$ is complete and $C\ne0$, then, the orthogonal projection on $\Pi$ of any integral curve of the vector field $e_n$ lies on a single curve  of $\Pi$ that we will call the profile curve of $M$. 
\end{thm}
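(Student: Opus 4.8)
The plan is to treat the three assertions in turn, recording first the inner products of $\eta$ with the frame used throughout. Since $e_n\in T_pM\subset T_p\sf$ is orthogonal to both $\nu$ and $p$, while $\<e_n,e_n\>=d$, $\<\nu,\nu\>=b$, $\<\nu,p\>=0$ and $\<p,p\>=a$ (when $a=\pm1$), substituting into the definition of $\eta$ gives
\[
\<p,\eta\>=-da^2w=-dw,\qquad \<e_n,\eta\>=-d\,e_n(w),\qquad \<\nu,\eta\>=db^2\lambda w=d\lambda w .
\]
Feeding these back into $\<\eta,\eta\>$ (using $a^2=b^2=d^2=1$, with the $p$-term simply absent when $a=0$) collapses the expression to $d\big(e_n(w)^2+dw^2(a+b\lambda^2)\big)=Cd$, where $C$ is the constant furnished by the previous theorem; this is the first claim.

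For the statement that $\Pi_p$ is independent of $p$ I would change viewpoint and describe $\Pi_p$ through its orthogonal complement in $\bfR{n+2}$. Because $e_1,\dots,e_{n-1}$ spanning $\Gamma_p$ are orthogonal to $p,e_n,\nu$ and the induced metric is nondegenerate, one has $\Gamma_p^{\perp}=\hbox{span}\{p,e_n,\nu\}$, so the plane in the statement is exactly $\Pi_p=\{v:\,v\perp\Gamma_p,\ \<v,\eta\>=0\}=N_p^{\perp}$, where $N_p:=\Gamma_p\oplus\R\eta$. Note $a\neq0$ forces $\eta\neq0$ (its $p$-coefficient $-daw$ is nonzero), so $N_p$ is genuinely $n$-dimensional and $\Pi_p$ is a plane. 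It therefore suffices to prove that the $n$-plane distribution $N$ is \emph{parallel} for $\bar\nabla$: if $\bar\nabla_u X\in N_p$ for every $u\in T_pM$ and every section $X$ of $N$, then along any curve the spanning vectors satisfy a linear system, their span is constant, and connectedness of $M$ makes $N$, hence $\Pi=N^{\perp}$, a fixed subspace.

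Verifying that $N$ is parallel is the heart of the argument, and $\bar\nabla_{e_n}\eta$ is the step I expect to be the main obstacle. Using the structure equations produced by the analysis behind the previous theorem (as in \cite{P}), namely $e_i(\lambda)=e_i(w)=0$, $\nabla_{e_i}e_n=\tfrac{e_n(w)}{w}\,e_i$, $\nabla_{e_n}e_n=0$, $\nabla_{e_n}e_i\in\Gamma$, together with $\bar\nabla_v p=v$ and $\bar\nabla_v\nu=-A(v)$, one checks that $\bar\nabla_{e_j}e_i$ and $\bar\nabla_{e_n}e_i$ lie in $N$ (the former precisely because the umbilicity coefficient $e_n(w)/w$ matches the ratios among the $e_n$-, $\nu$- and $p$-parts of $\eta$, which is exactly why $w$ carries the exponent $-1/n$). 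For $\bar\nabla_{e_i}\eta$ the only dangerous term is $-e_i(e_n(w))\,e_n$, and it vanishes because differentiating $C=e_n(w)^2+dw^2(a+b\lambda^2)$ along $\Gamma$ forces $e_i(e_n(w))=0$; hence $\bar\nabla_{e_i}\eta\in\Gamma\subset N$. Finally, expanding $\bar\nabla_{e_n}\eta$ and collecting terms, the $p$-component cancels automatically, the $\nu$-component equals $db\big(e_n(\lambda)w+(\lambda-\mu)e_n(w)\big)$, which is zero by the definition of $w$, and the $e_n$-component equals $-\big(e_n(e_n(w))+dw(a+b\lambda\mu)\big)$, which vanishes once one substitutes the second-order relation $e_n(e_n(w))=-dw(a+b\lambda\mu)$ obtained by differentiating $C=\hbox{const}$ along $e_n$. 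Thus $\bar\nabla_{e_n}\eta=0$ and $N$ is parallel.

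For the profile curve I would fix an integral curve $\gamma$ of $e_n$; it is a geodesic of $M$ since $\nabla_{e_n}e_n=0$, hence complete when $M$ is. Computing $\bar\nabla_{e_n}$ on each of $p,e_n,\nu$ shows $W=\hbox{span}\{p,e_n,\nu\}$ is parallel along $\gamma$, so $W$ is a fixed $3$-plane $W_\gamma\supset\Pi$; and $\bar\nabla_{e_n}\eta=0$ makes $\eta$ a fixed vector $\eta_\gamma\in N$ along $\gamma$ with $\<\eta_\gamma,\eta_\gamma\>=Cd\neq0$. Since $\<p,\eta\>=-dw$, the orthogonal projection of $\gamma$ onto $\Pi$ is the planar curve $\gamma^{\Pi}=p+\tfrac{w}{C}\,\eta_\gamma$, whose velocity is $e_n^{\Pi}$ and whose Gram data, for instance $\<\gamma^{\Pi},\gamma^{\Pi}\>=a-\tfrac{d}{C}w^2$ and $\<(\gamma^{\Pi})',(\gamma^{\Pi})'\>=d-\tfrac1C e_n(w)^2$, depend only on $w,\lambda$ and the constants $H,C$. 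As these functions solve the same profile equation for every integral curve, and the linear isometries of $L(n+2,k,a)$ fixing $\Pi$ pointwise and acting as $O(N)$ on $N=\Pi^{\perp}$ preserve $M$, carry integral curves of $e_n$ to integral curves of $e_n$, and act trivially on $\Pi$, all these projections trace one and the same curve of $\Pi$. Turning the last sentence into a rigorous identification, that is, passing from congruent Gram data to a single curve via the symmetry of $M$, is the delicate point of this final step.
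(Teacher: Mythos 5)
Your computation of $\<\eta,\eta\>=Cd$ is the same direct computation the paper performs (it is parts (iv) and (xiii) of Theorem \ref{local relations}), so there is nothing to discuss there.

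For the independence of $\Pi$ your argument is correct but takes a genuinely different route. The paper (Lemma \ref{the plane--lemma}) works inside the plane itself: it writes $\Pi=\hbox{span}\{X,Y\}$ with $X=\nu+\lambda x$, $Y=w\,e_n-e_n(w)\,x$, shows that $X,Y$ are constant along the leaves of $\Gamma$ (Theorem \ref{local relations}(vi)) and satisfy the linear system $\bar{\nabla}_{e_n}X=\frac{\lambda-\mu}{w}Y$, $\bar{\nabla}_{e_n}Y=db\,w\mu X$, so their span is fixed. You instead show that the orthogonal complement $N=\Gamma\oplus\mathbb{R}\eta$ is a $\bar{\nabla}$-parallel distribution and take $\Pi=N^{\perp}$; your key identity, that the component of $\bar{\nabla}_{e_j}e_i$ along $\hbox{span}\{e_n,\nu,x\}$ equals $\frac{d\<e_i,e_j\>}{w}\,\eta$, is correct and is a nice conceptual explanation of why $\eta$ is the right vector field. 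The paper's version has the advantage of producing the explicit frame $\{X,Y\}$ of $\Pi$ used later for the immersions; yours is more intrinsic. One caveat: you derive $e_i(e_n(w))=0$ and $e_ne_n(w)=-dw(a+b\lambda\mu)$ by differentiating the first integral $C$. Differentiation only yields $2e_n(w)\,e_i(e_n(w))=0$ and $2e_n(w)\bigl(e_ne_n(w)+dw(a+b\lambda\mu)\bigr)=0$, so you obtain these identities only where $e_n(w)\ne0$; on an open set where $e_n(w)\equiv0$ the second identity is not a consequence of the constancy of $C$ (there it amounts to $a+b\lambda\mu=0$) and must be taken from the Codazzi--Gauss analysis (the fourth identity of Theorem \ref{equations}, equivalently Theorem \ref{local relations}(iii), which hold everywhere). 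This is also the paper's logical direction: it deduces the constancy of $C$ from those identities, not the other way around. Easily repaired, but as written your proof of $\bar{\nabla}_{e_n}\eta=0$, hence of the parallelism of $N$, is complete only off the critical set of $w$.

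The genuine gap is in the profile-curve statement. Your formula for the projection, $\hbox{proj}_\Pi(p)=p+\frac{w}{C}\eta$, is exactly the paper's vector field $\zeta$ (Definition \ref{def of zeta}), so you have the right object in hand. But your mechanism for concluding that the projections of all integral curves trace one and the same curve --- congruent Gram data together with the claim that the linear isometries of $\lf$ fixing $\Pi$ pointwise ``preserve $M$'' --- does not work as stated: equal Gram data only gives \emph{congruence} of the projected curves, not equality, and the invariance of $M$ under the rotations fixing $\Pi$ is precisely the rotational structure that is established only later by the classification theorem (Theorem \ref{the immersions 1 intro}); invoking it here is circular, as you yourself sense when you call this ``the delicate point.'' The repair is short and uses only what you have already proved: since $\bar{\nabla}_v\eta=-\frac{C}{w}v$ and $v(w)=0$ for $v\in\Gamma$, one gets $\bar{\nabla}_v\zeta=v+\frac{v(w)}{C}\eta+\frac{w}{C}\bar{\nabla}_v\eta=0$, i.e.\ $\zeta$ is constant on each leaf of the integrable distribution $\Gamma$ (this is the paper's Lemma \ref{zeta}). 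Since $M$ is connected and complete, $M$ is the union of the leaves $M_t$ through the points $\gamma(t)$ of one fixed integral curve; hence any point of any integral curve lies on some leaf $M_{t_1}$ and its projection equals $\zeta(\gamma(t_1))$. Thus all projections lie on the single planar curve $t\mapsto\zeta(\gamma(t))=\gamma(t)+\frac{g(t)}{C}\rho_0$, with no appeal to any symmetry of $M$.
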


The theorem above places us very close to a full classification. We would like to point out that this definition of profile curve is slightly different to the one defined in previous papers, for example the one defined in \cite{B-L}.

Here is one way to describe every possible $M$ when  $C\ne0$.

\begin{thm}\label{the immersions 1 intro}
Let $\sf$ be a space form with sectional curvature $a=1$, $a=-1$ or $a=0$ and let $M\subset \sf$ be a complete and connected hypersurface with constant mean curvature, two non-constant principal curvatures  and such that the constant $C$ associated to $M$ is not zero. If $x_0\in M$, then, $M$ agrees with the immersion 

$\phi:S\times \mathbb{R} \longrightarrow \sf$ given by 

$$\phi(y,t)= \frac{g(t)}{\sqrt{|C|}}\, y+ \gamma(t)+\frac{g(t)}{C}\, \rho_0 $$

where $\gamma(t)$ is an integral curve of the vector field $e_n$ that contains $x_0$, $\rho_0=\eta(x_0)$, $g(t)=w(\gamma(t))$ and
$$S=\{y\in \Gamma(x_0)+Span\{\eta(x_0)\}: \<y,y\> = d\frac{|C|}{C}\}.$$

\end{thm}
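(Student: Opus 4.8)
The plan is to realize $M$ as a family of totally umbilic \emph{leaves} (the integral manifolds of $\Gamma$) swept along the profile direction $e_n$, and then to recognize the stated formula for $\phi$ as the explicit parametrization of this sweep. First I would analyze the distribution $\Gamma$ attached to the eigenvalue $\lambda$ of multiplicity $n-1$. The Codazzi equation forces $\Gamma$ to be integrable with totally umbilic leaves in \sf, hence totally umbilic in the flat space \lf (or $\bfR{n+1}$ when $a=0$), because \sf is itself umbilic in the ambient flat space. Consequently each leaf lies in an affine $n$-dimensional subspace and is a pseudo-sphere. Fixing $x_0$, I would identify the leaf $L_0$ through $x_0$ with the model sphere $S\subset\Gamma(x_0)\oplus\mathrm{Span}\{\eta(x_0)\}$: its tangent space at $x_0$ is exactly $\Gamma(x_0)$, while the remaining normal direction inside the affine hull is pinned down by $\rho_0=\eta(x_0)$, whose squared length $\langle\eta,\eta\rangle=Cd$ (Theorem \ref{the plane}) is precisely what supplies the radius $g(0)/\sqrt{|C|}$ and the center $\gamma(0)+g(0)\rho_0/C$.

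Next I would verify that the profile curve is a coordinate line of $\phi$. Since $\langle\rho_0,\rho_0\rangle=Cd$, one of the two points $y_*=\pm\rho_0/\sqrt{|C|}$ lies in $S$, and a one-line computation gives $\phi(y_*,t)=\gamma(t)$ for the sign matching $\mathrm{sign}(C)$; in particular $\phi(y_*,0)=x_0$. Thus $\phi(\cdot,t)$ should parametrize the moving leaf $L_t$ through $\gamma(t)$, with $\gamma$ itself recovered along one meridian. To confirm this I would set up the evolution of the adapted frame $\{\gamma,e_n,\nu\}$ along an integral curve $\gamma(t)$ of $e_n$, using $\gamma'=e_n$, $\bar{\nabla}_{e_n}\nu=-A(e_n)=-\mu\,e_n$, and $\bar{\nabla}_{e_n}e_n=\mu\,d\,b\,\nu-d\,a\,\gamma$ (the last term absent when $a=0$). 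The nontrivial input here is that $\bar{\nabla}_{e_n}e_n$ has no $\Gamma$-component, i.e.\ the meridians are geodesics of $M$; this I would extract from the Codazzi equations as part of the structural analysis. Writing $\lambda=H+w^{-n}$ and $\mu=H-(n-1)w^{-n}$ (which follow at once from the cmc relation \eqref{Constant mth curvature equation} together with $w=((\lambda-\mu)/n)^{-1/n}$), and using $e_n(w)^2=C-dw^2(a+b\lambda^2)$, this becomes a closed autonomous system in $g(t)=w(\gamma(t))$.

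I would then differentiate the stated $\phi$ in $t$ and in the leaf directions: the check is that $\partial_t\phi$ reproduces $e_n$ along each meridian while the $\partial_y\phi$ span the $\lambda$-eigenspace $\Gamma$, so that $\phi$ is an immersion whose shape operator has the prescribed eigenvalues $\lambda,\mu$. In parallel I would verify $\langle\phi,\phi\rangle=a$ for $a=\pm1$; expanding the square produces the cross terms $\langle y,\gamma\rangle$, $\langle\gamma,\rho_0\rangle$ and $\langle y,\rho_0\rangle$, whose evolution is governed by the frame ODEs, and these collapse to the constant $a$ exactly because $C$ is constant and $\langle\eta,\eta\rangle=Cd$. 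Finally I would run a rigidity argument: $\phi$ and $M$ share the point $x_0$, the tangent plane, the Gauss map and the second fundamental form there, and both obey the same frame ODEs, so they coincide on a neighborhood; completeness then forces $t$ to range over all of $\R$ and $y$ over all of $S$, and connectedness of $M$ yields $M=\phi(S\times\R)$.

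The step I expect to be the main obstacle is the third one: pinning the \emph{center} and \emph{radius} of the moving leaf to the precise coefficients $\gamma(t)+g(t)\rho_0/C$ and $g(t)/\sqrt{|C|}$, and carrying this through uniformly across the three ambient curvatures $a\in\{1,-1,0\}$ and the two signs of $C$. The definition of $S$ through $\langle y,y\rangle=d|C|/C$ and the placement of $\rho_0$ both switch roles with $\mathrm{sign}(C)$, and when $a=0$ the position term drops out of $\eta$ and of $\bar{\nabla}_{e_n}e_n$, so the bookkeeping in the verification of $\langle\phi,\phi\rangle=a$ and of the eigenvalue structure must be organized so that a single computation covers all cases rather than splitting into many.
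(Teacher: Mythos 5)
Your overall strategy is viable but it is \emph{not} the paper's route, and as written it contains one genuine gap. First, the comparison: your plan classifies the leaves of $\Gamma$ as totally umbilic submanifolds of the flat ambient space and then sweeps the resulting pseudo-spheres along the meridian; this is a legitimately different mechanism from the paper, which never invokes the umbilic classification. The paper instead produces two first integrals: $w$ and the vector field $\zeta=x+\tfrac{w}{C}\eta$ are constant on each leaf (Theorem \ref{equations}, Lemma \ref{zeta}), and $\eta$ is parallel along meridians (Theorem \ref{local relations}, part (vii)), so that for an \emph{arbitrary} point $x_1\in M$ one can solve algebraically: setting $y=-\tfrac{\sqrt{|C|}}{C}\,\eta(x_1)$ one checks $\langle y,y\rangle=d\tfrac{|C|}{C}$, $y\perp\Pi$, and $x_1=\phi(y,t_1)$. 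In other words, the paper exhibits the parameters $(y,t_1)$ of every point of $M$ directly and never needs a construction-plus-uniqueness argument. (Your picture is consistent with this: the leaf's mean curvature vector in the flat space is $h=\tfrac{d}{w}\eta$, so $\langle h,h\rangle=\tfrac{dC}{w^2}$, and the center of the leaf is $x+h/\langle h,h\rangle=\zeta$; note that $C\neq0$ is exactly what makes $h$ non-null and excludes the degenerate umbilic case, which is the $C=0$ regime of Theorem \ref{the immersions c=0}. Also, your verification that $\phi$ is a cmc immersion with eigenvalues $\kappa_1,\kappa_2$ duplicates Theorem \ref{cneq0aneq0}, which the paper keeps separate from the present statement.)

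The gap is your final rigidity step. Agreement of the point, tangent plane, Gauss map and second fundamental form \emph{at the single point} $x_0$ never implies two hypersurfaces coincide on a neighborhood, and the clause ``both obey the same frame ODEs'' only yields coincidence of the meridian through $x_0$ --- which is shared by construction anyway, since $\phi$ is defined using $\gamma$ and $g=w\circ\gamma$ themselves. What your argument actually requires is leaf-by-leaf coincidence: the leaf of $M$ through $\gamma(t_1)$ must be a piece of the slice $\phi(S\times\{t_1\})$, i.e.\ its center must equal $\gamma(t_1)+\tfrac{g(t_1)}{C}\rho_0$ with the \emph{same} $\rho_0$ for every $t_1$, and its affine hull must be the translate of the \emph{fixed} linear space $\Gamma(x_0)+\mathrm{Span}\{\rho_0\}$. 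These two facts are precisely $\bar{\nabla}_{e_n}\eta=0$ along meridians and the constancy on $M$ of $\Gamma\oplus\mathrm{Span}\{\eta\}$ (equivalently of the plane $\Pi$, Lemma \ref{the plane--lemma}); you never establish either, and without them the centers and affine hulls of the moving leaves could drift as $t$ varies, so the sweep need not match $\phi$. Separately, the passage ``completeness then forces $t$ to range over all of $\mathbb{R}$ and $y$ over all of $S$, and connectedness yields $M=\phi(S\times\mathbb{R})$'' asserts rather than proves the two remaining points: that every point of $M$ lies on a leaf through some $\gamma(t_1)$ (an open-and-closed argument in $M$), and that each complete leaf is the entire pseudo-sphere rather than a proper open piece. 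All of this is repairable --- indeed your steps on the umbilic structure supply most of the ingredients --- but as stated the rigidity paragraph is where the proof would fail under scrutiny.
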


The case $C=0$ is a little different.  Regardless of the values of $a,b,d$ and $C$, Section \ref{explicitEx} provides explicit examples of immersions in terms of the solution of an ordinary differential equation. In Section \ref{section the immersions} we show that every complete cmc hypersurface  in $\sf$ with two principal curvatures, one of them with multiplicity one, is described by one of the explicit examples presented in Section  \ref{explicitEx}. 

%We also have that if additionally $M$ is complete, then the curve obtained by projecting on the plane $l$ an integral curve $\gamma_p$ defined in the whole set of real numbers is %independent of $p$. This curve in the plane $\Pi$ will be denoted as the profile curve.

%Fin the case a=0 and C\ne 0

In section \ref{mods}  we will be providing a precise relation between the values $H$ and $C$ for which there exists a complete hypersurface with cmc and two non constant principal curvatures. The completeness of $M$ turns out to be equivalent to the existence of a positive solution $g$ of the ordinary differential equation,

$$(g^\prime)^2+d\, g^2 \, (a+b(H+g^{-n})^2)=C $$

defined in the whole real line.  The solution of this ODE will be studied in section \ref{solvingode}.

%We will see that some of the complete immersions may not be properly immersed and on the other hand they may be embedded. by %explicitly solving the ode that defines $\gamma(t)$, we will be classifying values of $C$ and $H$ that provides embedded examples.

\section{Basic equations}\label{basic equations}

In this section we compute the Curvature tensor of $M$ and, we use the Codazzi equations to prove that the distribution $\Gamma$ is integrable and prove several more equations that will help us to understand the way  $M$ is immersed in $\sf$. To simplify writing let us define 

$$N=n+2\com{if} a=\pm1,\com{and} N=n+1\com{if} a=0$$

For any fixed vector $v\in \bfR{N}$, we define, $l_v:M\to {\bf R}$, $f_v:M\to {\bf R}$ and $V:M\to \bfR{N}$ by

$$l_v(x)=\<x,v\>\quad f_v(x)=\<\nu(x),v\> \com{and} V(x)=v-a\, l_v(x) x-b\, f_v(x) \nu(x)$$

A direct computation shows that $V$ is a tangent vector field and that

$$\nabla l_v=V\com{and} \nabla f_v=-A(V)$$

and

$$\nabla_wV=-a\, l_v(x) w + b\, f_v(x) A_x(w)\com{for any $w\in T_xM$}$$

Notice that if $v\in T_{x_0}M$ then $l_v(x_0)=0$, $f_v(x_0)=0$ and therefore, $\nabla_wV(x_0)$ vanishes for any $w\in T_{x_0}M$. Let us compute the Curvature tensor on $M$. If $v_i\in T_{x_0}M\subset \bfR{N}$ we have that,

\begin{eqnarray*}
\<R(v_1,v_2)v_3,v_4\>&=&\<\nabla_{v_2}\nabla_{V_1}V_3-\nabla_{v_1}\nabla_{V_2}V_3,v_4\>\cr
&=&\<\nabla_{v_2}(-a\, l_{v_3} V_1 + b f_{v_3} \, A(V_1))-\nabla_{v_1}(-a\, l_{v_3} V_2 + b f_{v_3} \, A(V_2)),v_4\>\cr
&=&a(\<v_1,v_3\>\<v_2,v_4\>-\<v_1,v_4\>\<v_2,v_3\>)+b(\<A(v_3),v_1\>\<A(v_2),v_4\>-\<A(v_3),v_2\>\<A(v_1),v_4\>)
\end{eqnarray*}

The equation above is the Gauss equation.

\begin{rem} A similar computation shows that the Curvature tensor of $\sf$ is given by 

$$R(v_1,v_2,v_3,v_4)=a(\<v_1,v_3\>\<v_2,v_4\>-\<v_1,v_4\>\<v_2,v_3\>)$$

Therefore, $\sf$ has constant sectional curvature $a$.

\end{rem}

Recall that for any three vector fields $X$, $Y$  and $Z$ on $M$,  $DA$, the covariant derivative of the shape operator, is defined by 

$$DA(X,Y,Z)=Z \<A(X),Y\> - \<A(\nabla_ZX),Y\> - \<A(X),\nabla_ZX \>$$

Since $\sf $ has constant sectional curvature, we have the following equations which are known as  the Codazzi equations

\begin{eqnarray}
\label{Codazzi equation}
DA(X,Y,Z)=DA(Z,Y,X)
\end{eqnarray}

The next theorem is essential in the classification of hypersurfaces with two principal curvatures, its proof is similar to the one made in \cite{P} for hypersurfaces of spheres.

%Thm begins
\begin{thm}
\label{equations}

 Let $M\subset SF(n+1,k,a)$ be a  hypersurface with constant mean curvature and two principal curvatures, one of them with multiplicity 1. If $n\ge 3$ and $\mu,\lambda:M\to{\bf R}$,  $b$, $d$ and $\{e_1,\dots,e_n\} $ are like in Definition \ref{two principal curvatures general}, then,

 \begin{eqnarray*}
v(\lambda)&=&0 \com{for any } v\in \Gamma= \hbox{Span}\{e_1,\dots,e_{n-1}\}\\
\nabla_v e_n&=&\frac{e_n(\lambda)}{\mu-\lambda}\, v\com{for any } v\in \Gamma \\
\nabla_{e_n}e_n& & \com{vanishes}  \\
ad+bd\, \lambda\mu&=&e_n(\frac{e_n(\lambda)}{\lambda-\mu})-(\frac{e_n(\lambda)}{\lambda-\mu})^2
 \end{eqnarray*}

Moreover we have that,

$$ [e_i,e_j] \com{and} [e_i,e_n]\com{ are in  $\Gamma$  for any } i,j\in\{1,\dots,n-1\}$$

and

$$\lambda, \, \mu \com{and} e_n(\lambda) \com{are constant along the integral submanifolds of $\Gamma$} $$

\end{thm}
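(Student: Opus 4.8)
The plan is to exploit the Codazzi equations together with the fact that the shape operator has two eigenvalues $\lambda$ (multiplicity $n-1$) and $\mu$ (multiplicity $1$), linked by the cmc relation $(n-1)\lambda+\mu=nH$. I would set up a local orthonormal frame $\{e_1,\dots,e_n\}$ as in Definition \ref{two principal curvatures general}, so that $A(e_i)=\lambda e_i$ for $i<n$ and $A(e_n)=\mu e_n$, and write $\epsilon_i=\<e_i,e_i\>\in\{1,-1\}$. The first task is to compute $DA(e_i,e_j,e_k)$ for various index patterns by expanding the definition $DA(X,Y,Z)=Z\<A(X),Y\>-\<A(\nabla_Z X),Y\>-\<A(X),\nabla_Z Y\>$, keeping in mind that $\lambda$ is eigenvalue-constant on $\Gamma$ and using the symmetry of $A$. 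The symmetry $DA(X,Y,Z)=DA(Z,Y,X)$ then yields a system of relations among the connection coefficients $\<\nabla_{e_a}e_b,e_c\>$ and the derivatives $e_a(\lambda)$.

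The key algebraic mechanism is this: applying Codazzi to a triple containing one $e_i\in\Gamma$ and $e_n$ forces the derivative $e_i(\lambda)$ to vanish, which gives the first claim $v(\lambda)=0$ for $v\in\Gamma$. Concretely, evaluating $DA(e_i,e_i,e_n)=DA(e_n,e_i,e_i)$ produces on one side a term $e_n\<A(e_i),e_i\>$ and on the other $e_i\<A(e_n),e_i\>$ plus connection terms; since $\mu\ne\lambda$, the difference $(\lambda-\mu)$ appears as a nonzero factor multiplying $\<\nabla_{e_i}e_n,e_i\>$-type quantities, and a separate Codazzi triple isolates $e_i(\lambda)=0$. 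Once $e_i(\lambda)=0$ is established, differentiating the cmc relation shows $e_i(\mu)=0$ as well, so $\lambda,\mu$ are constant along $\Gamma$; and the formula $\nabla_v e_n=\frac{e_n(\lambda)}{\mu-\lambda}v$ for $v\in\Gamma$ comes out of the same Codazzi identity by reading off the $\Gamma$-component of $\nabla_v e_n$. The relation $\nabla_{e_n}e_n=0$ follows because $\<\nabla_{e_n}e_n,e_i\>$ is governed by $e_i(\mu)$ (which vanishes) via Codazzi, and $\<\nabla_{e_n}e_n,e_n\>=0$ automatically since $e_n$ has constant norm $d$.

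For the fourth displayed equation, the plan is to feed the structural information just obtained into the Gauss equation computed earlier in this section. The sectional curvature of the plane spanned by $e_i\in\Gamma$ and $e_n$ is, by Gauss, $a\,\epsilon_i\,d + b\,\epsilon_i\,d\,\lambda\mu$ after accounting for the metric signs, i.e. $\epsilon_i d(a+b\lambda\mu)$. On the other hand, this same sectional curvature can be computed intrinsically from the connection using $\nabla_{e_i}e_n=\frac{e_n(\lambda)}{\mu-\lambda}e_i$ and $\nabla_{e_n}e_n=0$: one evaluates $\<R(e_i,e_n)e_n,e_i\>$ directly, and the radial part produces exactly $e_n\!\left(\frac{e_n(\lambda)}{\lambda-\mu}\right)-\left(\frac{e_n(\lambda)}{\lambda-\mu}\right)^2$ (a Riccati-type expression from differentiating the coefficient along $e_n$). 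Matching the two computations of the same curvature gives the stated identity, up to a sign bookkeeping of $d$ that I would track carefully.

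The bracket and constancy statements at the end follow from integrability considerations. Since $v(\lambda)=0$ on $\Gamma$ and $\nabla_v e_n$ lies in $\Gamma$, the Lie brackets $[e_i,e_j]$ and $[e_i,e_n]$ have vanishing $e_n$-component (this is where $\nabla_{e_i}e_n\in\Gamma$ and the torsion-free property $[X,Y]=\nabla_X Y-\nabla_Y X$ enter), so they lie in $\Gamma$, making $\Gamma$ integrable. That $\lambda$, $\mu$, and $e_n(\lambda)$ are constant along the integral submanifolds of $\Gamma$ then amounts to checking that each is annihilated by every $e_i\in\Gamma$: for $\lambda$ and $\mu$ this is already shown, and for $e_n(\lambda)$ one computes $e_i(e_n(\lambda))=[e_i,e_n](\lambda)+e_n(e_i(\lambda))$, where both terms vanish because $[e_i,e_n]\in\Gamma$ annihilates $\lambda$ and $e_i(\lambda)=0$. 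The main obstacle I anticipate is the sign and metric-coefficient bookkeeping throughout: because the induced metric may be semi-Riemannian, the factors $\epsilon_i$, $b$, and $d$ must be inserted consistently in every inner product and in the Gauss equation, and the Riccati computation for the curvature along $e_n$ must be reconciled with these signs to land on the stated form $ad+bd\lambda\mu$ rather than some sign-flipped variant.
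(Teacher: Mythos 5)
Your proposal follows essentially the same route as the paper's proof: Codazzi identities applied to frame triples give $v(\lambda)=0$, the formula $\nabla_v e_n=\frac{e_n(\lambda)}{\mu-\lambda}\,v$, and $\nabla_{e_n}e_n=0$ (using the cmc relation to convert $e_i(\lambda)=0$ into $e_i(\mu)=0$); the Gauss equation evaluated on the plane spanned by $e_1$ and $e_n$, combined with these connection formulas, yields the Riccati identity; and the torsion-free property together with $e_i(e_n(\lambda))=[e_i,e_n](\lambda)+e_n(e_i(\lambda))$ gives the bracket and constancy claims. One point to make precise: the ``separate Codazzi triple'' that isolates $e_i(\lambda)=0$ must be the purely tangential one, $DA(e_i,e_i,e_j)=DA(e_j,e_i,e_i)$ with $i\ne j$ both in $\Gamma$ (triples involving $e_n$ only ever produce the $\nabla_v e_n$ and $\nabla_{e_n}e_n$ relations, since $DA$ is symmetric in its first two arguments), and this is exactly where the hypothesis $n\ge 3$, i.e.\ $\dim\Gamma\ge 2$, enters.
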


%Thm ends

\begin{proof}
For any $i,j\in\{1,\dots,n-1\}$ with $i\ne j$ (here we are using the fact that the dimension of $M$ is greater than 2) and any $k\in\{1,\dots,n\}$, we have that,

 \begin{eqnarray*}
 DA(e_i,e_j,e_k)&=&e_k\<\, A(e_i),e_j\,\> -\<\, A(\nabla_{e_k}e_i),e_j\,\> -\<\, A(e_i),\nabla_{e_k}e_j\,\> \\
&=&e_k(\lambda \<\, e_i,e_j\,\> )-\<\, \nabla_{e_k}e_i,A(e_j)\,\> -\lambda\<\, e_i,\nabla_{e_k}e_j\,\> \\
&=& e_k( 0)-\lambda\<\, \nabla_{e_k}e_i,e_j\,\> -\lambda\<\, e_i,\nabla_{e_k}e_j\,\> \\
&=& 0-\lambda e_k(\<\, e_i,e_j\,\> )\\
&=& 0
\end{eqnarray*}

On the other hand,
\begin{eqnarray*}
DA(e_i,e_i,e_j)&=&e_j\<\, A(e_i),e_i\,\> -\<\, A(\nabla_{e_j}e_i),e_i\,\> -\<\, A(e_i),\nabla_{e_j}e_i\,\> \\
 &=&e_j(\lambda)\<e_i,e_i\>-\lambda e_j(\<\, e_i,e_i\,\> )\\
&=& e_j(\lambda)\<e_i,e_i\>
\end{eqnarray*}

By the Codazzi equations (\ref{Codazzi equation}), we get that $e_j(\lambda)=0$, for all $j\in\{1,\dots,n-1\}$ and therefore $v(\lambda)=0$ for any  $v\in \hbox{Span}\{e_1,\dots,e_{n-1}\}$. Now,
\begin{eqnarray*}
DA(e_i,e_n,e_j)&=&e_j\<\, A(e_i),e_n\,\> -\<\, A(\nabla_{e_j}e_i),e_n\,\> -\<\, A(e_i),\nabla_{e_j}e_n\,\> \\
&=&e_j(\lambda\<\, e_i,e_n\,\> )-\<\, \nabla_{e_j}e_i,A(e_n)\,\> -\lambda\<\, e_i,\nabla_{e_j}e_n\,\> \\
&=&e_j(0)-\mu\<\, \nabla_{e_j}e_i,e_n\,\> -\lambda\<\, e_i,\nabla_{e_j}e_n\,\> +( \lambda\<\, \nabla_{e_j}e_i,e_n\,\> -\lambda\<\, \nabla_{e_j}e_i,e_n\,\>)\\
&=&(\lambda-\mu)\<\, \nabla_{e_j}e_i,e_n\,\> -\lambda\ e_j(\<\, e_i,e_n\,\> )\\
&=& (\mu-\lambda)\<\, e_i,\nabla_{e_j}e_n\,\>
\end{eqnarray*}

Since $\mu-\lambda\ne0$, using Codazzi equations  we get that,

\begin{eqnarray}\label{den1}
\<\, e_i,\nabla_{e_j}e_n\,\> =0\com{for any $i,j\in\{1,\dots,n-1\}$ with $i\ne j$}
\end{eqnarray}

Now, for any $i\in\{1,\dots,n-1\}$, using the same type of computations as above we can prove that,

$$DA(e_i,e_i,e_n)=e_n(\lambda)\,\<e_i,e_i\> =\, DA(e_i,e_n,e_i)=(\mu-\lambda)\<\, e_i,\nabla_{e_i}e_n\, \>$$

and

$$DA(e_n,e_n,e_i)=d\, e_i(\mu)\,=\, DA(e_i,e_n,e_n)=(\mu-\lambda)\<\, e_i,\nabla_{e_n}e_n\, \>$$

Since we are assuming that $M$ has constant mean curvature and we already know that $v(\lambda)=0$ for any $v\in \Gamma$, then, we also have that $v(\mu)=0$ and therefore, $\<\nabla_{e_n}e_n,e_i\>=0$ for all $i\in\{1,\dots,n-1\}$. Since $e_n$ is a unit vector field, we have that $\<\,\nabla_{e_k}e_n,e_n\,\>=0$ for any $k$ and therefore $\nabla_{e_n}e_n$ vanishes. Using the fact that any vector $z\in T_xM$ can be written as $z=\sum_{k=1}^n \<z,e_k\> \<e_k,e_k\> \, e_k$ we obtain from the previous equations that for any $i\in\{1,\dots,n-1\}$,

\begin{eqnarray*}\label{den2}
\<\, {e_i},\nabla_{e_i}e_n\,\>=
 \frac{{e_n}(\lambda)}{\mu-\lambda}\<e_i,e_i\>\com{ and} \nabla_{e_i}e_n=\frac{e_n(\lambda)}{\mu-\lambda}\, {e_i} 
\end{eqnarray*}

using linearity we get that

\begin{eqnarray*}
\nabla_{v}e_n=\frac{e_n(\lambda)}{\mu-\lambda}\, {v}\com{for any $v\in \Gamma$. }
\end{eqnarray*}

Notice that for any $i,j\in\{1,\dots,n-1\}$ with $i\ne j$, Equation (\ref{den1}), give us that

$$\<\, [e_i,e_j],e_n\,\> =\<\, \nabla_{e_i}e_j-\nabla_{e_j}e_i,e_n\,\>=-\<\,e_j,\nabla_{e_i}e_n\,\> + \<\,e_i,\nabla_{e_j}e_n\,\>=0$$

likewise we have that

$$\<\, [e_i,e_n],e_n\,\> =\<\, \nabla_{e_i}e_n-\nabla_{e_n}e_i,e_n\,\>=-\<\,e_i,\nabla_{e_n}e_n\,\>+\<\,e_i,\nabla_{e_n}e_n\,\>=0$$

Therefore $[e_i,e_j]$ and $[e_i,e_n]$ are in $\hbox{Span}\{e_1,\dots,e_{n-1}\}$. The fact that $e_n(\lambda)$ is constant along the integral submanifolds of $\Gamma$ follows from the following equation,

$$e_i(e_n(\lambda))=[e_i,e_n] (\lambda)+e_n(e_i(\lambda))=0 \com{for $i=1,\dots,n-1$}$$

 Finally, we will use Gauss equation to prove the differential equation on $\lambda$. Since $[e_n,e_1]\in\hbox{Span}\{e_1,\dots,e_{n-1}\}$, using the Gauss equation we get,

\begin{eqnarray*}
(a+b \lambda\mu)\, d\, \<e_1,e_1\>  &=&\<\, R(e_n,e_1)e_n,e_1\,\> \\
&=&\<\,\nabla_{e_1}\nabla_{e_n}e_n-\nabla_{e_n}\nabla_{e_1}e_n+\nabla_{[e_n,e_1]}e_n,e_1\, \>\\
&=&\<\,0-\nabla_{e_n}(\frac{e_n(\lambda)}{\mu-\lambda}\, e_1)+ \frac{e_n(\lambda)}{\mu-\lambda}\, [e_n,e_1]   \, ,e_1 \>\\
&=& -e_n(\frac{e_n(\lambda)}{\mu-\lambda})\, \<e_1,e_1\> + \frac{e_n(\lambda)}{\mu-\lambda} \<\,\nabla_{e_n}e_1-\nabla_{e_1}e_n ,e_1  \, \>\\
&=&-e_n(\frac{e_n(\lambda)}{\mu-\lambda}) \, \<e_1,e_1\> -(\frac{e_n(\lambda)}{\mu-\lambda})^2\, \<e_1,e_1\>\\
&=&\big{(}e_n(\frac{e_n(\lambda)}{\lambda-\mu})-(\frac{e_n(\lambda)}{\lambda-\mu})^2\big{)} \, \<e_1,e_1\>
\end{eqnarray*}

\end{proof}

\begin{rem}\label{multiplicities} Using the same type of arguments as in the previous theorem we can easily show that if  $M\subset \sf$ is a hypersurface with constant mean curvature and with  two principal curvatures, both with multiplicity greater than 1, then the principal curvatures must be constant, that is, $M$ must be isoparametric.

\end{rem}

\section{Useful vector fields and first integrals}\label{vector fields}

As mentioned earlier, $M\subset \sf$ will denote a hypersurface with cmc $H$ and two principal curvatures, one of them with multiplicity one. By changing the Gauss map $\nu$ by $-\nu$, if necessary, we will assume that $\lambda -\mu$ is always positive and, since the case when $\lambda$ and $\mu$ are constant functions is relatively easy, we will also assume that $\lambda -\mu$ is not a constant function. Let us start this section defining some functions and vector fields.

\begin{mydef}\label{def function and vector fields}
We define the function $w:M\to {\bf R}$ as

$$w=\big{(}\frac{1}{n}(\lambda-\mu) \big{)}^{-\frac{1}{n}}=
(\lambda-H)^{-\frac{1}{n}}$$

and the vector fields $\eta$, $X$, $Y$ and $Z$

\begin{eqnarray*}
\eta&=&-e_n(w)\, e_n+d b \lambda\, w\, \nu-d a\, w\, x\\
X&=&\nu+\lambda \, x\\
Y&=& w \, e_n- e_n(w) \, x\\
Z&=& \lambda\, w\, e_n\, +\, e_n(w)\, \nu
\end{eqnarray*}

where $\nu$ is the Gauss map on $M$, $e_n(w)$ is the directional derivative of the function $w$ in the direction $e_n$ and $b$ and $d$ are like in Definition \ref{two principal curvatures general} and $x$ represents the point in $M$ where all these vector fields are being evaluated and also it is viewed as a vector in $\bfR{N}$.

\end{mydef}

\begin{rem}
The fact that $\frac{1}{n}(\lambda-\mu)=
\lambda-H$ easily follows form the cmc equation $(n-1)\lambda+\mu=nH$. We will be using both equations, $w^{-n}=\frac{1}{n}(\lambda-\mu)$ and $w^{-n}=\lambda-H$.
\end{rem}
\begin{rem}\label{X,Y,Z and eta}
The vector field $\eta$ defined above plays an important role in the construction of the immersions of the hypersurfaces with two principal curvatures. It looks like everything in these hypersurfaces has to do with this vector field. As an example, when $a\ne 0$, the vector fields $X$, $Y$ and $Z$ are perpendicular to the vector field $\eta$. Up to a factor, they are the only three vector fields that are perpendicular to $\eta$  that can be written as linear combination of two of the three vector fields $x$, $e_n$ and $\nu$.
\end{rem}

\begin{thm}\label{local relations} The function $w$ and the vector field $\eta$, $X$, $Y$ and $Z$ satisfy the following equations

\begin{eqnarray*}
& &\com{(i)} -\frac{e_n(w)}{w}=\frac{e_n(\lambda)}{\lambda-\mu} \com{or equivalently} we_n(\lambda)+\lambda e_n(w)=\mu e_n(w)\\
& &\com{(ii)} \bar{\nabla}_{v} e_n=\frac{e_n(w)}{w}\, v \com{for any $v\in \Gamma$}\\
& &\com{(iii)}e_ne_n(w)=-a d w-b d w\lambda\mu\\
& &\com{(iv)} (e_n(w))^2+d\, w^2 \, (a+b\lambda^2)=C\com{for some constant $C$}\\
& &\com{(v)}\bar{\nabla}_{e_n}x= e_n\qquad  \bar{\nabla}_{e_n}\nu= -\mu e_n \qquad \bar{\nabla}_{e_n}e_n= b d\mu \nu-a d x\\
& &\com{(vi)} \bar{\nabla}_{v}X, \qquad \bar{\nabla}_{v}Y\ \com{and}  \bar{\nabla}_{v}Z\com{vanish for all} v\in \Gamma\\
& &\com{(vii)}\bar{\nabla}_{e_n}\eta \com{vanishes}\\
& &\com{(viii)}\bar{\nabla}_{v}\eta =-\frac{C}{w} \, v\com{for any $v\in \Gamma$, where $C$ is defined in (iv)}\\
& &\com{(ix)} -w\, \eta+d b\,  \lambda \, w^2 \, X-e_n(w)\, Y=C \, x\\
& &\com{(x)} \bar{\nabla}_{e_n}Y=db w\mu X \\
& &\com{(xi)} \bar{\nabla}_{e_n}X=\frac{\lambda-\mu}{w} Y \\
& &\com{(xii)} \bar{\nabla}_{e_n}Z=-da w\, X \\
& &\com{(xiiii)} \<\eta,\eta\>=d C \com{where $C$ is defined in (iv)}\\
& &\com{(xiv)} \hbox{If} \com{$a=0$, then the vector $Z$ is constant.}
\end{eqnarray*}

\end{thm}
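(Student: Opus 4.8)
The plan is to reduce every one of the fourteen identities to a direct computation in the flat ambient space $\bfR{N}$, resting on two elementary facts: the position field satisfies $\bar\nabla_v x = v$, and the Gauss map satisfies $\bar\nabla_v\nu = -A(v)$. Combining these with the decomposition of a tangent derivative into its $M$-tangential part and its normal components along $\nu$ and (when $a\ne0$) along $x$ — which is exactly the content of $V = v - a\,l_v x - b\,f_v\nu$ and $\nabla_w V = -a\,l_v w + b\,f_v A(w)$ from Section \ref{basic equations}, together with $\langle x,x\rangle = a$ and $\langle\nu,\nu\rangle = b$ — yields the single master formula
\[
\bar\nabla_v W = \nabla_v W + b\,\langle A(v), W\rangle\,\nu - a\,\langle v, W\rangle\, x ,
\]
valid for any tangent field $W$ and tangent vector $v$. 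Specializing to $v=W=e_n$ and invoking $\nabla_{e_n}e_n = 0$ and $A(e_n)=\mu e_n$ from Theorem \ref{equations} gives (v) at once; these three derivatives of $x$, $\nu$, $e_n$ along $e_n$ are the workhorses for all that follows.

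Next I would dispatch the scalar identities (i)--(iv). Identity (i) is logarithmic differentiation of $w=(\lambda-H)^{-1/n}$ along $e_n$, combined with $\lambda-H=\frac{1}{n}(\lambda-\mu)$ and the formula $\nabla_v e_n=\frac{e_n(\lambda)}{\mu-\lambda}v$; the equivalent form is a one-line rearrangement. Identity (ii) then follows from the master formula, since $A(v)=\lambda v$ and $\langle v,e_n\rangle=0$ kill both normal corrections on $\Gamma$, leaving $\bar\nabla_v e_n=\nabla_v e_n=\frac{e_n(w)}{w}v$. Identity (iii) is the last displayed equation of Theorem \ref{equations} rewritten through (i): substituting $\frac{e_n(\lambda)}{\lambda-\mu}=-\frac{e_n(w)}{w}$ collapses the right-hand side to $-\frac{e_n e_n(w)}{w}$. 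For (iv) I would show the candidate $F=(e_n(w))^2+dw^2(a+b\lambda^2)$ has vanishing derivative in every tangent direction: along $\Gamma$ because $\lambda$, $w$ and $e_n(w)$ are constant on the leaves (here one uses $[v,e_n]\in\Gamma$ from Theorem \ref{equations}), and along $e_n$ by a direct expansion whose three surviving terms cancel after inserting (i) and (iii). Connectedness of $M$ then produces the constant $C$.

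The remaining vector identities (vi)--(xiv) are mechanical once (i)--(v) are in hand: for each I expand the field in the frame $\{e_n,\nu,x\}$, differentiate termwise via the master formula or via (v), and substitute the scalar relations — in particular the repeatedly used consequence $e_n(\lambda w)=\mu\,e_n(w)$ of (i). Thus (vi) and (viii) come from $\bar\nabla_v$ with $v\in\Gamma$, the normal corrections being controlled by (ii); (vii), (x), (xi), (xii) come from $\bar\nabla_{e_n}$ and reduce to term-by-term cancellations driven by (iii); (ix) is pure algebra, recognizing the $x$-coefficient as $F=C$; and (xiii) uses the orthogonality $\langle e_n,\nu\rangle=\langle e_n,x\rangle=\langle\nu,x\rangle=0$ with $\langle e_n,e_n\rangle=d$, $\langle\nu,\nu\rangle=b$, $\langle x,x\rangle=a$, giving $\langle\eta,\eta\rangle=d(e_n(w))^2+w^2(a+b\lambda^2)=dC$ after using $b^2=d^2=1$ and $a^3=a$. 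Finally (xiv) is immediate, since (xii) and (vi) show $\bar\nabla_\cdot Z$ vanishes in every direction when $a=0$.

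I expect the only genuinely non-routine step to be (iv), the existence of the constant $C$: it is the one identity whose proof is not a formal substitution but requires the telescoping in the $e_n$-derivative to vanish exactly, so keeping careful track of the $a$-, $b\lambda\mu$- and $b\lambda^2$-terms is where the work lies. Everything else is arranged so that each identity feeds the next, and the chief remaining risk is a sign slip in the master formula (the placement of $b$ and of $-a$) propagating into the later computations.
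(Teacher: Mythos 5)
Your proposal is correct and takes essentially the same route as the paper's proof: (i) by differentiating $w^{-n}=\lambda-H$ along $e_n$, (iii) by substituting (i) into the last equation of Theorem \ref{equations}, (iv) by showing the derivative of $(e_n(w))^2+dw^2(a+b\lambda^2)$ vanishes in all tangent directions (you make the $\Gamma$-directions and connectedness explicit, which the paper leaves implicit), and (v)--(xiv) by the same term-by-term expansions in the frame $\{e_n,\nu,x\}$. Your only departure is organizational: you package the normal-component bookkeeping once and for all into the Gauss formula $\bar{\nabla}_v W=\nabla_v W+b\langle A(v),W\rangle\,\nu-a\langle v,W\rangle\, x$, whereas the paper recovers the same normal components ad hoc by differentiating the orthogonality relations $\langle e_n,\nu\rangle=0$ and $\langle e_n,x\rangle=0$; the computational content is identical.
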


\begin{proof} Let us start by proving (i). Since $w>0$  we have that  $\lambda-H>0$.   By taking the derivative in the direction $e_n$ on the equation  $w^{-n}=\lambda-H$ we get,

$$ -nw^{-n-1}e_n(w)=e_n(\lambda) $$

Therefore, we have

\begin{eqnarray*}
e_n(\lambda)&=&-n\frac{e_n(w)}{w} w^{-n}\\
&=&-n \frac{e_n(w)}{w} \frac{1}{n}(\lambda-\mu)\\
&=&-\frac{e_n(w)}{w}(\lambda-\mu)
\end{eqnarray*}

This finishes the proof of (i). Part (ii) follows from the following three observations: the first one is that  $\nabla_{v} e_n=\frac{e_n(\lambda)}{\mu-\lambda}\, v=\frac{e_n(w)}{w}\, v $ by Theorem \ref{equations}. The second and third observations are that, if we take the derivative of $\<e_n,\nu\>=0$ with respect to $v\in \Gamma$ we get that $\<\bar{\nabla}_{v} e_n,\nu\>=0$, and likewise, when $a\ne0$ we get $\<\bar{\nabla}_{v} e_n,x\>=0$.   

Let us continue proving (iii). From Theorem \ref{equations} we have that

$$ad+bd\, \lambda\mu=e_n(\frac{e_n(\lambda)}{\lambda-\mu})-(\frac{e_n(\lambda)}{\lambda-\mu})^2$$

Using part (i) we get that

$$ ad+bd\, \lambda\mu=e_n(\frac{e_n(\lambda)}{\lambda-\mu})-(\frac{e_n(\lambda)}{\lambda-\mu})^2=
e_n(\frac{-e_n(w)}{w})-(-\frac{e_n(w)}{w})^2=-\frac{e_n(e_n(w))}{w}$$

Then (iii) follows. In order to prove (iv) we notice that by part (i) we have that $e_n(\lambda w)=e_n(w)\, \mu$, therefore

$$e_n(\lambda^2\, w^2)=2\lambda w e_n(\lambda w)=2\lambda w\mu e_n(w)$$

Using this last equation along with part  (iii)  we get

$$ e_n\big{(}\,  a d w^2+ b d \lambda^2 w^2 + (e_n(w))^2\, \big{)}=2 e_n(w) \big{(}\, a d w +b d w  \lambda \mu + e_n(e_n(w))\, \big{)}=2e_n(w) \, 0=0
$$

then (iv) follows. Recall that we already have shown (Theorem \ref{equations}) that $v(e_n(\lambda))=v(\lambda)=0$ for all $v\in \Gamma$. The first two equations in (v) follow by the definition of the vector field $x$ and the definition of shape operator and principal curvature. Since  $\nabla_{e_n}e_n$ vanishes (Theorem \ref{equations}), $\bar{\nabla}_{e_n}e_n$ is a linear combination of $\nu$ and $x$ when $a\ne0$ and, it is a multiple of $\nu$ when $a=0$. We can find these coefficients in the expression of $\bar{\nabla}_{e_n}e_n$ in terms of $\nu$ and $x$ by taking the derivative of the equality $\<e_n,\nu\>=0$ in the direction $e_n$. We get that $\<\bar{\nabla}_{e_n}e_n,\nu\>+\<e_n,\bar{\nabla}_{e_n}\nu\>=0$ or equivalently
$ \<\bar{\nabla}_{e_n}e_n,\nu\>-\mu \, \<e_n,e_n\>=0$ that is $ \<\bar{\nabla}_{e_n}e_n,\nu\>=\mu d$. Likewise, when $a\ne 0$, we have that $\<\bar{\nabla}_{e_n}e_n,x\>=-d$. These equations imply (v).   Let us prove (vi). We have that for any $v\in \Gamma$,

$$\bar{\nabla}_{v}X=\bar{\nabla}_{v}(\nu+\lambda x)=-\lambda v+v(\lambda)\, x + \,  \lambda\, v=-\lambda v+{\bf 0}+ \lambda\, v={\bf 0}$$

and using part (ii)

$$\bar{\nabla}_{v}Y=\bar{\nabla}_{v}(w e_n-e_n(w) x)=w \bar{\nabla}_{v}e_n -e_n(w)\, v = w\frac{e_n(w)}{w} v-e_n(w) v={\bf 0}$$

Finally we have that,

$$\bar{\nabla}_{v}Z=\bar{\nabla}_{v}(\lambda w e_n+e_n(w) \nu)=\lambda w \bar{\nabla}_{v} e_n+ e_n(w)\bar{\nabla}_v \nu = \lambda w\frac{e_n(w)}{w} v-\lambda e_n(w) v={\bf 0}$$

This completes the proof of part (vi).

Part (vii) is a direct computation using the previous parts.

\begin{eqnarray*}
\bar{\nabla}_{e_n}\eta&=& -e_n(e_n(w))e_n-e_n(w)\bar{\nabla}_{e_n}e_n+bde_n(\lambda w) \, \nu+bd\lambda w \bar{\nabla}_{e_n}\nu-ade_n(w) x-ad we_n\\
&=&d(aw+bw\lambda \mu)\, e_n-e_n(w)d(b\mu\nu-ax)+bd\mu e_n(w)\nu-bd\lambda \mu w e_n-ade_n(w) x-ad we_n\\
&=&0
\end{eqnarray*}

In order to prove (viii) we will use the fact that for any $v\in\Gamma$,
$v(e_n(\lambda))$ vanishes. We have

\begin{eqnarray*}
\bar{\nabla}_{v}\eta&=&-e_n(w)\bar{\nabla}_{v}e_n + db\lambda w\, \bar{\nabla}\nu-daw\, v\\
             &=& -e_n(w)\, \frac{e_n(w)}{w}\, v - db\lambda w \lambda \, v -daw\, v\\
             &=& -\frac{C}{w}\, v
\end{eqnarray*}

A direct computation shows that

$$-w\eta+d b \lambda w^2 X-e_n(w)\, Y=(e_n(w)^2+db\lambda^2 w^2+daw^2) x=Cx$$

Then (ix) follows.  Here is the proof of part (x).

\begin{eqnarray*}
\bar{\nabla}_{e_n} Y& =& \bar{\nabla}_{e_n} (w\, e_n-e_n(w)\, x) \cr
                    & =& e_n(w) e_n+ w \bar{\nabla}_{e_n} e_n - e_n(e_n(w))\, x-e_n(w)\, e_n \cr
                    & =& w\,  (bd\mu\, \nu-ad\, x) + (adw+bdw\lambda\mu)\, x \cr
                    & =& wbd\mu\, \nu +  bdw\lambda\mu\, x \cr
                    & =& bdw\mu ( \nu +  \lambda\, x )=bdw\mu X
\end{eqnarray*}

Here is the proof of (xi), it uses part (i)

\begin{eqnarray*}
\bar{\nabla}_{e_n} X& =& \bar{\nabla}_{e_n} (\nu+\lambda\, x) \cr
                    & =& -\mu\, e_n+e_n(\lambda) x+\lambda\, e_n \cr
                    & =& (\lambda-\mu) e_n-w^{-1} (\lambda-\mu) e_n(w)\, x\cr
                    & =& \frac{\lambda-\mu}{w} \, Y
\end{eqnarray*}

Here is the proof of (xii), it uses part (i), (iii) and (vi)

\begin{eqnarray*}
\bar{\nabla}_{e_n} Z& =& \bar{\nabla}_{e_n} (\lambda w\, e_n+e_n(w)\, \nu) \cr
                    & =& e_n(\lambda w)\, e_n +\lambda w \bar{\nabla}_{e_n} e_n +e_n(e_n(w))\, \nu-\mu e_n(w)\, e_n \cr
                    & =& \mu e_n(w) e_n+ \lambda w (bd\mu\, \nu-ad\, x) + (-adw-bdw\lambda\mu)\, \nu-\mu e_n(w)\, e_n    \cr
                    & =& -adw\, \nu-ad\lambda w\, x \cr
                    & =& -adw ( \nu +  \lambda\, x )=-adw\, X
\end{eqnarray*}

Part (xiii) is a direct computation that uses part (iv). Part  (xiv) follows form parts (vi) and (xii). 
\end{proof}

\begin{mydef}\label{def of C}
We will refer to the constant $C$ as the constant associated with the cmc-hypersurface with two principal curvatures $M$.
\end{mydef}

%MOVE THE PART TO AN APPENDIX

\begin{lem}\label{the plane--lemma}
Let $M\subset SF(n+1,k,a)$ be a cmc hypersurface with two principal curvatures, one of them with multiplicity one. Let us  assume that $a=\pm1$. For any $x$ in $M$ let us define,

$$\Pi(x)=\{v\in \hbox{Span}\{x,e_n,\nu\}\, :\, \<v,\eta\>=0\}=\hbox{span}\{X,Y\}$$

The plane $\Pi(x)$ does not depend on $x$. That is, for all $x\in M$

$$\Pi(x)=\Pi$$

where $\Pi$ is fixed.  Moreover, if $C\ne0$, the inner product $\<\, ,\, \>$ on $SF(k,n+1,a)$ restricted to $\Pi$ is not degenerated and for every $x\in M$, $\eta(x)$ is perpendicular to the plane $\Pi$. 

\end{lem}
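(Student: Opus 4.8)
The plan is to deduce all three assertions from the single identity $\Pi(x)=\mathrm{span}\{X(x),Y(x)\}$, combined with the differential relations for $X$ and $Y$ recorded in Theorem \ref{local relations}; the genuinely nontrivial input is already packaged there, so the present lemma is mostly assembly plus a little linear algebra. First I would establish this identity. Because $a=\pm1$, the vectors $x,e_n,\nu$ are mutually orthogonal with $\langle x,x\rangle=a$, $\langle e_n,e_n\rangle=d$ and $\langle\nu,\nu\rangle=b$, so $W:=\mathrm{span}\{x,e_n,\nu\}$ carries the nondegenerate form with Gram matrix $\mathrm{diag}(a,d,b)$. Since $\eta$ lies in $W$ and is nonzero (its $x$-component equals $-daw$ with $w>0$), the linear functional $v\mapsto\langle v,\eta\rangle$ on $W$ has a two-dimensional kernel, which is exactly $\Pi(x)$. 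A direct computation using $a^2=b^2=1$ gives
\begin{eqnarray*}
\langle X,\eta\rangle&=&d\lambda w\,(b^2-a^2)=0,\\
\langle Y,\eta\rangle&=&d\,w\,e_n(w)\,(a^2-1)=0,
\end{eqnarray*}
and $X,Y$ are independent because $X$ carries the $\nu$-direction while $Y$ carries the $e_n$-direction. Hence $\Pi(x)=\mathrm{span}\{X,Y\}$.

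Next I would prove that the plane is constant by a moving-frame argument, which I expect to be the central step. By Theorem \ref{local relations}(vi),(x),(xi), for any tangent direction $u=v+c\,e_n$ with $v\in\Gamma$ one has $\bar{\nabla}_uX=c\,\frac{\lambda-\mu}{w}\,Y$ and $\bar{\nabla}_uY=c\,dbw\mu\,X$; that is, the ambient derivatives of $X$ and $Y$ remain inside $\mathrm{span}\{X,Y\}$. Fixing a base point $x_0$ and joining it to an arbitrary $x$ by a curve $\alpha(s)$ in $M$, the pair $\bigl(X(\alpha(s)),Y(\alpha(s))\bigr)$ then solves a linear system $\frac{d}{ds}(X,Y)^{T}=B(s)\,(X,Y)^{T}$ with a $2\times2$ matrix $B(s)$ of scalar functions. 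Its fundamental matrix is invertible, so $X(\alpha(s))$ and $Y(\alpha(s))$ always span the same plane as $X(x_0),Y(x_0)$; since $M$ is connected, $\Pi(x)=\Pi$ for a fixed plane $\Pi$.

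Finally, the statements about $\eta$ follow quickly. Perpendicularity $\eta(x)\perp\Pi$ is immediate, since $\Pi=\mathrm{span}\{X(x),Y(x)\}$ and $\langle\eta,X\rangle=\langle\eta,Y\rangle=0$ were shown above. For nondegeneracy when $C\neq0$, I would use Theorem \ref{local relations}(xiii), namely $\langle\eta,\eta\rangle=dC\neq0$: thus $\eta$ is a non-null vector of the nondegenerate $3$-space $W$, and $\Pi=\eta^{\perp}\cap W$ is its orthogonal complement inside $W$. Since the orthogonal complement of a non-null vector in a nondegenerate inner product space is again nondegenerate, the form restricted to $\Pi$ is nondegenerate, completing the argument.
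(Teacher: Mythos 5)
Your proof is correct and follows essentially the same route as the paper: the heart of both arguments is that Theorem \ref{local relations} parts (vi), (x), (xi) make the pair $(X,Y)$ solve a linear ODE system along any curve in $M$, so by uniqueness of solutions their span is preserved, and connectedness of $M$ gives the constancy of $\Pi$. You additionally write out the orthogonality computations $\langle X,\eta\rangle=\langle Y,\eta\rangle=0$, the dimension count identifying $\Pi(x)$ with $\mathrm{span}\{X,Y\}$, and the nondegeneracy of $\Pi$ via $\langle\eta,\eta\rangle=dC\neq0$ --- details the paper's proof leaves implicit, since it only addresses the linear independence of $X,Y$ and the constancy of the plane.
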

\begin{proof}
It is not difficult to show that the vectors $X$ and $Y$ are linearly independent.  The plane $\Pi(x)$ is independent of $x$ 
because by Theorem \ref{local relations} part (vi), we have that the vector fields $X$ and $Y$ are constant on the integral submanifolds of $\Gamma$. Also, if we denote by $u_1=X(x_0)$ and $u_2=Y(x_0)$, then, we conclude that the span of the vectors $X$ and $Y$ is constant along the integral curve $\gamma(t)$ of the vector field $e_n$ satisfying $\gamma(0)=x_0$ because by the existence and uniqueness theorem of differential equations, the solution $X$ and $Y$ of the system  $\bar{\nabla}_{e_n}X=\frac{\lambda-\mu}{w} Y $ and $\bar{\nabla}_{e_n}Y=db w\mu X $ are linear combination of the  vectors $u_1$ and $u_2$.
\end{proof}

\begin{mydef}\label{the plane--def}
When  $a\ne0$, we will referring to the plane $\Pi$ in Lemma \ref{the plane--lemma} as the plane associated to the hypersurface $M$.
\end{mydef}

\begin{mydef}\label{def of zeta}
If the vector fields $x,\eta$, the function $w$ and the constant $C$ are defined as before, and $C\ne0$, then we define the vector field $\zeta$ as
$$\zeta=x+\frac{w}{C}\eta$$
\end{mydef}

\begin{lem}\label{zeta}
The vector field $\zeta$ defined in Definition \ref{def of zeta} satisfies that

\begin{eqnarray*}
& \bar{\nabla}_v\zeta \com{vanishes for all $v\in \Gamma$}\\
&\bar{\nabla}_{e_n}\zeta=e_n+\frac{e_n(w)}{C}\, \eta \\
&\zeta(x) \in \Pi \com{for all $x$ in $M$}
\end{eqnarray*}
\end{lem}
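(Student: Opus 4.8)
The plan is to prove the three claims about $\zeta = x + \frac{w}{C}\eta$ by direct computation, using the covariant derivative formulas for $\eta$ and the basic relations collected in Theorem \ref{local relations}. All three parts reduce to computing $\bar\nabla\zeta$ in the two relevant directions ($v\in\Gamma$ and $e_n$) and then identifying the result, so the engine is the product rule for $\bar\nabla$ applied to $\frac{w}{C}\eta$, together with the already-established identities $\bar\nabla_v\eta = -\frac{C}{w}v$ (part (viii)), $\bar\nabla_{e_n}\eta = 0$ (part (vii)), $\bar\nabla_{e_n}x = e_n$ (part (v)), and $\bar\nabla_v x = v$ (which holds since $x$ is the position vector, so its directional derivative in any tangent direction $v$ is just $v$).

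For the first claim, I would fix $v\in\Gamma$ and expand
\begin{eqnarray*}
\bar\nabla_v\zeta &=& \bar\nabla_v x + \frac{1}{C}\big(v(w)\,\eta + w\,\bar\nabla_v\eta\big).
\end{eqnarray*}
Here $v(w)=0$ because $w$ is a function of $\lambda$ alone and $v(\lambda)=0$ for $v\in\Gamma$ by Theorem \ref{equations}; and $\bar\nabla_v x = v$. Substituting $\bar\nabla_v\eta = -\frac{C}{w}v$ from part (viii) gives $\bar\nabla_v\zeta = v + \frac{1}{C}\,w\left(-\frac{C}{w}\right)v = v - v = 0$, which is exactly the vanishing claimed. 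This is the cleanest of the three and I expect no obstacle.

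For the second claim I would compute in the $e_n$ direction the same way:
\begin{eqnarray*}
\bar\nabla_{e_n}\zeta &=& \bar\nabla_{e_n}x + \frac{1}{C}\big(e_n(w)\,\eta + w\,\bar\nabla_{e_n}\eta\big) = e_n + \frac{e_n(w)}{C}\,\eta,
\end{eqnarray*}
using $\bar\nabla_{e_n}x = e_n$ from part (v) and $\bar\nabla_{e_n}\eta = 0$ from part (vii). This matches the stated formula verbatim, so again it is a short substitution.

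The third claim, $\zeta(x)\in\Pi$, is the one requiring slightly more thought, since $\Pi=\operatorname{span}\{X,Y\}$ is characterized in Lemma \ref{the plane--lemma} as $\{v\in\operatorname{span}\{x,e_n,\nu\}:\langle v,\eta\rangle=0\}$. The natural route is to verify that $\zeta$ lies in $\operatorname{span}\{x,e_n,\nu\}$ and that $\langle\zeta,\eta\rangle=0$. The first holds because $x$, and $\eta = -e_n(w)\,e_n + db\lambda w\,\nu - daw\,x$, are both visibly combinations of $x,e_n,\nu$. For the orthogonality I would compute $\langle\zeta,\eta\rangle = \langle x,\eta\rangle + \frac{w}{C}\langle\eta,\eta\rangle$; by part (xiii), $\langle\eta,\eta\rangle = dC$, so the second term is $\frac{w}{C}\cdot dC = dw$. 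The main point is therefore to evaluate $\langle x,\eta\rangle$: expanding the definition of $\eta$ and using $\langle x,x\rangle = a^{-1}$ when $a=\pm1$ (that is, $\langle x,x\rangle = \frac1a$, the defining relation of $SF(n+1,k,a)$ together with $\langle x,\nu\rangle = \langle x,e_n\rangle = 0$) yields $\langle x,\eta\rangle = -daw\langle x,x\rangle = -dw$. Hence $\langle\zeta,\eta\rangle = -dw + dw = 0$, giving $\zeta\in\Pi$. The one place to be careful is keeping the signs and the factor $\langle x,x\rangle=\frac1a$ straight across the cases $a=1$ and $a=-1$; this bookkeeping is the only real obstacle, and once it is handled the inclusion follows immediately.
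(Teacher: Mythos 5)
Your proposal is correct. Parts one and two coincide with the paper's argument: the paper simply cites $\bar{\nabla}_{e_n}\eta=0$ (part (vii)) and $\bar{\nabla}_v\eta=-\frac{C}{w}\,v$ (part (viii)) of Theorem \ref{local relations}, and your product-rule expansion with $v(w)=0$, $\bar{\nabla}_vx=v$, $\bar{\nabla}_{e_n}x=e_n$ is exactly the computation being left implicit there. For the third claim, however, you take a genuinely different route. The paper uses the algebraic identity (ix) of Theorem \ref{local relations}, $-w\eta+db\lambda w^2X-e_n(w)Y=Cx$, which after dividing by $C$ and adding $\frac{w}{C}\eta$ gives the explicit expansion $\zeta=-\frac{e_n(w)}{C}\,Y+db\lambda\frac{w^2}{C}\,X$, so that $\zeta\in\operatorname{span}\{X,Y\}=\Pi$ is immediate. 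You instead verify the other characterization of $\Pi$ from Lemma \ref{the plane--lemma}: $\zeta\in\operatorname{span}\{x,e_n,\nu\}$ and $\<\zeta,\eta\>=\<x,\eta\>+\frac{w}{C}\<\eta,\eta\>=-dw+dw=0$, using part (xiii) and the metric relations $\<x,e_n\>=\<x,\nu\>=0$, $\<x,x\>=a$ (your $a^{-1}$ is the same number since $a=\pm1$, so the sign bookkeeping you worry about is harmless). Both arguments are sound and of comparable length; the paper's version buys the explicit $X,Y$-coordinates of $\zeta$, while yours is more transparent about \emph{why} $\zeta$ is orthogonal to $\eta$ and does not require recalling identity (ix). One point worth making explicit in your write-up: the third claim only makes sense when $a\ne0$, since $\Pi$ is defined only in that case (Definition \ref{the plane--def}); your restriction to $a=\pm1$ is therefore not a loss of generality but the actual scope of the statement.
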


\begin{proof}
The first two parts follow from the fact $\bar{\nabla}_{e_n}\eta$ vanishes and the equation   $\bar{\nabla}_v\eta=-\frac{C}{w}\, v$ for any $v\in \Gamma$. The last part follows from the fact that 

$$\zeta = -\frac{1}{C}e_n(w)\, Y+ d b\lambda\frac{w^2}{C}\, X$$
\end{proof}

%Aqui comienza la seccion The immersions.
%
%
%
%EXPLICIT IMMERSION

\section{Explicit examples of immersions}\label{explicitEx}

In this section we define explicit immersions of cmc in $\sf$ in terms of a solution of a system of ordinary differential equations. Maybe the most important differential equation in the study of cmc hypersurfaces with two principal curvatures is the following

\begin{eqnarray}\label{theode}
(g^\prime(t))^2= C-d g(t)^2 (a + b (H+g(t)^{-n})^2)
\end{eqnarray}

Using separation of variables we have that a local solution near zero with $g(0)=v_0>0$ exists, if and only if, $C-d v_0^2 (a + b (H+v_0^{-n})^2)>0$. We will only consider positive solutions of the differential equation. Given a solution $g(t)$ of the differential equation (\ref{theode}) we define

\begin{eqnarray}\label{kappas}
\kappa_1(t)= H+g(t)^{-n}\, \text{and } \, \kappa_2(t)=H-(n-1)g(t)^{-n}
\end{eqnarray}

We have the following identities 

\begin{lem}\label{lk1k2} If $g(t)$ is a positive solution of (\ref{theode}) and $\kappa_1$ and $\kappa_2$ are defined as in (\ref{kappas}) then, the following relations are true

\begin{enumerate}
\item
$g^{\prime\prime}(t)=-dg(t)\left(a+b\kappa_1(t)\kappa_2(t) \right)$

\item
$(g(t)\kappa_1(t))^\prime=\kappa_2(t) g^\prime(t)$
\end{enumerate}

\end{lem}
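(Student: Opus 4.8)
The plan is to establish the algebraically simpler identity (2) first and then feed it into the differentiation of the first integral (\ref{theode}) to obtain (1). For (2) I would just compute directly from the definitions (\ref{kappas}): since $g\kappa_1 = g(H+g^{-n}) = Hg + g^{1-n}$, differentiating gives $(g\kappa_1)' = Hg' + (1-n)g^{-n}g' = \bigl(H-(n-1)g^{-n}\bigr)g' = \kappa_2\, g'$. This needs nothing beyond the product/power rule and is essentially immediate.

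For (1) the first move is to rewrite the right-hand side of (\ref{theode}) in terms of $g\kappa_1$. Because $H+g^{-n}=\kappa_1$, equation (\ref{theode}) reads $(g')^2 = C - da\,g^2 - db\,(g\kappa_1)^2$. Differentiating both sides in $t$ gives $2g'g'' = -2da\,gg' - 2db\,(g\kappa_1)(g\kappa_1)'$, and now I substitute identity (2), $(g\kappa_1)'=\kappa_2 g'$, to obtain $2g'g'' = -2d\,gg'\,(a+b\kappa_1\kappa_2)$, i.e. $g'\bigl(g'' + d g(a+b\kappa_1\kappa_2)\bigr)=0$ for all $t$. Cancelling the factor $g'$ yields exactly the claimed formula $g'' = -dg(a+b\kappa_1\kappa_2)$.

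The one point that requires care, and which I expect to be the only real obstacle, is that the cancellation of $g'$ is valid only where $g'\neq 0$. I would dispatch the zeros of $g'$ by continuity. For the positive solutions under consideration here, produced by separation of variables with $C - dv_0^2\bigl(a+b(H+v_0^{-n})^2\bigr)>0$ so that $g'(0)\neq 0$, a point $t_0$ with $g'(t_0)=0$ forces the right-hand side $F$ of (\ref{theode}), viewed as a function of $g$, to vanish at $g(t_0)$; at such a turning point $(g')^2 = F(g)\approx F'(g(t_0))\bigl(g-g(t_0)\bigr)$, so $g-g(t_0)\sim \tfrac14 F'(g(t_0))(t-t_0)^2$ and $g'$ has an isolated zero. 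Thus $g'\neq 0$ on a dense open set, and since both sides of (1) are continuous in $t$, the identity extends to all of $t$. (A short computation shows $F'(g) = -2dg(a+b\kappa_1\kappa_2)$, so for a genuine constant solution, where $F$ has a double zero, the factor $a+b\kappa_1\kappa_2$ vanishes as well, consistent with $g''=0$.) With this bookkeeping in place the algebra from the previous paragraph is routine.
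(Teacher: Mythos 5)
Your proof is correct and is exactly the computation the paper intends: the paper states Lemma \ref{lk1k2} without proof, and its later theorem on constant principal curvatures identifies the content of the lemma as the pair of equations $(g')^2=f(g)$ and $g''=\tfrac12 f'(g)$, which is precisely your derivation --- (2) by the product rule applied to $g\kappa_1=Hg+g^{1-n}$, and (1) by differentiating the first integral and substituting (2). Your bookkeeping at the zeros of $g'$ is also the right way to justify the cancellation, since the solutions the paper actually considers are either local separation-of-variables solutions (on which $g'\ne 0$ throughout) or their full-line extensions, whose turning points are isolated simple or double zeros of $f$, so cancelling $g'$ on the dense open set where $g'\ne 0$ and extending both sides of (1) by continuity is legitimate.
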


\subsection{Immersion with $C=0$ and $a\ne 0$} In order to have a local immersion with cmc $H$, $\<e_n,e_n\>=d$ and $\<\nu,\nu\>=b$, we only need to have a positive solution of the differential equation \ref{theode}.

\begin{thm}\label{ceq0aneq0} Let us assume that  for some $a,b,d$  and $H$ with $|a|=|b|=|d|=1$, the function $g(t)>0$ is defined near $t=0$ and satisfies the differential equation  (\ref{theode}) with $C=0$. Let us  also assume that we can select orthogonal vectors $u_1$, $u_2$, $u_3$ in $\lf$ with $\<u_1,u_1\>=a$, $\<u_2,u_2\>=b$ and $\<u_3,u_3\>=d$. Let us consider $S=\{y\in \lf : \<y,u_1\>=\<y,u_2\>=\<y,u_3\>=0\}$
and $\alpha(t),\beta(t)\in \lf$ a solution of the system of ordinary differential equations

\begin{eqnarray*}
& &\alpha^{\prime\prime}(t)=bd\kappa_1(t)\beta(t)-ad\alpha(t)\, \quad \beta^\prime(t)=-\kappa_2(t)\alpha^\prime(t)\\
& & \alpha(0)=u_1,\quad \beta(0)=u_2,\quad \alpha^\prime(0)=u_3
\end{eqnarray*}

The immersion

$$\phi(y,t)=y+\alpha(t)+\frac{d}{2g(t)}\, \<y,y\> \rho_0$$

with 

$$\rho_0= -g^\prime(0) u_3+bd g(0)\kappa_1(0)u_2-da g(0) u_1$$

has  constant mean curvature $H$ and principal curvatures, $\kappa_1(t)$ with multiplicity $n-1$, and $\kappa_2(t)$ with multiplicity one. \end{thm}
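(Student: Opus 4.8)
The plan is to verify directly that $\phi$ is an immersion into $\sf$ whose shape operator has the advertised eigenvalues. The whole argument rests on two structural facts about the data: that $(\alpha(t),\alpha'(t),\beta(t))$ is, for every $t$, a pseudo-orthonormal frame of the fixed $3$-plane $\mathrm{span}\{u_1,u_2,u_3\}$ (with $\langle\alpha,\alpha\rangle=a$, $\langle\alpha',\alpha'\rangle=d$, $\langle\beta,\beta\rangle=b$ and the three mutually orthogonal), and that $\rho_0$ is a null vector of that plane. Since the defining system for $(\alpha,\alpha',\beta)$ is linear with scalar coefficients, the frame stays inside $\mathrm{span}\{u_1,u_2,u_3\}$; hence every $y\in S$ is automatically orthogonal to $\alpha,\alpha',\beta$ and to $\rho_0$, and this is exactly what lets the $y$- and $t$-variables decouple in each inner product below.

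First I would prove the pseudo-orthonormality of the frame. Writing the system for the state $(\alpha,\alpha',\beta)$ in first order, its coefficient matrix is skew-adjoint with respect to the constant metric $\mathrm{diag}(a,d,b)$ precisely because $|a|=|b|=|d|=1$; consequently the Gram matrix of the frame is a constant of motion, and since it equals $\mathrm{diag}(a,d,b)$ at $t=0$ it remains so. (This skew-adjointness holds exactly when the curvature function multiplying $\beta$ in the $\alpha''$-equation agrees with the one in $\beta'=-\kappa_2\alpha'$, i.e.\ equals the $e_n$-eigenvalue $\mu=\kappa_2$ of the structure equation $\bar{\nabla}_{e_n}e_n=bd\mu\,\nu-ad\,x$ from Theorem \ref{local relations}(v); the off-diagonal term $\langle\alpha',\beta\rangle$ fails to stay zero otherwise.) Next I would record the scalar relations tying the frame to $\rho_0$: from the $C=0$ form of (\ref{theode}) one gets $\langle\rho_0,\rho_0\rangle=0$, and differentiating $\langle\alpha,\rho_0\rangle$, $\langle\alpha',\rho_0\rangle$, $\langle\beta,\rho_0\rangle$ yields a small closed linear system whose solution, using both identities of Lemma \ref{lk1k2}, is $\langle\alpha,\rho_0\rangle=-dg$, $\langle\alpha',\rho_0\rangle=-dg'$, $\langle\beta,\rho_0\rangle=dg\kappa_1$.

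With these in hand the geometry falls out. Expanding $\langle\phi,\phi\rangle$ and using $\langle y,\alpha\rangle=\langle y,\rho_0\rangle=0$, $\langle\rho_0,\rho_0\rangle=0$ and $\langle\alpha,\rho_0\rangle=-dg$, the $\langle y,y\rangle$- and $\langle y,y\rangle^2$-terms cancel and $\langle\phi,\phi\rangle\equiv a$, so $\phi$ maps into $\sf$. Differentiating gives the tangent frame $\phi_t=\alpha'-\tfrac{dg'}{2g^2}\langle y,y\rangle\rho_0$ and, for $v\in S$, $D_v\phi=v+\tfrac{d}{g}\langle y,v\rangle\rho_0$, which are independent, so $\phi$ is an immersion. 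I would then check that
\[
\nu=\beta-\kappa_1\,y-\frac{d\kappa_1\langle y,y\rangle}{2g}\,\rho_0
\]
is orthogonal to $\phi$ and to the whole tangent space and satisfies $\langle\nu,\nu\rangle=b$ (again by the three $\rho_0$-identities), hence is a Gauss map. Finally $-D_v\nu=\kappa_1\,D_v\phi$ for $v\in S$, producing $\kappa_1$ with multiplicity $n-1$, while $-\partial_t\nu=\kappa_2\,\phi_t-ng^{-n-1}g'\,(y+\tfrac{d\langle y,y\rangle}{g}\rho_0)$ exhibits $A(\phi_t)$ as $\kappa_2\phi_t$ plus a tangential multiple of a $\Gamma$-direction. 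The shape operator is therefore triangular with eigenvalues $\kappa_1$ (multiplicity $n-1$) and $\kappa_2$ (multiplicity one), and $\tfrac1n\bigl((n-1)\kappa_1+\kappa_2\bigr)=H$.

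The main obstacle is the bookkeeping forced by the null direction $\rho_0$. Because $\langle\rho_0,\rho_0\rangle=0$ while $\langle\rho_0,\phi\rangle=-dg\neq0$, the vector $\rho_0$ is neither tangent nor normal, so the quadratic $\langle y,y\rangle$-terms must be shown to cancel exactly in $\langle\phi,\phi\rangle$ and in $\langle\nu,\nu\rangle$, and to recombine into a genuine $\Gamma$-direction inside $A(\phi_t)$; all of these cancellations are governed by the three $\rho_0$-identities, which in turn rest on the two relations of Lemma \ref{lk1k2}. Establishing those identities, equivalently the pseudo-orthonormality of $(\alpha,\alpha',\beta)$, is the technical heart of the proof.
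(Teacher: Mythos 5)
Your proposal is correct, and at bottom it is the same verification the paper carries out: preservation of the pseudo-orthonormal frame $(\alpha,\alpha',\beta)$, the three pairings of the frame with $\rho_0$, and then the direct checks that $\langle\phi,\phi\rangle=a$, that $\phi$ is an immersion, that $\nu=\kappa_1\alpha+\beta-\kappa_1\phi$ is a Gauss map, and that the shape operator has the two advertised eigenvalues. Two points of comparison are worth recording, both in your favor. First, your parenthetical remark about skew-adjointness is not a side comment: it detects a genuine typo in the statement. With $bd\kappa_1(t)\beta(t)$ as literally written, at the claimed equilibrium values one gets $\frac{d}{dt}\langle\alpha',\beta\rangle=bd\kappa_1\langle\beta,\beta\rangle-\kappa_2\langle\alpha',\alpha'\rangle=d(\kappa_1-\kappa_2)=d\,n\,g^{-n}\neq 0$, so the paper's assertion that the constant Gram matrix is an equilibrium of the first-order system is false, and likewise the vector $\rho(t)=-g'\alpha'+bdg\kappa_1\beta-dag\alpha$ satisfies $\rho'(t)=bdg'(\kappa_2-\kappa_1)\beta\neq 0$. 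Both claims become true only after replacing $\kappa_1$ by $\kappa_2$ in the $\alpha''$-equation, which is what the structure equation $\bar{\nabla}_{e_n}e_n=bd\mu\,\nu-ad\,x$ of Theorem \ref{local relations}(v) dictates, $\mu=\kappa_2$ being the multiplicity-one curvature; the paper's proof silently uses this corrected system, whereas you correct it explicitly. Second, your mechanical variants are equivalent to the paper's: you obtain $\langle\alpha,\rho_0\rangle=-dg$, $\langle\alpha',\rho_0\rangle=-dg'$, $\langle\beta,\rho_0\rangle=dg\kappa_1$ from a closed scalar ODE system plus Lemma \ref{lk1k2}, where the paper shows the vector $\rho(t)$ itself is constant; and you read off $\kappa_2$ from the triangular form of the shape operator in the basis $\{\phi_t,\frac{\partial\phi}{\partial y_1},\dots,\frac{\partial\phi}{\partial y_{n-1}}\}$, where the paper exhibits the honest eigenvector $\xi=\phi_t+\frac{g'}{g}\sum_i y_i\frac{\partial\phi}{\partial y_i}$ with $d\nu(\xi)=-\kappa_2\,\xi$; your identity $-\partial_t\nu=\kappa_2\phi_t-ng^{-n-1}g'\sum_i y_i\frac{\partial\phi}{\partial y_i}$ is precisely what makes the paper's $\xi$ an eigenvector, so the two computations coincide. (Your sign on the Gauss map, $-\frac{d\kappa_1\langle y,y\rangle}{2g}\rho_0$, is also the correct one: the paper's expanded display for $\nu$ carries a sign typo, though its defining expression $\kappa_1\alpha+\beta-\kappa_1\phi$ agrees with yours.)
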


\begin{proof}
 We can show that  the functions $\gamma_{11}=\<\alpha(t),\alpha(t)\>$, $\gamma_{12}=\<\alpha(t),\beta(t)\>$, $\gamma_{13}=\<\alpha(t),\alpha^\prime(t)\>$, $\gamma_{22}=\<\beta(t),\beta(t)\>$, $\gamma_{23}=\<\beta(t),\alpha^\prime(t)\>$ and $\gamma_{33}=\<\alpha^\prime(t),\alpha^\prime(t)\>$ satisfy a first order system of ordinary differential equations with $\gamma_{11}(t)=a$, $\gamma_{12}(t)=0$, $\gamma_{13}(t)=0$, $\gamma_{22}(t)=b$, $\gamma_{23}(t)=0$ and $\gamma_{33}(t)=d$ an equilibrium solution. Therefore, $\alpha(t)$, $\beta(t)$, $\alpha^\prime(t)$ are perpendicular and   $\<\alpha(t),\alpha(t)\>=a$, $\<\beta(t),\beta(t)\>=b$ and $\<\alpha^\prime(t),\alpha^\prime(t)\>=d$. Using Lemma \ref{lk1k2} we can check that if
 
  $$\rho(t)= -g^\prime(t) \alpha^\prime(t)+bd g(t)\kappa_1(t)\beta(t)-da g(t) \alpha(t)$$
  
  then, $\rho^\prime(t)=0$ and $\rho(t)=\rho(0)=\rho_0$. Therefore,

\begin{eqnarray}\label{inner products}
\<\rho_0,\alpha(t)\>=\<\rho(t),\alpha(t)\> = - d\, g(t), \quad 
 \<\rho_0,\beta(t)\>=d\,\kappa_1(t)\,  g(t),\quad \<\rho_0,\alpha\, ^\prime(t)\>=-d\, g^\prime(t)\, .
\end{eqnarray}

We have that 

$$\phi_t=-\frac{d g^\prime(t)}{2 g(t)^2} \, \<y,y\>\, \rho_0+\alpha\, ^\prime(t) $$

and, if we define the coordinates $y_1,\dots, y_{n-1}$ on $S$ by the equation $y=\sum_{i=1}^{n-1}y_iE_i$ where $E_1,\dots, E_{n-1}$ form an orthogonal basis of $S$, then,  for $i=1,\dots, n-1$ we have 

$$\frac{\partial \phi}{\partial y_i}(y,t)= \frac{d}{g(t)}\, \<y,E_i\>\, \rho_0+ E_i $$

Let us  show that  $\phi$ is an immersion.  By the uniqueness part of the existence and uniqueness theorem of ordinary differential equations, we have that
 $\alpha(t),\beta(t)$ are in the Span$\{u_1,u_2,u_3\}$. Therefore, $\alpha(t),\alpha^\prime(t), \beta(t)$ and
$\rho_0$ are perpendicular to all $E_i$'s. Since we are assuming that $C=0$ we have that $\<\rho_0,\rho_0\>=0$ and therefore, $\<\frac{\partial \phi}{\partial y_i},\frac{\partial \phi}{\partial y_j}\>=\<E_i,E_j\>$, and the vectors $\frac{\partial \phi}{\partial y_j}$ are perpendicular. In order to show that $\phi$ is an immersion, we notice that the vector $\frac{\partial \phi}{\partial t}$ cannot be written as a linear combination of the vectors $\frac{\partial \phi}{\partial y_i}$. If we define

$$\xi=\frac{\partial \phi}{\partial t}+\frac{g^\prime}{g}\, \sum_{i=1}^{n-1}y_i\, \frac{\partial \phi}{\partial y_i}\, =\, \frac{dg^\prime\<y,y\>}{2g^2}\, \rho_0+\alpha\, ^\prime+\frac{g^\prime}{g}\, y$$ 
 
%%%

then, a direct computation using the fact that $\<\rho_0,\alpha\, ^\prime\>=-dg^\prime$, give us that $\<\xi,\xi\>=d$ and $\<\xi,\frac{\partial \phi}{\partial y_i}\>=0$ for all $i=1,\dots,n-1$. Since 

$$\text{Span}\{\xi,\frac{\partial \phi}{\partial y_1},\dots, \frac{\partial \phi}{\partial y_{n-1}}\}=
\text{Span}\{\frac{\partial \phi}{\partial t},\frac{\partial \phi}{\partial y_1},\dots, \frac{\partial \phi}{\partial y_{n-1}}\}$$

and the vectors $\{\xi,\frac{\partial \phi}{\partial y_1},\dots, \frac{\partial \phi}{\partial y_{n-1}}\}$ are linearly independent (they are perpendicular to each other), then $\{\frac{\partial \phi}{\partial t},\frac{\partial \phi}{\partial y_1},\dots, \frac{\partial \phi}{\partial y_{n-1}}\}$ are linearly independent. This shows that $\phi$ in an immersion.

A direct computation using equations \ref{inner products}  shows that

$$\<\phi,\alpha\>=a-\frac{\<y,y\>}{2},\quad \<\phi,\alpha\, ^\prime\>=-\frac{\<y,y\>g^\prime}{2g}\, ,\quad \<\phi,\beta\>=\frac{\<y,y\>}{2}\, \kappa_1 $$

Therefore, we can check that if

$$\nu=\kappa_1(t)\,  \alpha(t)+\beta(t)-\kappa_1(t)\, \phi(y,t)=\beta+\frac{d\kappa_1(t)}{2g} \<y,y\>\, \rho_0-\kappa_1(t) y$$

then, $\<\nu,\nu\>=b$, $\<\nu,\phi\>=0$, $\<\nu,\frac{\partial \phi}{\partial t}\>=0$ and $\<\nu,\frac{\partial \phi}{\partial y_i}\>=0$ for $i=1,\dots, n-1$. Therefore, $\nu$ is a Gauss map of the immersion $\phi$. By the definition of $\nu$ it easily follows that  $\frac{\partial \nu}{\partial y_i} =-\kappa_1 \frac{\partial \phi}{\partial y_i}$, therefore $\kappa_1$ is a principal curvature of $\phi$ with multiplicity $n-1$. On the other hand we have that if $d\nu$ denotes the differential of $\nu$, then

\begin{eqnarray*}
d\nu(\xi) &=&   \frac{\partial \nu}{\partial t}+\frac{g^\prime}{g}\, \sum_{i=1}^{n-1}y_i\, \frac{\partial \nu}{\partial y_i}\\
&=& \frac{\partial\left( \beta-\frac{d\kappa_1\<y,y\>}{2g}\, \rho_0-\kappa_1 y \right)  }{\partial t}-\frac{\kappa_1g^\prime}{g}\,  \sum_{i=1}^{n-1}y_i\, \frac{\partial \phi}{\partial y_i}\\
&=&-\kappa_2 \, \alpha\, ^\prime - \frac{d}{2}\, \<y,y\> (\frac{\kappa_1^\prime g-g^\prime \kappa_1}{g^2})\, \rho_0   -  \kappa_1^\prime y-\frac{\kappa_1g^\prime}{g}\,  \left(\frac{d}{g}\<y,y\>\rho_0 + y \right)\\
&=&-\kappa_2 \, \alpha\, ^\prime - \frac{d}{2}\, \<y,y\> (\frac{\kappa_2 g^\prime-2g^\prime \kappa_1}{g^2})\, \rho_0 - \kappa_1^\prime y-\frac{\kappa_1g^\prime}{g}\,  \left(\frac{d}{g}\<y,y\>\rho_0 + y \right)\\
&=&-\kappa_2 \, \alpha\, ^\prime - \frac{d\kappa_2 g^\prime}{2 g^2}\, \<y,y\> \, \rho_0 - (\frac{\kappa_1^\prime g+\kappa_1g^\prime}{g})\,   y \\
&=&-\kappa_2 \, \alpha\, ^\prime - \frac{d\kappa_2 g^\prime}{2 g^2}\, \<y,y\> \, \rho_0 - \frac{\kappa_2g^\prime}{g}\,   y \\
&=& -\kappa_2\, \xi
\end{eqnarray*}

Therefore, $\kappa_2$ is a principal curvature with multiplicity 1. Since $(n-1) \kappa_1+\kappa_2=n H$ we conclude that $\phi$ has constant mean curvature and the theorem follows.

\end{proof}

%%
%%Case a=0, C=0
%%

\subsection{Immersion with $C=0$ and $a= 0$} The  case $C=0$ and $a=0$ is almost exactly the same. The only difference is that since $\sf$ is a linear space in this case, then we do not need to use the the space $\lf$ and without loss of generality we can use $u_1={\bf 0}=(0,\dots , 0)$.

\begin{thm} \label{ceq0aeq0} Let us assume that $a=0$ and for some $b,d$  and $H$ with $|b|=|d|=1$, the function $g(t)>0$ is defined near $t=0$ and satisfies the differential equation  (\ref{theode}) with $C=0$. Let us  also assume that we can select orthogonal vectors  $u_2$, $u_3$ in $\sf$ with  $\<u_2,u_2\>=b$ and $\<u_3,u_3\>=d$. Let us consider $S=\{y\in \sf :  \<y,u_2\>=\<y,u_3\>=0\}$
and $\alpha(t),\beta(t)\in \sf$ a solution of the system of ordinary differential equations

\begin{eqnarray*}
& &\alpha^{\prime\prime}(t)=bd\kappa_1(t)\beta(t)\, \quad \beta^\prime(t)=-\kappa_2(t)\alpha^\prime(t)\\
& & \alpha(0)={\bf 0},\quad \beta(0)=u_2,\quad \alpha^\prime(0)=u_3
\end{eqnarray*}

The immersion

$$\phi(y,t)=y+\alpha(t)+\frac{d}{2g(t)}\, \<y,y\> \rho_0$$

with 

$$\rho_0= -g^\prime(0) u_3+bd g(0)\kappa_1(0)u_2$$

has constant mean curvature $H$ and
 principal curvatures $\kappa_1(t)$ with multiplicity $n-1$ and $\kappa_2(t)$ with multiplicity $1$.
 
 \end{thm}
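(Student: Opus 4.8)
The plan is to follow the proof of Theorem \ref{ceq0aneq0} essentially verbatim, keeping track only of the simplifications forced by $a=0$. Since $SF(n+1,0,0)=\mathbf{R}^{n+1}_{0}$ is already a vector space there is no need to pass to the enlarged ambient $\lf$; every term carrying a factor $a$ disappears; and the vector $u_1$, which in the $a\neq 0$ case obeyed $\langle u_1,u_1\rangle=a$, degenerates to $u_1=\mathbf{0}$, so that $\alpha(0)=\mathbf{0}$. The one genuinely new feature is that $\alpha$ is no longer a unit-type vector, so I will only establish orthonormality for the pair $(\beta,\alpha')$ rather than for a triple.

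First I would analyze the linear system. The equations $\alpha''=bd\kappa_1\beta$ and $\beta'=-\kappa_2\alpha'$ close up in $(\beta,\alpha')$, so the Gram functions $\langle\beta,\beta\rangle,\langle\beta,\alpha'\rangle,\langle\alpha',\alpha'\rangle$ satisfy a first order ODE having $(b,0,d)$ as a constant solution (this is the $\gamma_{22},\gamma_{23},\gamma_{33}$ computation of Theorem \ref{ceq0aneq0} restricted to $a=0$, and it uses $b^2=1$). Hence $\beta(t),\alpha'(t)$ remain orthogonal with $\langle\beta,\beta\rangle=b$ and $\langle\alpha',\alpha'\rangle=d$. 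By uniqueness, $\alpha'(t),\beta(t)\in\mathrm{Span}\{u_2,u_3\}$, and since $\alpha(0)=\mathbf{0}$ also $\alpha(t)=\int_0^t\alpha'\in\mathrm{Span}\{u_2,u_3\}$; together with $\rho_0\in\mathrm{Span}\{u_2,u_3\}$ this gives that $\alpha,\beta,\alpha',\rho_0$ are all orthogonal to $S$. I would then record $\langle\rho_0,\beta\rangle=d\,g\kappa_1$ and $\langle\rho_0,\alpha'\rangle=-d\,g'$, and the crucial null identity $\langle\rho_0,\rho_0\rangle=d\,g'(0)^2+b\,g(0)^2\kappa_1(0)^2=0$, whose vanishing is exactly equation (\ref{theode}) with $a=0$, $C=0$ evaluated at $t=0$, namely $(g')^2=-bd\,g^2\kappa_1^2$. (One may also define $\rho(t)=-g'\alpha'+bd\,g\kappa_1\beta$ and check $\rho'\equiv 0$ via Lemma \ref{lk1k2}, so that $\rho(t)=\rho_0$; this is the $a=0$ specialization of the constancy of $\rho$ in Theorem \ref{ceq0aneq0}.)

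With these facts in hand the immersion and curvature computations are formally identical to the $a\neq 0$ case. Choosing an orthogonal basis $E_1,\dots,E_{n-1}$ of the nondegenerate plane-complement $S$, differentiation of $\phi$ gives $\partial\phi/\partial y_i=E_i+\tfrac{d}{g}\langle y,E_i\rangle\rho_0$ and $\phi_t=\alpha'-\tfrac{dg'}{2g^2}\langle y,y\rangle\rho_0$; since $\rho_0$ is null and orthogonal to each $E_i$ one obtains $\langle\partial\phi/\partial y_i,\partial\phi/\partial y_j\rangle=\langle E_i,E_j\rangle$, and the auxiliary field $\xi=\phi_t+\tfrac{g'}{g}\sum_i y_i\,\partial\phi/\partial y_i$ satisfies $\langle\xi,\xi\rangle=d$ and $\langle\xi,\partial\phi/\partial y_i\rangle=0$, whence the $n$ tangent vectors are independent and $\phi$ is an immersion. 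Next I would verify that $\nu=\kappa_1\alpha+\beta-\kappa_1\phi=\beta-\kappa_1 y-\tfrac{d\kappa_1}{2g}\langle y,y\rangle\rho_0$ is a unit normal, i.e. $\langle\nu,\nu\rangle=b$, $\nu\perp\partial\phi/\partial y_i$ and $\nu\perp\xi$. Differentiating $\nu$ yields $\partial\nu/\partial y_i=-\kappa_1\,\partial\phi/\partial y_i$, so $\kappa_1$ is a principal curvature of multiplicity $n-1$, and $d\nu(\xi)=-\kappa_2\,\xi$ (using $(g\kappa_1)'=\kappa_2 g'$ from Lemma \ref{lk1k2}), so $\kappa_2$ has multiplicity one. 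Finally $(n-1)\kappa_1+\kappa_2=nH$ gives constant mean curvature $H$.

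The main obstacle is bookkeeping rather than conceptual. Two points require care. First, with $a=0$ the only orthonormality available is that of the pair $(\beta,\alpha')$, and every cancellation that in the $a\neq 0$ proof was produced by $a^2=1$ must here instead be produced by the null condition $\langle\rho_0,\rho_0\rangle=0$; this is what keeps the cross terms in $\langle\partial_i\phi,\partial_j\phi\rangle$, $\langle\xi,\xi\rangle$ and $\langle\nu,\nu\rangle$ from spoiling the desired values. Second, the single identity that makes the principal-curvature computation close is $(g\kappa_1)'=\kappa_2 g'$, so I would watch carefully which of $\kappa_1,\kappa_2$ enters each differential equation, since the constancy of $\rho$ and the relation $d\nu(\xi)=-\kappa_2\xi$ must both emerge from it and this is the one place where an index slip would be easy to miss.
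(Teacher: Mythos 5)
Your proposal follows the paper's proof of this theorem essentially step for step (the paper's own proof is a compressed version of the argument for Theorem \ref{ceq0aneq0}, and you fill in exactly the details it defers to that proof: the Gram-function equilibrium, the constancy of $\rho$, the null identity $\<\rho_0,\rho_0\>=0$, the fields $\xi$ and $\nu$, and the two principal-curvature computations). However, there is one concrete point where your asserted computations do not close, and it is precisely the point you flagged as ``the one place where an index slip would be easy to miss'' and then did not check. For the system exactly as stated, $\alpha''=bd\kappa_1\beta$, $\beta'=-\kappa_2\alpha'$, the triple $(b,0,d)$ is \emph{not} an equilibrium of the Gram system: at $\<\beta,\beta\>=b$, $\<\beta,\alpha'\>=0$, $\<\alpha',\alpha'\>=d$ one gets
\begin{equation*}
\frac{d}{dt}\<\beta,\alpha'\>=\<\beta',\alpha'\>+\<\beta,\alpha''\>=-\kappa_2 d+bd\kappa_1 b=d(\kappa_1-\kappa_2)=d\,n\,g^{-n}\neq 0,
\end{equation*}
so orthonormality of $(\beta,\alpha')$ is destroyed instantly. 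Likewise, with this system and Lemma \ref{lk1k2} one finds
\begin{equation*}
\rho'(t)=\bigl(-g''-bdg\kappa_1\kappa_2\bigr)\alpha'+bd(\kappa_2-\kappa_1)g'\beta=bd(\kappa_2-\kappa_1)g'\beta\neq 0,
\end{equation*}
so $\rho$ is not constant either, and both pillars of the argument collapse.

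The repair is to replace $\kappa_1$ by $\kappa_2$ in the second-order equation: $\alpha''=bd\kappa_2\beta$ (and $\alpha''=bd\kappa_2\beta-ad\alpha$ in the $a\neq0$ theorems). This is what the geometry dictates, since $\alpha$ models the integral curve of $e_n$ and $\beta$ models $\nu$ along it, and Theorem \ref{local relations}(v) gives $\bar{\nabla}_{e_n}e_n=bd\mu\nu-adx$ with $\mu=\kappa_2$ the multiplicity-one curvature, while $\bar{\nabla}_{e_n}\nu=-\mu e_n$ gives $\beta'=-\kappa_2\alpha'$. With $\kappa_2$ in both equations, every line you wrote goes through verbatim: the Gram system has $(b,0,d)$ as equilibrium (the cross term becomes $-\kappa_2 d+bd\kappa_2 b=0$), $\rho'\equiv 0$, and the rest of your immersion, Gauss-map and curvature computations are correct. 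To be fair, this index error originates in the theorem statement itself (and recurs in the paper's other immersion theorems), so you inherited it; but a complete proof must either correct the system or note that the stated one cannot support the claimed orthogonality, rather than asserting the equilibrium and $\rho'\equiv0$ for the $\kappa_1$-system, for which both are false.
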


%proof case a=0, C=0
\begin{proof}
 We can show that  the functions $\gamma_{22}=\<\beta(t),\beta(t)\>$, $\gamma_{23}=\<\beta(t),\alpha^\prime(t)\>$ and $\gamma_{33}=\<\alpha^\prime(t),\alpha^\prime(t)\>$ satisfy a first order system of ordinary differential equations with $\gamma_{22}(t)=b$, $\gamma_{23}(t)=0$ and $\gamma_{33}(t)=d$ an equilibrium solution. Therefore, $\beta(t)$, $\alpha^\prime(t)$ are perpendicular and   $\<\beta(t),\beta(t)\>=b$ and $\<\alpha^\prime(t),\alpha^\prime(t)\>=d$. A direct computation shows that if 
 
 $$\rho(t)= -g^\prime(t) \alpha^\prime(t)+bd g(t)\kappa_1(t)\beta(t)$$
 
 then $\rho^\prime(t)$ vanishes and $\rho(t)=\rho_0$. Therefore,

\begin{eqnarray}\label{inner products}
 \<\rho_0,\beta(t)\>= \<\rho(t),\beta(t)\>=d\,\kappa_1(t)\,  g(t),\quad \<\rho_0,\alpha\, ^\prime(t)\>=\<\rho(t),\alpha\, ^\prime(t)\>=-d\, g^\prime(t)
\end{eqnarray}

We have that 

$$\phi_t=-\frac{d g^\prime(t)}{2 g(t)^2} \, \<y,y\>\, \rho_0+\alpha\, ^\prime(t) $$

and, if we define the coordinates $y_1,\dots, y_{n-1}$ on $S$ by the equation $y=\sum_{i=1}^{n-1}y_iE_i$ where $E_1,\dots, E_{n-1}$ form an orthogonal basis of $S$, then,  for $i=1,\dots, n-1$ we have 

$$\frac{\partial \phi}{\partial y_i}(y,t)= \frac{d}{g(t)}\, \<y,E_i\>\, \rho_0+ E_i $$

As in the proof of the previous theorem, we can show that   $\phi$ is an immersion.  Also we can show that if 

$$\xi=\frac{\partial \phi}{\partial t}+\frac{g^\prime}{g}\, \sum_{i=1}^{n-1}y_i\, \frac{\partial \phi}{\partial y_i}\, =\, \frac{dg^\prime\<y,y\>}{2g^2}\, \rho_0+\alpha\, ^\prime+\frac{g^\prime}{g}\, y$$ 
 
 and
 
$$\nu=\kappa_1(t)\,  \alpha(t)+\beta(t)-\kappa_1(t)\, \phi(y,t)=\beta+\frac{d\kappa_1(t)}{2g} \<y,y\>\, \rho_0-\kappa_1(t) y$$

then, $\<\nu,\nu\>=b$, $\<\nu,\phi\>=0$, $\<\nu,\frac{\partial \phi}{\partial t}\>=0$ and $\<\nu,\frac{\partial \phi}{\partial y_i}\>=0$ for $i=1,\dots, n-1$. Therefore, $\nu$ is a Gauss map of the immersion $\phi$. By the definition of $\nu$ it easily follows that  $\frac{\partial \nu}{\partial y_i} =-\kappa_1 \frac{\partial \phi}{\partial y_i}$, therefore $\kappa_1$ is a principal curvature of $\phi$ with multiplicity $n-1$. On the other hand arguing in the same way as  in the previous theorem we can show that $d\nu(\xi)=-\kappa_2\, \xi$. Therefore, $\kappa_2$ is a principal curvature with multiplicity 1. Since $(n-1) \kappa_1+\kappa_2=n H$ we conclude that $\phi$ has constant mean curvature and the theorem follows.

\end{proof}

%
% case C\ne 0 and a\ne 0
%

\subsection{Immersion with $C\ne0$ and $a\ne 0$} This is the theorem when $C\ne 0$ and $a\ne0$.

\begin{thm}\label{cneq0aneq0} Let us assume that  for some $a,b,d$  and $H$ with $|a|=|b|=|d|=1$, the function $g(t)>0$ is defined near $t=0$ and satisfies the differential equation  (\ref{theode}) with $C\ne0$. Let us  also assume that we can select orthogonal vectors $u_1$, $u_2$, $u_3$ in $\lf$ with $\<u_1,u_1\>=a$, $\<u_2,u_2\>=b$ and $\<u_3,u_3\>=d$. Let us consider $S= \{y=y_1E_1+\dots y_{n} E_{n}: \<y,y\> = d\frac{|C|}{C}\}$ where 

$$E_n=\rho_0= -g^\prime(0) u_3+bd g(0)\kappa_1(0)u_2-da g(0) u_1$$

and $\{E_1\dots E_{n-1}\}$ form an orthogonal basis of $\{u:\<u,u_1\>=\<u,u_2\>=\<u,u_3\>=0\}$,
and let us consider $\alpha(t),\beta(t)\in \lf$

a solution of the system of ordinary differential equations

\begin{eqnarray*}
& &\alpha^{\prime\prime}(t)=bd\kappa_1(t)\beta(t)-ad\alpha(t)\, \quad \beta^\prime(t)=-\kappa_2(t)\alpha^\prime(t)\\
& & \alpha(0)=u_1,\quad \beta(0)=u_2,\quad \alpha^\prime(0)=u_3
\end{eqnarray*}

The immersion $\phi:S\times I\longrightarrow \sf$ given by

$$\phi(y,t)= \frac{g(t)}{\sqrt{|C|}}\, y+ \alpha(t)+\frac{g(t)}{C}\, \rho_0 $$

has constant mean curvature $H$ and
 principal curvatures $\kappa_1(t)$ with multiplicity $n-1$ and $\kappa_2(t)$ with multiplicity $1$.
 
\end{thm}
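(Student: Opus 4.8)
The plan is to follow the same scheme as the proofs of Theorems \ref{ceq0aneq0} and \ref{ceq0aeq0}, adapting it to the two new features of the $C\ne 0$ case: the vector $\rho_0$ is no longer null, and the domain $S$ is a pseudo-sphere rather than a linear subspace, with the $y$-term now scaled by the $t$-dependent factor $g(t)/\sqrt{|C|}$. The whole argument will reduce to checking that, thanks to the single identity $\langle\rho_0,\rho_0\rangle=dC$, all the extra terms produced by the curvature of $S$ and by this scaling cancel in pairs.

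First I would reproduce the Gram-matrix argument: writing the first-order system satisfied by the six functions $\langle\alpha,\alpha\rangle,\langle\alpha,\beta\rangle,\dots,\langle\alpha',\alpha'\rangle$, I would check that $(a,0,0,b,0,d)$ is an equilibrium, so that $\alpha,\beta,\alpha'$ stay mutually orthogonal with $\langle\alpha,\alpha\rangle=a$, $\langle\beta,\beta\rangle=b$, $\langle\alpha',\alpha'\rangle=d$ for all $t$. Using Lemma \ref{lk1k2} I would then verify that $\rho(t)=-g'\alpha'+bdg\kappa_1\beta-dag\alpha$ is constant, hence equal to $\rho_0$, which yields $\langle\rho_0,\alpha\rangle=-dg$, $\langle\rho_0,\beta\rangle=d\kappa_1 g$, $\langle\rho_0,\alpha'\rangle=-dg'$. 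The genuinely new computation is $\langle\rho_0,\rho_0\rangle$: expanding it and substituting the defining ODE (\ref{theode}) for $(g')^2$ collapses everything to $\langle\rho_0,\rho_0\rangle=dC$, the analogue of Theorem \ref{local relations}(xiii). This value is what drives every later cancellation.

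Next I would record the tangent vectors. Since $\alpha$ and $\rho_0$ are independent of $y$, for $v\in T_yS$ (that is, $v\in\mathrm{Span}\{E_1,\dots,E_n\}$ with $\langle v,y\rangle=0$) one has $d\phi(v)=\frac{g}{\sqrt{|C|}}\,v$, while $\phi_t=\frac{g'}{\sqrt{|C|}}\,y+\alpha'+\frac{g'}{C}\rho_0$. I would first check $\langle\phi,\phi\rangle=a$, so that $\phi$ maps into $\sf$. The key point for the immersion is that, because $\langle v,y\rangle=0$ on $S$ and $\langle\rho_0,\rho_0\rangle=dC$, the $\alpha'$ and $\rho_0$ contributions to $\langle\phi_t,v\rangle$ cancel, so $\phi_t$ is \emph{automatically} orthogonal to all of $d\phi(T_yS)$; a parallel expansion gives $\langle\phi_t,\phi_t\rangle=d$. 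Thus no correction term to $\phi_t$ is needed here, in contrast with the null case. Since $\langle y,y\rangle=d|C|/C\ne 0$ makes $y$ non-null, the orthogonal complement $T_yS$ carries a nondegenerate metric, and together with the non-null $\phi_t$ the $n$ tangent vectors are linearly independent, so $\phi$ is an immersion.

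Then I would take the candidate Gauss map $\nu=\kappa_1\alpha+\beta-\kappa_1\phi$ and verify $\langle\nu,\phi\rangle=0$, $\langle\nu,\nu\rangle=b$, and $\langle\nu,d\phi(v)\rangle=\langle\nu,\phi_t\rangle=0$, all by the same expansions, again using $\langle\rho_0,\rho_0\rangle=dC$ and $\langle y,y\rangle=d|C|/C$ to force the $y_n$-dependent and the $g^2/C$ terms to vanish in pairs. Differentiating in the $S$-directions gives $d\nu(v)=-\kappa_1\,d\phi(v)$, so $\kappa_1$ is a principal curvature of multiplicity $n-1$; differentiating in $t$, with $(g\kappa_1)'=\kappa_2 g'$ from Lemma \ref{lk1k2}(2) and $\beta'=-\kappa_2\alpha'$, yields $\partial_t\nu=-\kappa_2\,\phi_t$, so $\kappa_2$ is a principal curvature of multiplicity one. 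Finally $(n-1)\kappa_1+\kappa_2=nH$ gives constant mean curvature. I expect the main obstacle to be purely the bookkeeping of the interaction between the pseudo-sphere constraint $\langle v,y\rangle=0$ and the scaling $g/\sqrt{|C|}$: every inner product now carries a term in $y_n$ (the component of $y$ along $E_n=\rho_0$) and a term in $g^2/C$, and one must keep track that these always cancel, so verifying $\langle\rho_0,\rho_0\rangle=dC$ carefully is the crux.
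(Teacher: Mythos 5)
Your proposal is correct and follows essentially the same approach as the paper's proof: the Gram-matrix argument for $\alpha,\beta,\alpha'$, the constancy of $\rho(t)$, the identity $\<\rho_0,\rho_0\>=dC$, the verification that $\phi$ lands in $\sf$ and is an immersion, the Gauss map $\nu=\kappa_1\alpha+\beta-\kappa_1\phi$, and the computations $d\nu(v)=-\kappa_1\, d\phi(v)$ and $\nu_t=-\kappa_2\phi_t$ via Lemma \ref{lk1k2}. The only cosmetic difference is that the paper packages the orthogonality cancellations through the fixed plane $\Pi=\{v\in \hbox{Span}(u_1,u_2,u_3):\<v,\rho_0\>=0\}$ containing the curves $\xi=\alpha+\frac{g}{C}\rho_0$ and $\varphi=\beta-\frac{\kappa_1 g}{C}\rho_0$, whereas you carry out the same cancellations by direct expansion.
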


\begin{proof} 
A direct computation shows that $\<\rho_0,\rho_0\>=d C$. Using an argument as in Theorem (\ref{ceq0aneq0}), we can show that $\alpha(t)$, $\alpha\, ^\prime(t)$,  and $\beta(t)$ are perpendicular and $\<\alpha(t),\alpha(t)\>=a$, $\<\alpha^\prime(t),\alpha^\prime(t)\>=d$ and $\<\beta(t),\beta(t)\>=b$. We have that 

$$\phi_t=\frac{g^\prime(t)}{\sqrt{|C|}} \, y+\alpha^\prime(t)+\frac{g^\prime(t)}{C}\, \rho_0$$

The theorem will follow by showing first, that

$$\nu=\frac{-\kappa_1 g}{\sqrt{|C|}}\, y+ \beta-\frac{\kappa_1 g}{C}\,  \rho_0$$

is a Gauss map of the immersion $\phi$, second, that $\nu_t+\kappa_2 \phi_t$ vanishes and third, that for every $v\in T_yS$ we have that 
$d\nu(v)+\kappa_1 v$ also vanishes where $d\nu$ denotes the differential of the map $\nu$. Therefore we would  have that $\kappa_1$ is a principal curvature of $\phi$ with multiplicity $n-1$ and $\kappa_2$ is a 
principal curvature of $\phi$ with multiplicity $1$.

Let us work all the details. As in Theorem \ref{ceq0aneq0} we have that for all $t$,

$$\rho(t)=-g^\prime(t) \, \alpha\, ^\prime(t)+b d \kappa_1(t) g(t)\, \beta(t)- d a g(t)\, \alpha(t)=\rho_0$$

Therefore, we  have that

$$\<\rho_0,\alpha(t)\>=\<\rho(t),\alpha(t)\>=-d\, g(t)\com{and} \<\rho_0,\alpha\, ^\prime(t)\>=\<\rho(t),\alpha\, ^\prime(t)\>=-d\, g^\prime(t)$$

Let us define $\Pi=\{v\in \hbox{Span}(u_1,u_2,u_3)\, : \, \<v,\rho_0\>=0\}$ and $\xi(t)= \alpha(t)+\frac{g(t)}{C}\, \rho_0$. Since $\alpha(t)$, $\alpha^\prime(t)$ and $\beta(t)$ are contained in $Span(u_1,u_2,u_3)$ then $\xi(t)$ is contain in  $Span(u_1,u_2,u_3)$. Since $\<\xi(t),\rho_0\>=0$ then, 
$\xi(t)$ is contained in $\Pi$ and moreover, this plane is perpendicular to the 
$n$-dimensional space spanned by $\{E_1,\dots,E_n\}$.
Since $\phi= \frac{g(t)}{\sqrt{|C|}}\, y+\xi(t)$ then 

$$\<\phi,\phi\>=\<\frac{g(t)}{\sqrt{|C|}}\, y,\frac{g(t)}{\sqrt{|C|}}\, y\>+\<\xi,\xi\>=\frac{g^2(t)}{|C|}d \frac{|C|}{C}+a+\frac{g^2(t)}{C^2}d C-2\frac{g(t)}{C} \, d\, g(t) \ =a$$

 Therefore, we indeed have that $\phi\in \sf$. Since $\xi(t)\in \Pi$ then $\xi^\prime(t)\in\Pi$. A similar computation shows that $\<\phi_t,\phi_t\>=d$. For any $v\in T_yS$ we have that  $d\phi(v)=\frac{g(t)}{\sqrt{|C|}}\, v$, where $d\phi(v)$ is  the differential  of $\phi$ in the direction of $v$. Clearly $\<d\phi(v),\phi_t\>=0$. Recall if $v\in T_yS$, then $\<y,v\>=0$. Therefore, $\phi$ is an immersion. Notice that $\nu=\frac{-\kappa_1 g}{\sqrt{|C|}}\, y+ \varphi(t)$ where 
 $\varphi= \beta-\frac{\kappa_1 g}{C}\,  \rho_0$. A direct computation shows that 
 
 $$\<\varphi(t),\rho_0\>=\<\beta(t)-\frac{\kappa_1(t) g(t)}{C}\, \rho(t) ,\rho(t)\>=d\, \kappa_1(t)\, g(t)- \frac{\kappa_1(t) g(t)}{C}\, d\, C =0$$
 
 Therefore $\varphi(t)$ is the plane $\Pi$.  In the same way we show that $\<\phi,\phi\>=a$, we can show that $\<\nu,\nu\>=b$. A direct computation shows that 
$\<\varphi,\xi\>=\frac{\kappa_1\, d\, g^2}{C}$ and $\<\varphi,\xi^\prime\>=\frac{\kappa_1\, d\, g\, g^\prime}{C}$. Since we know that $\varphi$ and $\xi$ are in $\Pi$ and $S$ is the orthogonal complement of $\Pi$, then it is not difficult to check that $\<\nu,\phi\>=0$ and $\<\nu, \phi_t\>=0$. By noticing that 
for any $v\in T_yS$, we have that  $\<\nu,v\>=0$, we conclude that $\nu$ is a Gauss map of the immersion $\phi$. We can easily see that for any $v\in T_yS$ we have that

$$d\nu(v)=-\frac{\kappa_1 g}{\sqrt{|C|}}\, v=-\kappa_1d\phi(v)$$

Moreover, Lemma \ref{lk1k2} give us that $(\kappa_1 g)^\prime=\kappa_2 g^\prime$, therefore,

$$ \nu_t=-\frac{\kappa_2\, g^\prime}{\sqrt{|C|}} \, y+\beta^\prime-\frac{\kappa_2 g}{C}\,  \rho_0=-\frac{\kappa_2\, g^\prime}{\sqrt{|C|}} \, y-\kappa_2\alpha\, ^\prime-\frac{\kappa_2 g}{C}\,  \rho_0=-\kappa_2\, \phi_t$$

The previous computations shows that  $\kappa_1$ is a principal curvature with multiplicity $n-1$ and $\kappa_2$ is a principal curvature with multiplicity 1. Since $(n-1) \kappa_1+\kappa_2=n H$ we conclude that $\phi$ has constant mean curvature and the theorem follows.

\end{proof}

%
%
%Case C\ne0 and a=0
%
%

\subsection{Immersion with $C\ne0$ and $a=0$} 
The  case $C\ne0$ and $a=0$ is almost exactly the same as the case $C\ne0$ and $a\ne0$. The only difference is that since $\sf$ is a linear space in this case, then we do not need to use the space $\lf$ and without loss of generality we can use $u_1={\bf 0}=(0,\dots , 0)$.

\begin{thm}\label{cneq0aeq0} Let us assume that  $a=0$ and for some  $b,d$  and $H$ with $|b|=|d|=1$, the function $g(t)>0$ is defined near $t=0$ and satisfies the differential equation  (\ref{theode}) with $C\ne0$. Let us  also assume that we can select orthogonal vectors  $u_2$, $u_3$ in $\sf$ with  $\<u_2,u_2\>=b$ and $\<u_3,u_3\>=d$. Let us consider $S= \{y=y_1E_1+\dots y_{n} E_{n}: \<y,y\> = d\frac{|C|}{C}\}$ where 

$$E_n=\rho_0= -g^\prime(0) u_3+bd g(0)\kappa_1(0)u_2-da g(0) u_1$$

and $\{E_1\dots E_{n-1}\}$ form an orthogonal basis of $\{u:\<u,u_2\>=\<u,u_3\>=0\}$,
and let us consider $\alpha(t),\beta(t)\in \lf$  a solution of the system of ordinary differential equations

\begin{eqnarray*}
& &\alpha^{\prime\prime}(t)=bd\kappa_1(t)\beta(t) \quad \beta^\prime(t)=-\kappa_2(t)\alpha^\prime(t)\\
& & \alpha(0)={\bf 0},\quad \beta(0)=u_3,\quad \alpha^\prime(0)=u_2
\end{eqnarray*}

The immersion $\phi:S\times I\longrightarrow \sf$ given by

$$\phi(y,t)= \frac{g(t)}{\sqrt{|C|}}\, y+ \alpha(t)+\frac{g(t)}{C}\, \rho_0 $$

has constant mean curvature $H$ and
 principal curvatures $\kappa_1(t)$ with multiplicity $n-1$ and $\kappa_2(t)$ with multiplicity $1$.
 
\end{thm}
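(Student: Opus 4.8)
The plan is to verify directly that the proposed $\phi$ is an immersion into $\sf$, that it carries the stated Gauss map, and that $\kappa_1,\kappa_2$ are its principal curvatures, following the template of Theorem \ref{cneq0aneq0} verbatim and simply tracking the simplifications forced by $a=0$. Since the ambient space $SF(n+1,k,0)=\bfR{{n+1}}_k$ is linear, we set $u_1=\mathbf 0$, the term $-da\,g(0)u_1$ drops out of $\rho_0$, and the differential equation for $\alpha$ loses its $-ad\,\alpha$ term, becoming $\alpha''=bd\kappa_1\beta$. First I would record that $\<\rho_0,\rho_0\>=dC$ by a direct computation, exactly as before, and then show that the Gram functions $\gamma_{22}=\<\beta,\beta\>$, $\gamma_{23}=\<\beta,\alpha'\>$ and $\gamma_{33}=\<\alpha',\alpha'\>$ satisfy a first-order ODE system having the constants $(b,0,d)$ as an equilibrium; by uniqueness this forces $\<\beta,\beta\>=b$, $\<\alpha',\alpha'\>=d$ and $\<\beta,\alpha'\>=0$ for all $t$. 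The vector $\alpha$ itself need not be unit here, but it is no longer needed as one of the orthonormal frame vectors, so this causes no difficulty.

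Next I would exhibit the first integral. One checks that $\rho(t)=-g'\alpha'+bd\kappa_1 g\,\beta$ has $\rho'(t)=0$ using Lemma \ref{lk1k2} (the relation $(g\kappa_1)'=\kappa_2 g'$ and $\beta'=-\kappa_2\alpha'$, together with $\alpha''=bd\kappa_1\beta$), so $\rho(t)\equiv\rho_0$. From this constancy I would extract the inner products $\<\rho_0,\alpha'\>=-d\,g'$ and $\<\rho_0,\beta\>=d\,\kappa_1 g$ by pairing $\rho(t)$ against $\alpha'$ and $\beta$ and using the orthonormality just established. These are the only inner-product identities the rest of the proof consumes.

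Then I would run the immersion-and-curvature computation. Writing $\phi=\frac{g}{\sqrt{|C|}}\,y+\xi(t)$ with $\xi=\alpha+\frac{g}{C}\rho_0$, the point $y$ ranges over $S=\{\<y,y\>=d\frac{|C|}{C}\}$, and the coordinate derivatives are $d\phi(v)=\frac{g}{\sqrt{|C|}}\,v$ for $v\in T_yS$ together with $\phi_t=\frac{g'}{\sqrt{|C|}}y+\alpha'+\frac{g'}{C}\rho_0$. Since $a=0$ there is no constraint $\<\phi,\phi\>=a$ to verify; I only need $\<\phi_t,\phi_t\>=d$ and $\<d\phi(v),\phi_t\>=0$, which follow from the inner products above and from $\<y,v\>=0$ on $S$, giving that $\phi$ is an immersion. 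I then propose the Gauss map $\nu=\frac{-\kappa_1 g}{\sqrt{|C|}}\,y+\beta-\frac{\kappa_1 g}{C}\rho_0$ and check $\<\nu,\nu\>=b$, $\<\nu,\phi_t\>=0$ and $\<\nu,d\phi(v)\>=0$; the latter uses that $\beta-\frac{\kappa_1 g}{C}\rho_0$ lies in the $\rho_0$-orthogonal plane and hence is orthogonal to $S$. Finally, differentiating $\nu$ gives $d\nu(v)=-\kappa_1\,d\phi(v)$ directly, so $\kappa_1$ is a principal curvature of multiplicity $n-1$, and $\nu_t=-\kappa_2\phi_t$ follows from $(\kappa_1 g)'=\kappa_2 g'$ and $\beta'=-\kappa_2\alpha'$, so $\kappa_2$ is a principal curvature of multiplicity $1$. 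The cmc relation $(n-1)\kappa_1+\kappa_2=nH$ then closes the argument.

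The only genuinely different point from Theorem \ref{cneq0aneq0} is bookkeeping: with $a=0$ the vector $\alpha$ is no longer unit-length, so I must be careful that no step silently used $\<\alpha,\alpha\>=a$. Since the whole curvature computation depends on $\alpha$ only through $\alpha'$ and through the integral $\rho_0$, and the orthogonality of $\beta-\frac{\kappa_1 g}{C}\rho_0$ to $\rho_0$ is still exactly $\<\varphi,\rho_0\>=0$, I do not expect any obstruction; the anticipated main task is merely confirming that the equilibrium of the Gram ODE system still exists once the $\alpha$-row and $\alpha$-column are deleted. This is routine, so the proof should reduce to the sentence that the argument is identical to that of Theorem \ref{cneq0aneq0} after setting $u_1=\mathbf 0$, with the verifications above supplying the modest details.
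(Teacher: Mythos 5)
Your proposal is correct and follows essentially the same route as the paper: the paper's own proof of this theorem is a three-line deferral to Theorem \ref{cneq0aneq0}, listing exactly the verifications you carry out --- that $\nu=\frac{-\kappa_1 g}{\sqrt{|C|}}\,y+\beta-\frac{\kappa_1 g}{C}\,\rho_0$ is a Gauss map with $\langle\nu,\nu\rangle=b$, that $\nu_t=-\kappa_2\,\phi_t$, and that $d\nu(v)=-\kappa_1\,d\phi(v)$ on $T_yS$ --- and your $a=0$ bookkeeping (taking $u_1=\mathbf{0}$, deleting the $\alpha$-row and $\alpha$-column of the Gram system, retaining only $\langle\rho_0,\alpha'\rangle=-d\,g'$ and $\langle\rho_0,\beta\rangle=d\,\kappa_1 g$, and noting that $\langle\phi,\phi\rangle=a$ is vacuous) is precisely the intended reduction. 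One caveat, which originates in the printed statement and affects the paper's proof identically rather than being a gap of yours: the ODE must read $\alpha''=bd\,\kappa_2\,\beta$ (not $bd\,\kappa_1\,\beta$), since otherwise $\rho'=bd\,g'(\kappa_2-\kappa_1)\,\beta\neq 0$ and $(b,0,d)$ fails to be an equilibrium of the Gram system, and the initial conditions $\beta(0)=u_3$, $\alpha'(0)=u_2$ must be interchanged to be consistent with the formula for $\rho_0$ and with $\langle\beta,\beta\rangle=b$, $\langle\alpha',\alpha'\rangle=d$; with these purely typographical corrections (which your uniqueness/equilibrium argument implicitly presupposes), every step you list goes through.
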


\begin{proof} The proof follows the lines of those in the proof of Theorem \ref{cneq0aneq0}. We can show that:  (i)

$$\nu=\frac{-\kappa_1 g}{\sqrt{|C|}}\, y+ \beta-\frac{\kappa_1 g}{C}\,  \rho_0$$

 is a Gauss map of the immersion and $\<\nu,\nu\>=b$ (ii)  $d\nu(\phi_t)=-\kappa_2(t) \phi_t$ and (iii) for any $v\in T_yS$, $d\nu(d\phi_{(y,t)}(v))=-\kappa_1d\phi_{(y,t)}(v)$.

\end{proof}

\subsection{Examples with constant principal curvatures}

\begin{thm} Let $f(v)=C-dv^2\left(a+b(H+v^{-n})^2\right)$. If $v_0>0$ satisfies that $f(v_0)=f^\prime(v_0)=0$, then the immersions given in Theorems \ref{ceq0aeq0}, \ref{ceq0aeq0}, \ref{ceq0aneq0} and \ref{cneq0aneq0} obtained by replacing $g(t)$ with $v_0$ define immersions with constant principal curvature $\kappa_1=H+v_0^{-n}$ with multiplicity $n-1$ and $\kappa_2=H-(n-1)v_0^{-n}$ with multiplicity 1.
\end{thm}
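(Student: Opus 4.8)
The plan is to recognize that the differential equation (\ref{theode}) can be written as $(g')^2 = f(g)$, so that the hypothesis $f(v_0)=0$ immediately makes the constant function $g(t)\equiv v_0$ a solution of (\ref{theode}) with $g'\equiv 0$. For this constant $g$, the functions of (\ref{kappas}) become the constants $\kappa_1=H+v_0^{-n}$ and $\kappa_2=H-(n-1)v_0^{-n}$, and the immersions built in the four preceding theorems are defined by exactly the same formulas. The conclusion about constant principal curvatures will then follow once I verify that the proofs of those theorems remain valid for this degenerate (constant) solution.

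The only place where a solution of (\ref{theode}) enters those proofs is through the two identities of Lemma \ref{lk1k2}, so the heart of the argument is to check that both identities survive for $g\equiv v_0$. Identity (2), $(g\kappa_1)'=\kappa_2 g'$, is trivially $0=0$ since $g$ is constant. Identity (1) reads $g''=-dg(a+b\kappa_1\kappa_2)$; as $g''\equiv 0$ here, it is equivalent to $a+b\kappa_1(v_0)\kappa_2(v_0)=0$. To produce this I would differentiate $f$ directly: writing $f(v)=C-dav^2-db\,(v\kappa_1(v))^2$ and using $(v\kappa_1(v))'=H-(n-1)v^{-n}=\kappa_2(v)$ gives
$$f'(v)=-2dv\bigl(a+b\kappa_1(v)\kappa_2(v)\bigr).$$
Since $v_0>0$ and $d=\pm1$, the hypothesis $f'(v_0)=0$ is therefore exactly the statement $a+b\kappa_1(v_0)\kappa_2(v_0)=0$, i.e. Lemma \ref{lk1k2}(1) for the constant solution. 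This is precisely why the extra condition $f'(v_0)=0$ is imposed beyond $f(v_0)=0$: without it the constant function would satisfy the first-order equation but fail the second-order identity on which the immersion proofs rest.

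With both identities of Lemma \ref{lk1k2} secured, I would simply rerun the relevant proof among Theorems \ref{ceq0aneq0}, \ref{ceq0aeq0}, \ref{cneq0aneq0} and \ref{cneq0aeq0}, substituting $g\equiv v_0$ throughout. The systems defining $\alpha$ and $\beta$ become constant-coefficient linear systems; the auxiliary vector $\rho(t)$ is still constant and equal to $\rho_0$ (its derivative vanishes by the same computation, which used only Lemma \ref{lk1k2}); and the verifications that $\phi$ is an immersion, that $\nu$ is a Gauss map, and that $\phi$ has principal curvatures $\kappa_1$ (multiplicity $n-1$) and $\kappa_2$ (multiplicity $1$) all go through verbatim, since none of those steps required $g'\ne 0$. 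Because $\kappa_1$ and $\kappa_2$ are now genuine constants, the resulting hypersurface is isoparametric, which is the assertion. The main, and essentially the only, obstacle is the bookkeeping in the previous paragraph: confirming that the identity normally obtained by dividing the differentiated ODE by $g'$ is recovered, for the constant solution with $g'=0$, precisely by the hypothesis $f'(v_0)=0$.
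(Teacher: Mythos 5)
Your proposal is correct and follows essentially the same route as the paper: the paper's proof likewise observes that the only facts about $g$ used in the immersion theorems are $(g^\prime)^2=f(g)$ and the second-order identity $g^{\prime\prime}=\tfrac{1}{2}f^\prime(g)$ (which is exactly Lemma \ref{lk1k2}(1), since $\tfrac{1}{2}f^\prime(v)=-dv\left(a+b\kappa_1(v)\kappa_2(v)\right)$), and that both hold for the constant function $g\equiv v_0$ precisely because $f(v_0)=f^\prime(v_0)=0$. Your explicit computation of $f^\prime$ and the remark that the hypothesis $f^\prime(v_0)=0$ replaces the usual division by $g^\prime$ make the same point the paper makes, only in slightly greater detail.
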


\begin{proof} We have that the equations for  $g(t)$:

\begin{eqnarray*}
(g^\prime(t))^2&=&C-dv^2\left(a+b(H+g(t)^{-n})^2\right) \\
g^{\prime\prime}(t)&=&-ad g(t)-bd g(t)\left(H+g^{-n}(t)\right) \left(H-(n-1)g^{-n}(t)\right)
\end{eqnarray*}

hold true for $g(t)=v_0$  because the first one can be written as $(g^\prime(t))^2=f(g(t))$  and the second one can be written as $g^{\prime\prime}(t)=\frac{1}{2}\, f^\prime(g(t))$.  Since the equations $(g^\prime(t))^2=f(g(t))$ and $g^{\prime\prime}(t)=\frac{1}{2}\, f^\prime(g(t))$ are the only equations for $g$ used the  in the proofs of Theorems \ref{ceq0aeq0},\ref{ceq0aeq0}, \ref{ceq0aneq0},\ref{cneq0aneq0}, then this theorem follows.
\end{proof}

%%%%%
%%%%%
%Aqui comienza la seccion The immersions.
%%%%%%
%%%%%%

\section{The immersions}\label{section the immersions}

In the previous section we explicitly show immersion in $\sf$ with constant mean curvature and two principal curvatures. In this section we show that those examples are essentially all possible examples. The form of the hypersurfaces changes for $C\ne0$ and $C=0$. This is due to the fact that when $C\ne0$, the vector field $\eta$ has nonzero norm.

%{\color{red} add a comment} 

\subsection{The case $C\ne0$}

\begin{thm}\label{the immersions 1 intro}
Let $\sf$ be a space form with sectional curvature $a=1$, $a=-1$ or $a=0$ and let $M\subset \sf$ be a complete connected hypersurface with constant mean curvature, two non-constant principal curvatures  and such that the constant $C$ associated to $M$ is not zero. Assume that $n>2$. If $x_0\in M$, then,  $M$ agrees with the immersion  $\phi:S\times  \mathbb{R} \longrightarrow \sf$ given by 

$$\phi(y,t)= \frac{g(t)}{\sqrt{|C|}}\, y+ \gamma(t)+\frac{g(t)}{C}\, \rho_0 $$

where $\gamma(t)$ is an integral curve of the vector field $e_n$ satisfying $\gamma(0)=x_0$, $\rho_0=\eta(x_0)$, $g(t)=w(\gamma(t))$ and
$$S=\{y\in \Gamma(x_0)+Span\{\eta(x_0)\}: \<y,y\> = d\frac{|C|}{C}\}$$

\end{thm}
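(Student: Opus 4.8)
The plan is to identify $M$ with the image of the explicit immersion constructed in Theorem \ref{cneq0aneq0} by reconstructing the position vector $x\in M\subset\bfR{N}$ from the orthogonal splitting of the ambient space determined by the plane $\Pi$. I will treat the case $a\ne0$; the case $a=0$ is analogous and is indicated at the end. First, let $\gamma:\R\to M$ be the maximal integral curve of $e_n$ with $\gamma(0)=x_0$, which is defined on all of $\R$ because $M$ is complete, and set $g(t)=w(\gamma(t))>0$. Along $\gamma$ one has $e_n(w)=g'$ and $\lambda=H+w^{-n}=H+g^{-n}$, so Theorem \ref{local relations}(iv) reads $(g')^2=C-dg^2\,(a+b(H+g^{-n})^2)$; that is, $g$ solves (\ref{theode}). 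Choosing $u_1=x_0$, $u_2=\nu(x_0)$, $u_3=e_n(x_0)$ (mutually orthogonal with signs $a,b,d$), the constant $\rho_0$ of Theorem \ref{cneq0aneq0} is exactly $\eta(x_0)$, and Theorem \ref{local relations}(v) says that $\gamma$ and $\nu\circ\gamma$ satisfy the same first–order system $\bar\nabla_{e_n}x=e_n$, $\bar\nabla_{e_n}\nu=-\mu e_n$, $\bar\nabla_{e_n}e_n=bd\mu\nu-adx$ as the curves $\alpha,\beta$ there, with the same initial data. By uniqueness of solutions of ODE the "$\alpha$" of that theorem is precisely $\gamma$, so $\phi$ is a genuine cmc immersion with principal curvatures $\kappa_1=H+g^{-n}$ (multiplicity $n-1$) and $\kappa_2=H-(n-1)g^{-n}$ (multiplicity $1$); it remains to match its image with $M$.

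Second, I would exploit the fixed plane $\Pi=\hbox{span}\{X,Y\}$ of Lemma \ref{the plane--lemma}. Since $C\ne0$ the metric is non-degenerate on $\Pi$, whence $\bfR{N}=\Pi\oplus\Pi^{\perp}$ orthogonally and $\dim\Pi^{\perp}=N-2=n$. A direct check gives $\langle X,v\rangle=\langle Y,v\rangle=0$ for every $v\in\Gamma$, and since $\eta\perp\Pi$ while $\eta\perp\Gamma$ and $\eta\ne0$, the $n$-dimensional space $\Gamma(x)+\hbox{span}\{\eta(x)\}$ is contained in $\Pi^{\perp}$; comparing dimensions, the two coincide, so $\Gamma(x)+\hbox{span}\{\eta(x)\}=\Pi^{\perp}$ is the \emph{same} $n$-plane for every $x\in M$. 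Now $\zeta=x+\tfrac{w}{C}\eta\in\Pi$ by Lemma \ref{zeta}, while $x-\zeta=-\tfrac{w}{C}\eta\in\Pi^{\perp}$, so $x=\zeta(x)+(x-\zeta(x))$ is nothing but the $\Pi\oplus\Pi^{\perp}$ decomposition of the position vector. Because $\langle\eta,\eta\rangle=dC$ by Theorem \ref{local relations}(xiii), the vector $y(x):=-\tfrac{\sqrt{|C|}}{C}\,\eta(x)$ lies on $S=\{y\in\Pi^{\perp}:\langle y,y\rangle=d\tfrac{|C|}{C}\}$, and $x-\zeta(x)=\tfrac{w}{\sqrt{|C|}}\,y(x)$.

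Third, I would assemble the parametrization. By Theorem \ref{local relations}(vi) and Lemma \ref{zeta}, the vectors $\zeta,X,Y,\eta$ are constant along the leaves of $\Gamma$, so every leaf $L$ carries a single center $\zeta|_L\in\Pi$ and a single value $w|_L$, while $y$ restricts to an immersion of $L$ into $S$ through the homothety $\bar\nabla_v\eta=-\tfrac{C}{w}v$. Along $\gamma$ one computes $\zeta(\gamma(t))=\gamma(t)+\tfrac{g(t)}{C}\rho_0$ and $w(\gamma(t))=g(t)$, so for any $x$ on the leaf through $\gamma(t)$,
$$x=\zeta(\gamma(t))+\tfrac{g(t)}{\sqrt{|C|}}\,y(x)=\tfrac{g(t)}{\sqrt{|C|}}\,y(x)+\gamma(t)+\tfrac{g(t)}{C}\,\rho_0=\phi\big(y(x),t\big),$$
which is the desired reconstruction identity $x=\phi(y(x),t)$.

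Finally — and this is the step I expect to be the main obstacle — there remains the global matching: that $x\mapsto(y(x),t)$ is well defined on all of $M$ and that $\phi$ is onto $M$. Here I would run an open–closed argument on the connected manifold $M$. The integrable distribution $\Gamma$ (Theorem \ref{equations}) together with the transverse flow of $e_n$ gives a local product structure, so the set of points of the form $\phi(y,t)$ is open; completeness of $M$ forces each $e_n$-trajectory to be defined for all $t\in\R$ and each leaf to close up into the full umbilic sphere $S$ rather than a proper piece, which gives closedness. Connectedness then yields $M=\phi(S\times\R)$. The case $a=0$ is handled identically, replacing the plane splitting by the constancy of $Z$ (Theorem \ref{local relations}(xiv)), using $\bfR{{n+1}}$ in place of $\bfR{N}$, and invoking Theorem \ref{cneq0aeq0} in place of Theorem \ref{cneq0aneq0}.
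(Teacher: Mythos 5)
Your proposal is correct and follows essentially the same route as the paper's own proof: both arguments rest on the fixed plane $\Pi$ of Lemma \ref{the plane--lemma}, the constancy of $\zeta=x+\frac{w}{C}\,\eta$ and of $w$ along the leaves of $\Gamma$ (Lemma \ref{zeta}, Theorem \ref{equations}), and the same key identity $y=-\frac{\sqrt{|C|}}{C}\,\eta(x)\in S$ yielding $x=\phi(y,t)$, with connectedness of $M$ supplying the global covering by the leaves through $\gamma$ (a point the paper asserts just as briefly as your open--closed sketch). The only slip is listing $\eta$ among the vectors constant along a leaf --- it satisfies $\bar{\nabla}_v\eta=-\frac{C}{w}\,v\neq 0$, as you yourself use one clause later --- but nothing in your argument depends on that misstatement.
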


\begin{proof}
Since $M$ is  non-isoparametric because the principal curvatures are not constant then, $M$ must have a principal curvatures $\mu$ with multiplicity 1 and another principal curvature $\lambda$ with multiplicity $n-1$. See Remark \ref{multiplicities}. By definition of hypersurface with two principal curvatures we have that $\lambda\ne \mu$. We will assume without loss of generality that $\lambda>\mu$.  In this proof we will use the notation introduced in section  \ref{vector fields}. Recall that $x_0$ is a point in $M$. Let $y_0=e_n(x_0)$ and $z_0=\nu(x_0)$ and let $\gamma:R\to M$ be the integral curve of the vector field $e_n$ that satisfies $\gamma(0)=x_0$.  Let us work first the case that $a=\pm1$. Notice that if we consider the plane associate to the hypersurface $M$, see Definition \ref{the plane--def}, then the hypersurface $S$ used in the definition of this immersion $\phi$ is given by 
$S=\{y\, :\, \hbox{$y$ is perpendicular to $\Pi$ and }\, \<y,y\>\, =\, \frac{|C| d}{C}\, \}$. By theorem \ref{equations}, we have the distribution $\Gamma=\{v\in T_xM\, :\, \<v,e_n(x)\>=0\}$ is integrable. For any $t\in \mathbb{R}$, let us denote by $M_t$ the $(n-1)-$dimensional submanifold that integrates the distribution $\Gamma$ and contains $\gamma(t)$. Since $M$ is connected, we have that $M$ is the union of all $M_t$. For any $x_1\in M$, let us consider $t_1$ such that $x_1\in M_{t_1}$ and let us consider a smooth curve $\beta:[0,1]\to M_{t_1}$ such that $\beta(0)=x_1$ and $\beta(1)=\gamma(t_1)$. Since we are assuming that $C$ is not zero,  we can define the vector field
 $\zeta=x+\frac{w}{C}\, \eta$. By theorem \ref{zeta}, we have that $\zeta(\beta(s))$ is constant. Also, by Theorem \ref{equations} we have that $w(\beta(s))$ is constant. Therefore, the equations $\zeta(\beta(0))=\zeta(\beta(1))$ and
 $w(\beta(0))=w(\beta(1))$ can be written as 
 
 $$ x_1+\frac{w(x_1)}{C} \eta(x_1)=\gamma(t_1)+\frac{g(t_1)}{C} \rho_0 \com{and} w(x_1)=g(t_1)$$
 
 In the equation above we are using the fact that  $\eta(\gamma(t_1))=\eta(\gamma(0))=\rho_0$. If we define 
 
 $$y=\frac{\sqrt{|C|}}{g(t_1)}\,\left( x_1-\left(\gamma(t_1)+\frac{g(t_1)}{C} \rho_0\right)\right)=-\frac{\sqrt{|C|}}{g(t_1)}\, \frac{w(x_1)}{C}\,  \eta(x_1)= -\frac{\sqrt{|C|}}{C}\,  \eta(x_1) $$
 
 then, using the fact that $\<\eta,\eta\>=d C$  (see Theorem \ref{local relations}), we get that $\<y,y\>=\frac{|C|d}{C}$. Also, since $y$ is a multiple of $\eta(x_1)$, then $y$ is perpendicular to the plane $\Pi$. It follows that 
 $y\in S$, and from the equation $\frac{g(t_1)}{\sqrt{|C|}}\, y=x_1-(\gamma(t_1)+\frac{g(t_1)}{C} \rho_0)$ we conclude that $x_1=\phi(y,t_1)$. Since $x_1$ was any point in $M$ we conclude that $M$ agrees with the immersion $\phi$. %Recall that a direct computation shows that the immersion $\phi$ has CMC and with two different principal curvatures. 

 Let us now consider the case $a=0$.
Notice that if we define $Z=\lambda w\, e_n+e_n(w)\, \nu$, then by Theorem \ref{local relations} we have that $Z$ is a constant vector, that is, it is independent of the point $x\in M$. In particular $Z=Z(x_0)$ and therefore, the hypersurface $S$ used in the definition of the immersion $\phi$ is given by 
$S=\{y\, :\, \hbox{$y$ is perpendicular to $Z$ and }\, \<y,y\>\, =\, \frac{|C| d}{C}\, \}$. Recall that $Z$ is perpendicular to $\eta(x)$ for all $x$, see Remark \ref{X,Y,Z and eta}. By theorem \ref{equations}, we have the distribution $\Gamma=\{v\in T_xM\, :\, \<v,e_n(x)\>=0\}$ is integrable. For any $t\in {\bf R}$, let us denote by $M_t$ the $(n-1)-$dimensional submanifold that integrates the distribution $\Gamma$ and contains $\gamma(t)$. Since $M$ is complete and connected, we have that $M$ is the union of all $M_t$. For any $x_1\in M$, let us consider $t_1$ such that $x_1\in M_{t_1}$ and let us consider a smooth curve $\beta:[0,1]\to M_{t_1}$ such that $\beta(0)=x_1$ and $\beta(1)=\gamma(t_1)$. Since we are assuming that $C$ is not zero,  we can define the vector field
 $\zeta=x+\frac{w}{C}\, \eta$. By theorem \ref{zeta}, we have that $\zeta(\beta(s))$ is constant. Also, by Theorem \ref{equations} we have that $w(\beta(s))$ is constant. Therefore, the equations $\zeta(\beta(0))=\zeta(\beta(1))$ and
 $w(\beta(0))=w(\beta(1))$ can be written as 
 
 $$ x_1+\frac{w(x_1)}{C} \eta(x_1)=\gamma(t_1)+\frac{g(t_1)}{C} \rho_0 \com{and} w(x_1)=g(t_1)$$
 
 In the equation above we are using the fact that  $\eta(\gamma(t_1))=\eta(\gamma(0))=\rho_0$. If we define

 $$y=\frac{\sqrt{|C|}}{g(t_1)}\,\left( x_1-\left(\gamma(t_1)+\frac{g(t_1)}{C} \rho_0\right)\right)=-\frac{\sqrt{|C|}}{g(t_1)}\, \frac{w(x_1)}{C}\,  \eta(x_1)= -\frac{\sqrt{|C|}}{C}\,  \eta(x_1) $$
 
 then, using the fact that $\<\eta,\eta\>=d C$, we get that $\<y,y\>=\frac{|C|d}{C}$. Also, since $y$ is a multiple of $\eta(x_1)$, then $y$ is perpendicular to the vector  $Z$. It follows that 
 $y\in S$, and from the equation $\frac{g(t_1)}{\sqrt{|C|}}\, y=x_1-(\gamma(t_1)+\frac{g(t_1)}{C} \rho_0)$ we conclude that $x_1=\phi(y,t_1)$. Since $x_1$ was any point in $M$ we conclude that $M$ agrees with the immersion $\phi$.

\end{proof}

\subsection{Case $C=0$ .}

%Let us assume that $M$ satisfies the conditions of this case and let us consider the function $g(t)$, $\kappa_1(t)$, $\kappa_2(t)$,  $\gamma(t)$ and $N(t)$ defined at the end of the previous section. 

This section characterizes all  immersion with cmc on space forms with associated constant $C=0$. 

\begin{thm}\label{the immersions c=0}
Let $\sf$ be a space form with sectional curvature $a=1$, $a=-1$ or $a=0$ and let $M\subset \sf$ be a complete connected hypersurface with constant mean curvature, two non-constant principal curvatures  and such that the constant $C$ associated to $M$ is  zero. Assume that $n>2$. If $x_0\in M$, then,  $M$ agrees with the immersion 
$\phi:S\times  \mathbb{R} \longrightarrow \sf$ given by 

$$\phi(y,t)= y+ \gamma(t)+\frac{d}{2g(t)}\,\<y,y\>\,  \rho_0 $$

where $\gamma(t)$ is an integral curve of the vector field $e_n$ satisfying $\gamma(0)=x_0$, $\rho_0=\eta(x_0)$, $g(t)=w(\gamma(t))$ and
$S= \Gamma(x_0)$.

\end{thm}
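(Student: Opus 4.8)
The plan is to follow the architecture of the proof of the $C\neq0$ case (Theorem~\ref{the immersions 1 intro}), replacing the reconstruction vector field $\zeta=x+\frac{w}{C}\eta$ — which is unavailable when $C=0$ — by a direct integration along the leaves of $\Gamma$. As there, $M$ is non-isoparametric, so by Remark~\ref{multiplicities} the curvature $\lambda$ has multiplicity $n-1$ and $\mu$ has multiplicity one; I take $\lambda>\mu$ and use the notation of Section~\ref{vector fields}. Let $\gamma$ be the integral curve of $e_n$ with $\gamma(0)=x_0$, put $g(t)=w(\gamma(t))$ and $\rho_0=\eta(x_0)$. The first fact to record is that, because $C=0$, parts (vii) and (viii) of Theorem~\ref{local relations} give $\bar{\nabla}_{e_n}\eta=0$ and $\bar{\nabla}_v\eta=-\frac{C}{w}v=0$ for all $v\in\Gamma$; since $\Gamma$ and $e_n$ span $TM$, the field $\eta$ is parallel in $\bfR{N}$, so $\eta\equiv\rho_0$ is a constant vector, and part (xiii) gives $\langle\rho_0,\rho_0\rangle=dC=0$, i.e. $\rho_0$ is null. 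This constancy of $\eta$ is exactly what distinguishes the $C=0$ case.

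Next I would set up the foliation as before. The distribution $\Gamma$ is integrable (Theorem~\ref{equations}); writing $M_t$ for the leaf through $\gamma(t)$, completeness and connectedness give $M=\bigcup_t M_t$, and $\lambda,\mu,w,e_n(w)$ are constant on each leaf, so $w|_{M_t}=g(t)$. Two structural observations drive the argument. First, along $\gamma$ the relations $\bar{\nabla}_{e_n}x=e_n$, $\bar{\nabla}_{e_n}\nu=-\mu e_n$, $\bar{\nabla}_{e_n}e_n=bd\mu\nu-dax$ of part (v) show that $\mathrm{Span}\{x,\nu,e_n\}$ (or $\mathrm{Span}\{\nu,e_n\}$ when $a=0$) is invariant along $\gamma$, so its orthogonal complement $\Gamma(\gamma(t))$ equals the fixed space $\Gamma(x_0)$; this is why the correct parameter set is $S=\Gamma(x_0)$. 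Second, for $v\in\Gamma$ one has $\bar{\nabla}_v x=v$, $\bar{\nabla}_v\nu=-\lambda v$ and, by part (ii), $\bar{\nabla}_v e_n=\frac{e_n(w)}{w}v$; since $\lambda,e_n(w),w$ are constant on a leaf, integrating these along a leaf-curve issued from $\gamma(t)$ expresses $\nu$ and $e_n$ as affine functions of the position, $\nu=\nu(\gamma(t))-\lambda(\cdot-\gamma(t))$ and $e_n=e_n(\gamma(t))+\frac{e_n(w)}{w}(\cdot-\gamma(t))$.

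With these in place the reconstruction is direct. Fix $x_1\in M$, choose $t_1$ with $x_1\in M_{t_1}$, set $q=\gamma(t_1)$, $p=x_1-q$ and $y=p-\frac{d}{2g(t_1)}\langle p,p\rangle\rho_0$. Because $\eta\equiv\rho_0$ is constant and $\eta\perp\Gamma$, the function $\langle x-q,\rho_0\rangle$ is constant along any leaf-curve, so $\langle p,\rho_0\rangle=0$; as $\rho_0$ is null this yields $\langle y,y\rangle=\langle p,p\rangle$ and hence $\phi(y,t_1)=q+p=x_1$. It remains to check $y\in\Gamma(x_0)$. I would choose a smooth $\beta:[0,1]\to M_{t_1}$ with $\beta(0)=q$, $\beta(1)=x_1$ and $\beta'\in\Gamma$, and consider $y(s)=(\beta(s)-q)-\frac{d}{2g(t_1)}\langle\beta(s)-q,\beta(s)-q\rangle\rho_0$, so $y(0)=0$ and $y(1)=y$. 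Differentiating gives $y'(s)=\beta'(s)-\frac{d}{g(t_1)}\langle\beta(s)-q,\beta'(s)\rangle\rho_0$; substituting the affine expressions for $\nu$ and $e_n$ above, together with $\langle\rho_0,q\rangle=-dg(t_1)$, $\langle\rho_0,\nu(q)\rangle=d\lambda g(t_1)$, $\langle\rho_0,e_n(q)\rangle=-d\,e_n(w)$ (read off from $\rho_0=\eta(q)$ and the mutual orthogonality of $q,\nu(q),e_n(q)$), a short computation shows that each of $\langle y'(s),q\rangle$, $\langle y'(s),\nu(q)\rangle$, $\langle y'(s),e_n(q)\rangle$ collapses to $0$. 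Hence $y'(s)\in\Gamma(q)=\Gamma(x_0)$, so $y=y(1)\in\Gamma(x_0)=S$ and $x_1=\phi(y,t_1)$. Since $x_1$ was arbitrary, $M$ agrees with $\phi$; the case $a=0$ is identical, with the $q$-condition simply absent.

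The main obstacle is this last identification of the position of a point on a leaf. In the $C\neq0$ case $\zeta$ and $w$ determine $x_1$ at once, whereas here $\eta$ is constant and the globally constant fields $X,Y,Z,\eta$ all lie in a single degenerate plane that spans only two of the three normal directions $\mathrm{Span}\{x,\nu,e_n\}$ to a leaf; the missing direction — and with it the quadratic correction $\frac{d}{2g(t)}\langle y,y\rangle\rho_0$ — can only be recovered by transporting the frame $\{\nu,e_n\}$ along the leaf and exploiting that $\rho_0$ is null. Checking that the two or three orthogonality relations close up is the heart of the proof.
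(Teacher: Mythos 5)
Your proposal is correct, and its key step is genuinely different from the paper's. Both proofs share the same skeleton: non-isoparametricity forces multiplicities $n-1$ and $1$, the distribution $\Gamma$ integrates to leaves $M_t$ with $M=\bigcup_t M_t$, the span of $\{\gamma(t),\gamma'(t),\nu(\gamma(t))\}$ is constant along $\gamma$ by part (v) of Theorem \ref{local relations}, and the structure functions $\lambda$, $w$, $e_n(w)$ together with the relevant vector fields are constant along leaves. Where you diverge is the reconstruction of a point $x_1\in M_{t_1}$: the paper writes $x_1=y+c_1\gamma(t_1)+c_2\gamma'(t_1)+c_3\nu(\gamma(t_1))$ and solves for $c_1,c_2,c_3$ by pairing $x_1$ against the leaf-invariant fields $X$ and $Z$ and using $\langle x_1,x_1\rangle=a$; this produces a quadratic relation, forces a division by $\kappa_1(t_1)$ patched by a continuity argument when $\kappa_1(t_1)=0$, and requires a separate, more involved computation (with a translation normalization and the auxiliary quantities $\xi_1,\xi_2$) when $a=0$. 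You instead invert the map directly, setting $p=x_1-\gamma(t_1)$ and $y=p-\frac{d}{2g(t_1)}\langle p,p\rangle\rho_0$, so that $\phi(y,t_1)=x_1$ follows immediately from the two facts special to $C=0$, namely $\langle\rho_0,\rho_0\rangle=dC=0$ and $\langle p,\rho_0\rangle=0$ (the latter because the constant vector $\eta\equiv\rho_0$ is orthogonal to $\Gamma$); membership $y\in\Gamma(x_0)$ is then checked by differentiating $y(s)$ along a leaf curve and verifying orthogonality to $q$, $\nu(q)$, $e_n(q)$ via the affine transport formulas $\nu=\nu(q)-\lambda(\cdot-q)$, $e_n=e_n(q)+\frac{e_n(w)}{w}(\cdot-q)$, which are equivalent to the paper's constancy of $X$ and $Y$ on leaves. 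I verified the three inner-product computations and they do collapse to zero as claimed. What your route buys is uniformity (the cases $a=\pm1$ and $a=0$ are handled by one argument, dropping a single orthogonality condition) and the elimination of both the coefficient-solving and the degenerate-case continuity patch; what it costs is nothing essential, since the null vector $\rho_0$ and the leaf-transport formulas you rely on are exactly the ingredients already established in Theorems \ref{equations} and \ref{local relations}.
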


\begin{proof} 
Since $C=0$, using Theorem \ref{local relations} we deduce that the vector field $\eta$ is constant on all $M$. By theorem \ref{equations}, we have the distribution $\Gamma=\{v\in T_xM\, :\, \<v,e_n(x)\>=0\}$ is integrable. For any $t\in \mathbb{R}$, let us denote by $M_t$ the $(n-1)-$dimensional submanifold that integrates the distribution $\Gamma$ and contains $\gamma(t)$. Since $M$ is connected, we have that $M$ is the union of all $M_t$. For any $x_1\in M$, let us consider $t_1$ such that $x_1\in M_{t_1}$ and let us consider a smooth curve $\beta:[0,1]\to M_{t_1}$ such that $\beta(0)=x_1$ and $\beta(1)=\gamma(t_1)$. Using part (v) from Theorem \ref{local relations} we have that $\hbox{Span}\{ \gamma(t), \gamma^\prime(t),\nu(\gamma(t))\}=\hbox{Span}\{\gamma(0),\gamma^\prime(0),\nu(\gamma(0))\}$.

Let consider first the case where $a=\pm 1$. We have that for some $y\in S$ and some constants $c_1$, $c_2$ and  $c_3$ we have 

$$x_1=y+c_1 \gamma(t_1)+c_2\gamma^\prime(t_1)+c_3\nu(\gamma(t_1))$$

Doing the dot product of $x_1$ with itself we obtain that 
\begin{eqnarray}\label{eqx1}
\<x_1,x_1\>=a=\<y,y\>+a c_1^2+dc_2^2+bc_3^2
\end{eqnarray}

Since the vector fields $Z$ and $X$ (see Definition \ref{def function and vector fields}) do not change on $M_{t_1}$ we obtain that 

\begin{eqnarray*}
0&=&\<Z(x_1),x_1\>=\<Z(\gamma(t_1)),x_1\>=dc_2\lambda(\gamma(t_1)) g(t_1)+bc_3g^\prime(t_1)\\
a \lambda(x_1)&=&\<X(x_1),x_1\>=\<X(\gamma(t_1)),x_1\>=ac_1\lambda(\gamma(t_1)) + b c_3
\end{eqnarray*}

From the two equations above, using the fact that $\lambda(\gamma(t_1))=\lambda(x_1)$ we obtain that 

$$  \lambda(\gamma(t_1)) c_1=\lambda(\gamma(t_1))-abc_3\com{and} \lambda(\gamma(t_1)) c_2=-bdc_3\frac{g^\prime(t_1)}{g(t_1)} $$

Let us denote by $ \lambda(\gamma(t_1))$ by $\kappa_1(t_1)$. Multiplying Equation (\ref{eqx1}) by $\kappa(t_1)^2$ and, using  the expressions for $\kappa_1(t_1)c_1$ and $\kappa_1(t_1)c_2$ in terms of $c_3$ that we just found, we obtain,

\begin{eqnarray*} 
\kappa_1(t_1)^2 a&=& \kappa_1(t_1)^2\<y,y\>+a ( \kappa_1(t_1) c_1)^2+d( \kappa_1(t_1) c_2) ^2+b ( \kappa_1(t_1) c_3)^2\\
    &=& \kappa_1^2(t_1) \<y,y\>+a\left(\kappa_1^2(t_1)-2 a b c_3 \kappa_1(t_1)+c_3^2\right)+\frac{dg^\prime(t_1)^2 c_3^2}{g^2(t_1)}+b\kappa_1^2(t_1)c_3^2\\
  &=&   \kappa_1^2(t_1) \<y,y\>+a\kappa_1^2(t_1)-2 b c_3 \kappa_1(t_1)+ \frac{dc_3^2}{g^2(t_1)}\left(adg^2(t_1) +g^\prime(t_1)^2 +bd\kappa_1^2(t_1)g^2(t_1)\right)\\
    &=&   \kappa_1^2(t_1) \<y,y\>+a\kappa_1^2(t_1)-2 b c_3 \kappa_1(t_1)
\end{eqnarray*}

Therefore, $ \kappa_1^2(t_1) \<y,y\>-2 b c_3 \kappa_1(t_1)=0$. Therefore $c_3=\frac{b\kappa_1(t_1) \<y,y\>}{2}$ anytime $\kappa_1(t_1)\ne0$. Using a continuity argument we conclude that $c_3=\frac{b\kappa_1(t_1) \<y,y\>}{2}$  also holds true for points $x_1$ with $\kappa_1(t_1)=0$. Then,

$$c_1=1-a\frac{\<y,y\>}{2},\quad c_2=-d\<y,y\>\frac{g^\prime(t_1)}{2 g(t_1)},\quad c_3=b\kappa_1(t_1)\frac{\<y,y\>}{2}$$

and therefore 

$$x_1= y+ \gamma(t_1)+\frac{d}{2g(t_1)}\,\<y,y\>\,  \eta(\gamma(t_1))= y+ \gamma(t_1)+\frac{d}{2g(t_1)}\,\<y,y\>\,  \rho_0 =\phi(y,t_1)$$

This finish the proof in the case $a=\pm 1$. 

The argument for $a=0$ follows is similar. Let us start by pointing out that when $a=0$ we have that for any $u\in\bfR{{n+1}}$ the hypersurface $\tilde{M}=\{x+u:x\in M\}=u+M$ is also a hypersurface with two principal curvatures and constant mean curvature. For this reason without loss of generality we may assume that $x_0\in W$ where

$$W=\hbox{Span}\{\gamma^\prime(0),\nu(\gamma(0))\}$$

Using part (v) from Theorem \ref{local relations} we have that 

$$\hbox{Span}\{ \gamma(t), \gamma^\prime(t),\nu(\gamma(t))\}=\hbox{Span}\{\gamma(0),\gamma^\prime(0),\nu(\gamma(0))\}=W$$

For any $x_1\in M$, let us consider $t_1$ such that $x_1\in M_{t_1}$ and let us consider a smooth curve $\beta:[0,1]\to M_{t_1}$ such that $\beta(0)=x_1$ and $\beta(1)=\gamma(t_1)$. Since the sum of the vectors spaces $S$ and $W$ is the whole $\bfR{{n+1}}$ we can write
 
$$x_1=y+a_1 \gamma^\prime(t_1)+a_2 \nu(\gamma(t_1))$$

As pointed out before, $\gamma(t)$ is contained in the the subspace $W$, therefore we can write

$$\gamma(t_1) = b_1 \gamma^\prime(t_1)+b_2 \nu(\gamma(t_1))$$

Since the vector fields $X$ and $Y$ are constant along the curve $\beta$ and the functions $w$, $\lambda$ and $e_n(w)$ are also constant along the curve $\beta$, after denoting $\lambda=\lambda(x_1)=\lambda(\gamma(t_1))$, $N=\nu (\gamma(t_1))$, we have that 

\begin{eqnarray*}
\nu(x_1)&=&-\lambda y+\left(-\lambda a_1+\lambda b_1\right) \gamma^\prime(t_1)+\left(-\lambda a_2+1+\lambda b_2 \right)N\\
e_n(x_1)&=&\frac{g^\prime(t_1)}{g(t_1)} y+\left(1+\frac{g^\prime(t_1)}{g(t_1)} a_1-\frac{g^\prime(t_1)}{g(t_1)} b_1\right) \gamma^\prime(t_1)+\left(\frac{g^\prime(t_1)}{g(t_1)} a_2-\frac{g^\prime(t_1)}{g(t_1)} b_2\right)N\\
\end{eqnarray*}

Let us denote by $\xi_1=a_1-b_1$ and $\xi_2=a_2-b_2$. Since $\<\nu(x_1),\nu(x_1)\>=b$ and $\<e_n(x_1),e_n(x_1)\>=d$, we have that

\begin{eqnarray*} 
b&=&\lambda^2 \<y,y\> + d \lambda^2\xi_1^2 + (1-\lambda \xi_2)^2b\\
d&=&\left(\frac{g^\prime(t_1)}{g(t_1)} \right)^2 \<y,y\> + d \left(1+\frac{g^\prime(t_1)}{g(t_1)} \xi_1\right)^2 +b \left(\frac{g^\prime(t_1)}{g(t_1)} \right)^2 \xi_2^2
\end{eqnarray*}

 By multiplying the first equation above by $db$ and adding it to the third equation and using the fact that 
 
 \begin{eqnarray} \label{egCeq0}
 g^\prime(t_1)^2+db \lambda^2 g(t_1)^2=0,\quad b\frac{g^\prime(t_1)^2}{g(t_1)^2}+d \lambda^2=0,\quad 
  d\frac{g^\prime(t_1)^2}{g(t_1)^2}+b \lambda^2=0
 \end{eqnarray}
 
 we obtain 
 
 $$ \frac{g^\prime(t_1)}{g(t_1)} \xi_1-\lambda \xi_2=0 $$
 
Since $\<\nu(x_1),e_n(x_1)\>=0$, 

\begin{eqnarray*}
0&=&-\lambda \frac{g^\prime(t_1)}{g(t_1)} \<y,y\> - d \lambda \xi_1 (1+\frac{g^\prime(t_1)}{g(t_1)} \xi_1)+b \frac{g^\prime(t_1)}{g(t_1)} \xi_2 (1-\lambda \xi_2)
\end{eqnarray*}

Combining the previous equation with the relations (\ref{egCeq0}) and $ \frac{g^\prime(t_1)}{g(t_1)} \xi_1-\lambda \xi_2=0 $ lead us to:
 
\begin{eqnarray}\label{expxis}
 \xi_1=-\frac{d g^\prime(t_1)}{2 g(t_1)}\<y,y\> \com{and}   \xi_2=\frac{b\lambda}{2} \<y,y\> 
 \end{eqnarray}
 
Recall that,  since $\eta$ is constant on all the manifold, we have that $\eta=-g^\prime(t_1) \gamma^\prime(t)+bd\lambda g(t_1) N$. A direct computation, using the expressions for $\xi_1$ and $\xi_2$ in Equation (\ref{expxis}),  shows that 
 
\begin{eqnarray*}
x_1 &=&y+a_1\gamma^\prime(t_1)+a_2 N \\
&=& y+(b_1+\xi_1) \gamma^\prime(t_1)+(b_2+\xi_2)N\\
&=& y+b_1 \gamma^\prime(t_1)-\frac{dg^\prime(t_1)}{2g(t_1)}\<y,y\> \gamma^\prime(t_1)+b_2N+\frac{b\lambda}{2}N\\
& =&y+\gamma(t_1)+\frac{d\<y,y\>}{2 g(t_1)}\eta
\end{eqnarray*}

  This finishes the proof.

\end{proof}

\section{Solving the ODE}\label{solvingode}

In this section we study the solutions of the ODE that provides the principal curvatures. The reason for doing this is to provide an exact description of the values of $C$ and $H$ that produce complete hypersurfaces.

\begin{mydef}\label{def of f, g, and kappas}
For any real numbers $n,a,b,d,C$ and $H$ such that $n,a,b$ and $d$ are integers satisfying $n \ge 2$, $|b|=|d|=1$ and $|a|\le1$, we define:

\begin{itemize}

\item
The function $f:\mathbb{R}\longrightarrow \mathbb{R}$ as

$$f=C-d v^2 (a + b (H+v^{-n})^2) $$

\item

Fixing $n,a,b$ and $d$ integers satisfying $n \ge 2$, $|b|=|d|=1$ and $|a|\le1$, we say that $(H,C)$\underline{ define a $CMC(n,a,b,d)$} if there exists a positive solution $g:\mathbb{R}\longrightarrow \mathbb{R}$ of the ODE

$$(g^\prime(t))^2=f(g(t)) $$

\item

If $(H,C)$ define a $CMC(n,a,b,d)$, and $g:\mathbb{R}\longrightarrow \mathbb{R}$ satisfies $(g^\prime(t))^2=f(g(t)) $ then, the functions $\kappa_1$ and $\kappa_2$ given by

$$ \kappa_1(t)=H+g(t)^{-n}\com{and} \kappa_2(t)=nH-(n-1) \kappa_1(t)$$

will be called the \underline{curvatures functions of the solution $g$}

\end{itemize}

\end{mydef}

\begin{rem}\label{cmc existence} If $M\subset \sf$ is a complete connected hypersurface with constant mean curvature $H$, with two non constant principal curvatures, and $C$ is the constant associated with $M$, then $(H,C)$ define a $CMC(n,a,b,d)$. This is because for any integral curve of the vector field $e_n$ defines a geodesic $\gamma(t)$ that is defined in the whole real line by the completeness of $M$ and, by Theorem \ref{local relations} part (iv), the function $g(t)=\omega(\gamma(t))$ satisfies the differential equation $(g^\prime(t))^2=f(g(t)) $.

\end{rem}

\begin{rem}\label{rem ODE for g}
It is evident that this ordinary differential equation can be solved by separation of variables. More precisely, when $f$ is positive between $v_1$ and $v_2$
with  $0\le v_1< v_2\le \infty$, then, if we define $F:(v_1,v_2)\to {\bf R}$ by $F(v)=\int_{v_0}^v\frac{1}{\sqrt{f(u)}}\, du$ (where $v_0$ is a number between $v_1$ and $v_2$), $J=F((v_1,v_2))=(T_1,T_2)$ and  $G:(T_1,T_2)\to (v_1,v_2)$  the inverse of $F$, then, for any constant $k$, the functions $y(t)=G(t-k)$ and $y(t)=G(k-t)$ are solutions of the ODE $y^\prime(t)^2=f(y(t))$. 

\end{rem}

\begin{prop}\label{typers} We can divide the set $\{g: g \hbox{ is a solution of ODE} (n,a,b,d,C,H) \}$ into 5 types.

\begin{enumerate}
\item
Periodic solution:  In this case the function $g$ takes values between $v_1$ and $v_2$ and the maximum and minimum of $g$ are achieved infinitely many times

\item
Unbounded with no minimum: In this case the function $g$ takes values from $v_1$ to  $\infty$, $g$ is one to one and the value $v_1$ is never reached.
\item

Unbounded with a minimum: in this case the function $g$ takes values from $v_1$ to  $\infty$,  $v_1$ is reached exactly one and all other values in the range of $g$ are reached exactly twice.

\item
bounded and not periodic with a minimum:  in this case  the function $g$ takes values between $v_1$ to  $v_2$,  $v_1$ is reached exactly one, $v_2$ is never reached, and all other values in the range of $g$ are reached exactly twice.

\item
bounded and not periodic with a maximum:  in this case the function $g$ takes values between $v_1$ to  $v_2$,  $v_2$ is reached exactly one, $v_1$ is never reached, and all other values in the range of $g$ are reached exactly twice.

\end{enumerate}

\end{prop}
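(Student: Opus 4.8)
The plan is to treat the first-order equation $(g')^2 = f(g)$ as a one-dimensional conservative system and run the standard phase-line (potential-well) analysis, the only genuinely new ingredient being a structural bound on the number of critical points of $f$. First I would differentiate $(g')^2 = f(g)$, obtaining $2g'g'' = f'(g)g'$ and hence $g'' = \tfrac12 f'(g)$ wherever $g'\neq 0$, and everywhere by continuity; a direct computation gives $f'(v) = -2dv\bigl(a + b\kappa_1(v)\kappa_2(v)\bigr)$, consistent with part (1) of Lemma \ref{lk1k2}. Since $f$ is smooth on $(0,\infty)$, the field $v\mapsto \tfrac12 f'(v)$ is locally Lipschitz, so the second-order system $(g,g')$ has unique solutions through each state, and $(g')^2 - f(g)$ is a first integral vanishing identically on our solutions. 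The qualitative behaviour is then forced: on any interval where $f(g)>0$ the derivative $g'=\pm\sqrt{f(g)}$ keeps a fixed nonzero sign, so $g$ is strictly monotone, and $g$ can reverse direction only at a time $t_0$ with $g'(t_0)=0$, that is, $f(g(t_0))=0$. (Throughout I treat non-constant solutions; the constant ones are equilibria sitting at double zeros of $f$ and are excluded.)

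Next I would classify the turning values. If $g$ is non-constant and $g'(t_0)=0$, then $f'(g(t_0))\neq 0$, for otherwise $(g(t_0),0)$ is an equilibrium of $g''=\tfrac12 f'(g)$ and uniqueness forces $g$ constant. Thus every turning value $v_*$ is a simple zero of $f$, and $g''(t_0)=\tfrac12 f'(v_*)\neq 0$ makes $t_0$ a strict maximum (if $f'(v_*)<0$) or minimum (if $f'(v_*)>0$). Two analytic facts then pin down the behaviour at the endpoints $v_1<v_2$ of the connected component of $\{f>0\}$ containing the range of $g$. At a simple zero the separation-of-variables time $\int \frac{dv}{\sqrt{f(v)}}$ of Remark \ref{rem ODE for g} converges (integrand of order $|v-v_*|^{-1/2}$), so $g$ reaches $v_*$ in finite time and, comparing $g$ with $t\mapsto g(2t_0-t)$, reflects symmetrically about $t_0$ by uniqueness. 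By contrast, at a double zero of $f$, and as $v\to\infty$ where $f(v)\sim -d(a+bH^2)v^2$, that integral diverges, so the value is approached only as $t\to\pm\infty$ and is never attained. Finally $v_1=0$ cannot occur for a solution defined on all of $\mathbb{R}$: near $0$ one has $|f(v)|\sim v^{2-2n}$, so the integral to $0$ converges and $g$ would hit $0$ in finite time, violating positivity; hence $v_1>0$ always.

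Combining these, the endpoint pair $(v_1,v_2)$ can only be of the forms: both simple zeros; simple zero $v_1$ with $v_2=\infty$; simple zero $v_1$ with double zero $v_2$; double zero $v_1$ with simple zero $v_2$; and double zero $v_1$ with $v_2=\infty$. These yield respectively the periodic, unbounded-with-minimum, bounded-with-minimum, bounded-with-maximum and unbounded-with-no-minimum solutions of the statement, and the asserted attainment multiplicities follow from the reflection symmetry about the single turning time. The one case left to exclude is a monotone heteroclinic solution joining two double zeros $v_1<v_2$ with $f>0$ between them, which would be a spurious sixth type. Here I would invoke the decisive structural fact: writing $u=v^{-n}$, one has $a+b\kappa_1\kappa_2 = a + b\bigl(H^2 - (n-2)Hu - (n-1)u^2\bigr)$, a quadratic in $u$, and since $u=v^{-n}$ is a bijection of $(0,\infty)$, the equation $f'(v)=0$ has at most two roots on $(0,\infty)$. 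Two double zeros would contribute critical points at $v_1$ and $v_2$, and Rolle applied to $f$ on $[v_1,v_2]$ would produce a third strictly between them, a contradiction. The main obstacle is precisely this last step: the potential-well dichotomy is routine, but showing that the five types form a genuine partition (ruling out the heteroclinic orbit) hinges on the non-obvious observation that $f$ has at most two critical points on $(0,\infty)$.
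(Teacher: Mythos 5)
Your proposal is correct and takes essentially the same route as the paper: both classify solutions by the nature of the two endpoints of the connected component of $\{f>0\}$ containing the range of $g$, and both hinge on the same structural fact that the positive critical points of $f$ solve a quadratic equation in $v^{-n}$ (hence there are at most two, and---by your Rolle argument, which the paper states without proof---$f$ cannot vanish at two critical points, which is exactly what rules out the heteroclinic ``sixth type''). Your analysis of endpoint behavior via convergence/divergence of the time integral $\int dv/\sqrt{f(v)}$, reflection symmetry, and the finite-time collapse to $v=0$ is precisely the rigorous content behind the paper's terser appeal to separation of variables (its Remark 8.3) and to $v=0$ being a vertical asymptote, so the two arguments match step for step.
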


\begin{proof}
The classification follows from considering all the possible intervals $I$'s in $\mathbb{R}_+=\{t:t>0\}$ such that $f$ vanishes at the boundary points of $I$ and $f$ is positive on $I$. Since $v=0$ is a vertical asymptote of $f$, then $0$ cannot be the initial point of the interval $I$. A direct computation shows that the positive critical points of $f$ satisfy an equation of the form

$$b (n-1)(v^{-n})^2+b H (n-2) v^{-n} -a-b H^2=0$$

which we can be easily solved for $v^{-n}$. Then, the function $f$ can have a most two positive critical points, and moreover, in case it has two positive critical points it cannot vanish in both of them. Therefore the possible cases for $I$ are: (i) $I=[v_1,v_2]$ where neither $v_1$ or $v_2$ is a critical point. In this case  the solution $g$ of $(g^\prime)^2=f(g)$ is periodic and its range is $I$. Figures \ref{aeqn1beq1ex4} and  \ref{aeq1beqn1ex2}  show examples of values of $a,b,d, H$ and $C$ that produce this type of solutions. (ii) $I=[v_1,\infty)$ with $v_1$ a critical  point of $f$. In this case  the solution $g$ of $(g^\prime)^2=f(g)$ is unbounded with no minimum and its range is the open interval $(v_1,\infty)$. Figures \ref{aeqn1beq1ex3}, \ref{aeqn1beq1ex1}, \ref{aeq1beqn1ex1} \ref{aeqn1beqn1ex1} and  \ref{aeq0beqn1ex1}  show examples of values of $a,b,d, H$ and $C$ that produce this type of solutions. (iii) $I=[v_1,\infty)$ with $v_1$ not a critical  point of $f$. In this case  the solution $g$ of $(g^\prime)^2=f(g)$ is unbounded with a minimum and its range is the interval $I$. (iv) $I=[v_1,v_2]$ with $v_2$ a critical  point of $f$. In this case  the solution $g$ of $(g^\prime)^2=f(g)$ is bounded not periodic and its range is the interval $[v_1,v_2)$ Figure \ref{aeqn1beq1ex2}  shows an example of values of $a,b,d, H$ and $C$ that produce this type of solutions. (v)  $ I=[v_1,v_2]$ with $v_1$   a critical  point of $f$. In this case  the solution $g$ of $(g^\prime)^2=f(g)$ is bounded, not periodic and  with no minimum and its range is the interval $(v_1,v_2]$. Figure \ref{aeq1beqn1ex3}  shows an example of values of $a,b,d, H$ and $C$ that produce this type of solutions.  After solving the differential equation $(g^\prime)^2=f(g)$ (see Remark  \ref{rem ODE for g}) we can see that case (i) provides a solution of type (1), case (ii) provides a solution of type (2) and so on.

The following figures show possible examples of values of $n,a,b,d,C$ and $H$ that produce graphs of the function $f$ for all possible types.

\begin{figure}[hbtp]
\begin{centering}\includegraphics[width=.3\textwidth]{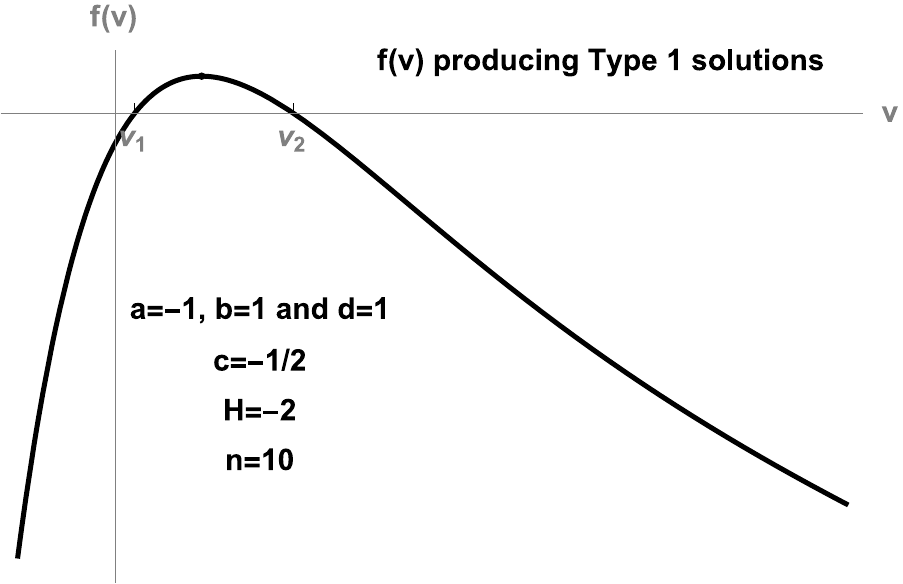} \includegraphics[width=.3\textwidth]{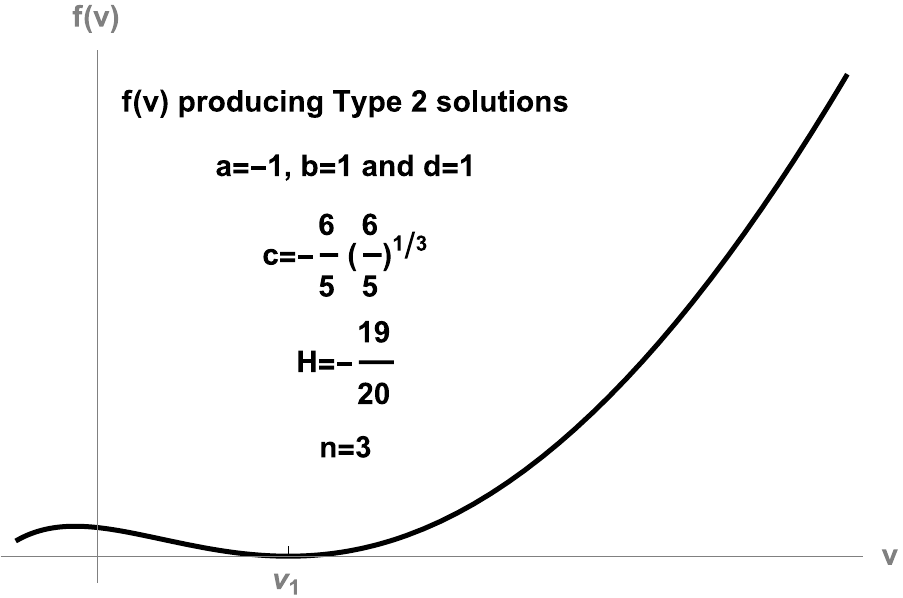} \includegraphics[width=.3\textwidth]{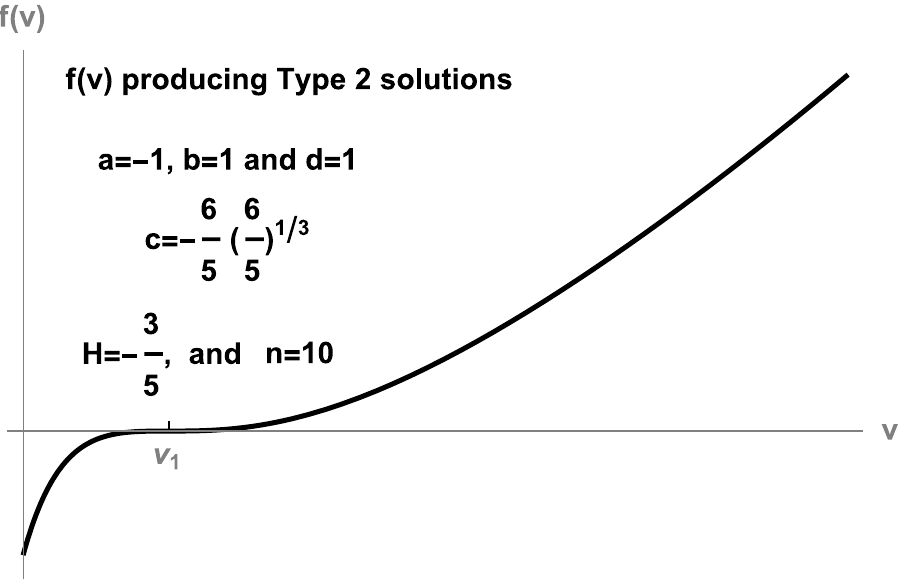}
\end{centering}
\caption{Type 1 solutions provide periodic solutions of the equation $(g^\prime)^2=f(g)$ while Type 2 produce unbounded solution whose range is the open interval $(v_1,\infty)$. The zero of $f$ at $v_1$ can have multiplicity two or three.}\label{fig1}
\end{figure}

\begin{figure}[hbtp]
\begin{centering}\includegraphics[width=.3\textwidth]{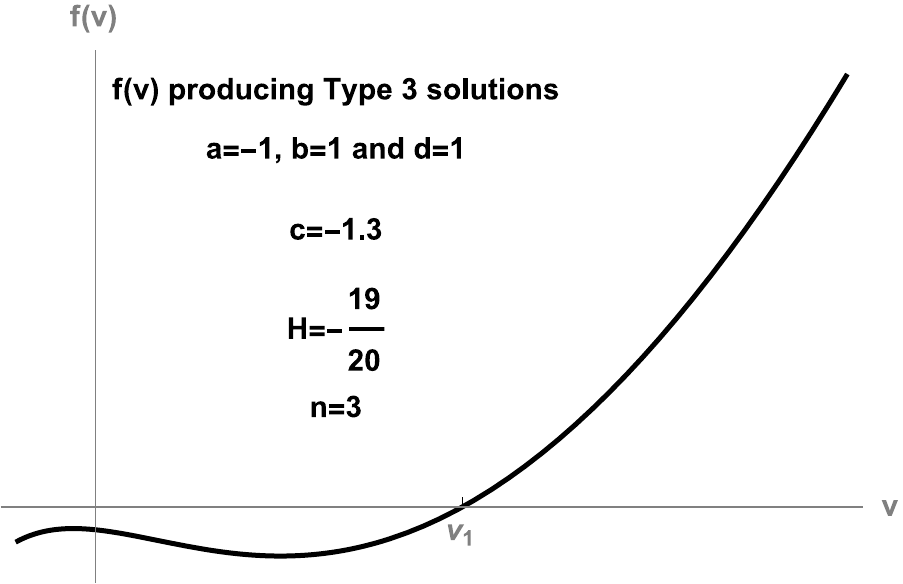} \includegraphics[width=.3\textwidth]{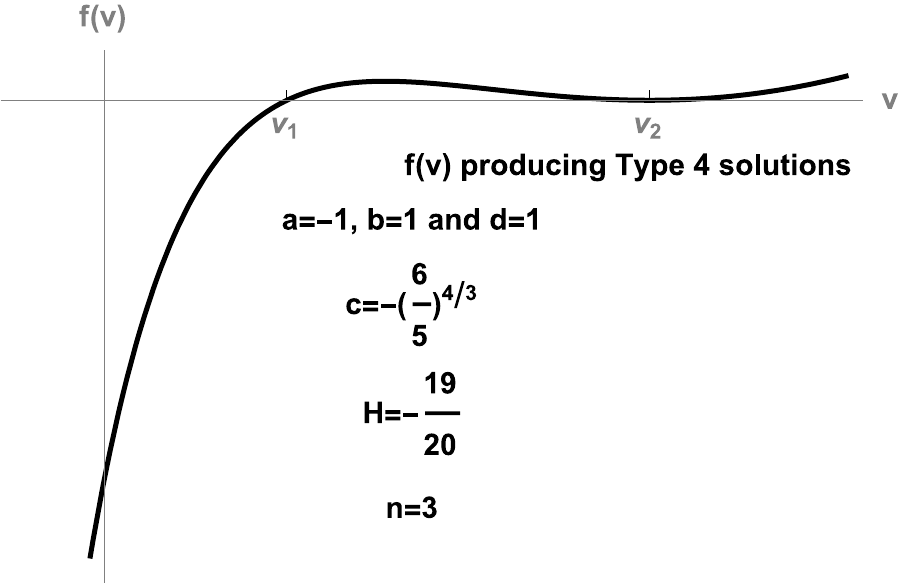} \includegraphics[width=.3\textwidth]{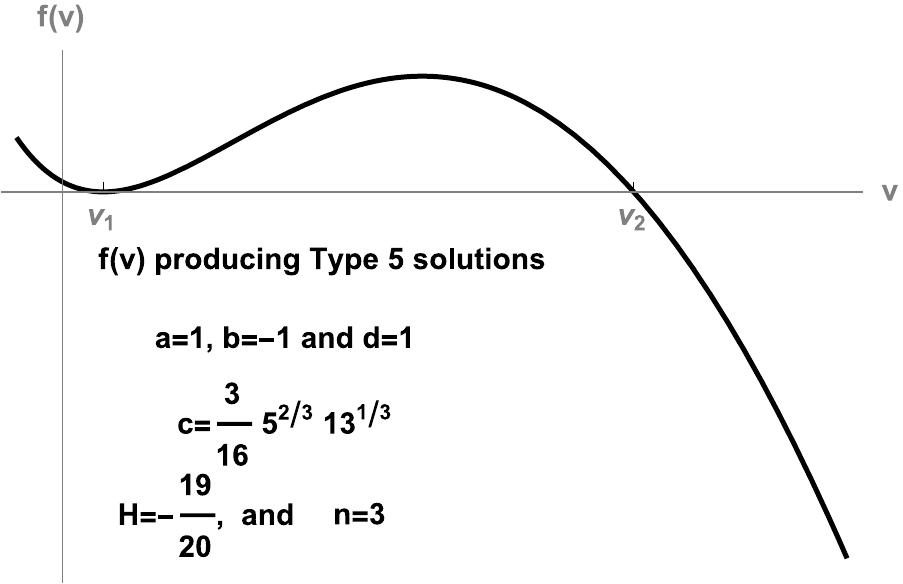}
\end{centering}
\caption{Type 3 solutions provide unbounded solutions of the equation $(g^\prime)^2=f(g)$  with a minimum.  Type 4 solutions produce bounded solutions with a minimum and not a maximum and Type 5 solutions produce bounded solutions with a maximum but not a minimum}\label{fig2}
\end{figure}
%%%%

\end{proof}

\section{Module space of complete immersions}\label{mods}

In this section we describe the posible values of $H$ and $C$ that can be achieved as values of a cmc hypersurfaces and with two principal curvatures on a space form $\sf$ under the assumption that the principal curvatures are not constant and  $n>2$.

\subsection{Space of solutions in case: $ad=-1$ and $bd=1$}  In this section we describe the values of $(H,C)$ that provide complete immersions in $\sf$ when $ad=-1$ and $bd=1$. This case includes cmc hypersurfaces in the Hyperbolic $(n+1)-$dimensional space.
In this case the differential equation for $g$ reduces to

$$g'(t)^2=f(g)=C + g^2 - H^2 g^2 - g^{2 - 2 n} - 2 H g^{2 - n}$$

where,
 $$f:(0,\infty)\to \mathbb{R} \com{(we are only considering positive inputs for the function $f$)}$$
 Let us define the following two functions: 

$$q_1,r_1:(-\infty,-\frac{2 \sqrt{n-1}}{n}]\cup [1,\infty)\to \R\com{and} r_2:[-1,-\frac{2 \sqrt{n-1}}{n}]\to \R \com{and} q_2:(-1,-\frac{2 \sqrt{n-1}}{n}]\to \R  $$

given by

$$ r_1(H)=\frac{n \left(h \sqrt{h^2 n^2-4 n+4}+\left(h^2-2\right) n+2\right)}{2^{\frac{n-2}{n}} (n-1)^{\frac{2 (n-1)}{n}} \left(\sqrt{h^2 n^2-4 n+4}-h (n-2)\right)^{2/n}}$$
and

$$ r_2(H)=\frac{n \left(h \left(-\sqrt{h^2 n^2-4 n+4}\right)+\left(h^2-2\right) n+2\right)}{2^{\frac{n-2}{n}} (n-1)^{\frac{2 (n-1)}{n}} \left(-\sqrt{h^2 n^2-4 n+4}-h (n-2)\right)^{2/n}}$$

and 

$$ q_1=\left(\frac{\sqrt{h^2 n^2-4 n+4}-h (n-2)}{2 (n-1)}\right)^{-1/n}\com{and} q_2=\left(\frac{-\sqrt{h^2 n^2-4 n+4}-h (n-2)}{2 (n-1)}\right)^{-1/n} $$

\begin{figure}[hbtp]
\begin{centering}\includegraphics[width=.3\textwidth]{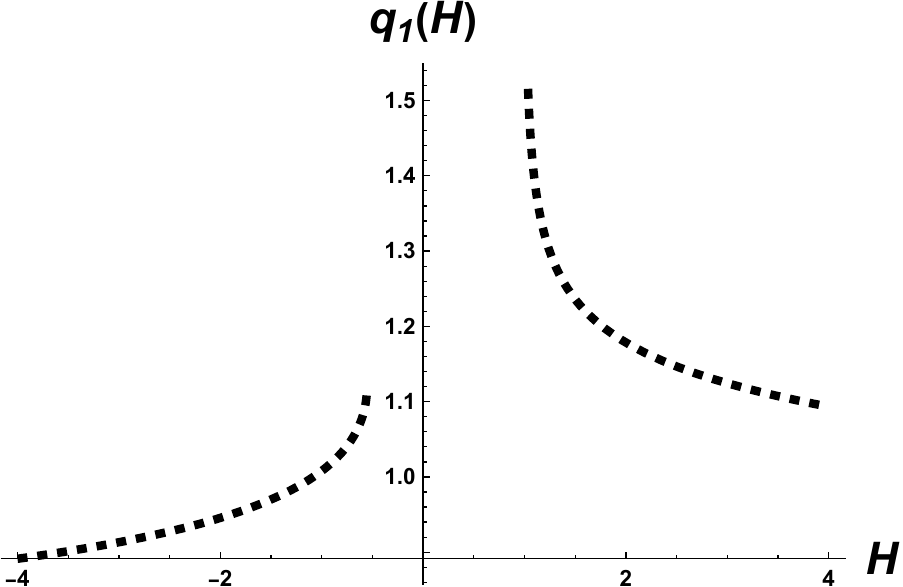} \hskip.5cm \includegraphics[width=.3\textwidth]{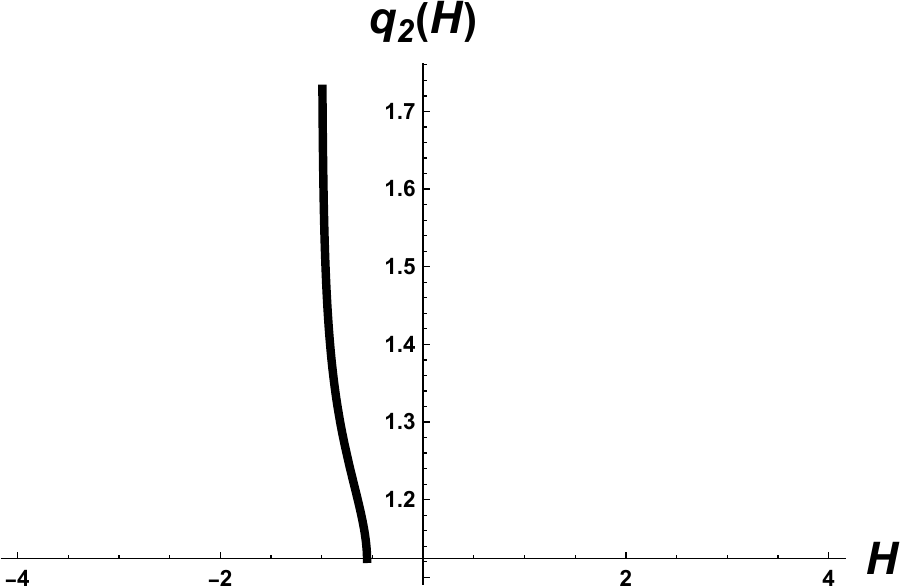} \hskip0.5cm \includegraphics[width=.3\textwidth]{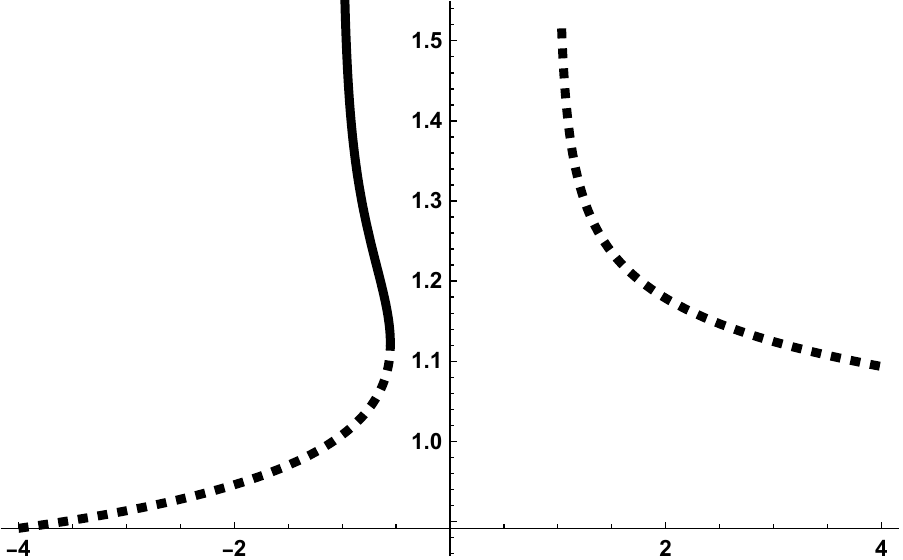} 

\end{centering}
\caption{Graph of the functions $q_1$ and $q_2$ when $n=12$. Both graphs have the same value at $H=-\frac{2 \sqrt{n-1}}{n}$. The image on the right shows both graphs in the same cartesian plane.}\label{abn1bd1qs}
\end{figure}

\begin{figure}[hbtp]
\begin{centering}\includegraphics[width=.3\textwidth]{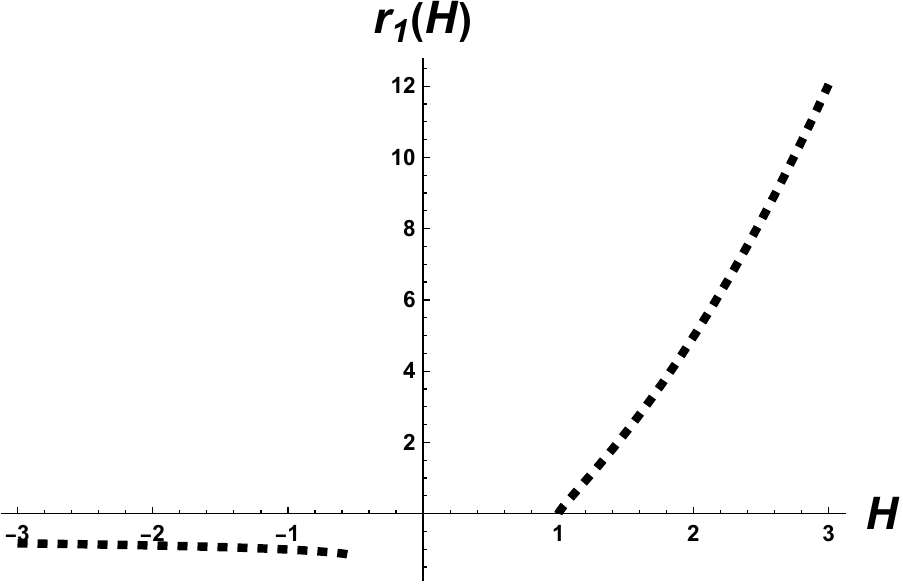} \hskip.5cm \includegraphics[width=.3\textwidth]{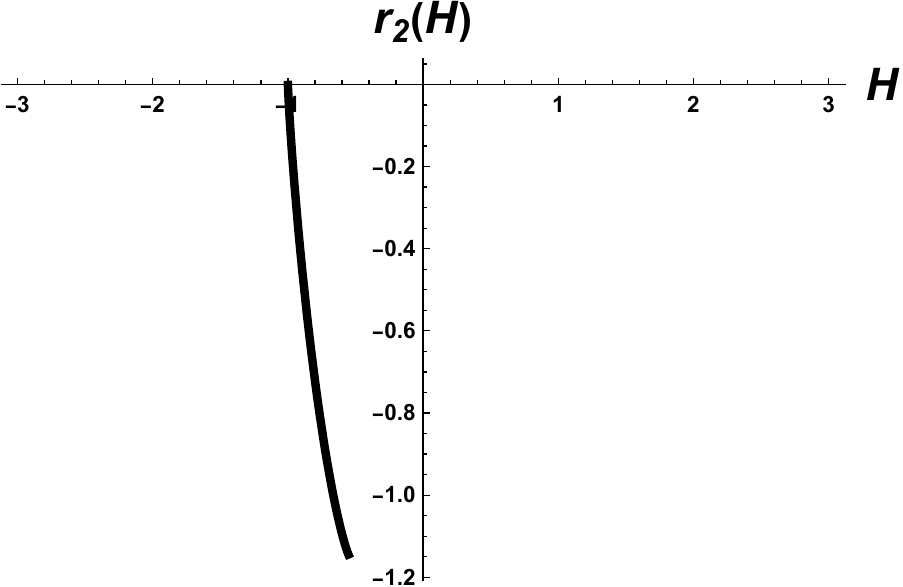} \hskip0.5cm \includegraphics[width=.3\textwidth]{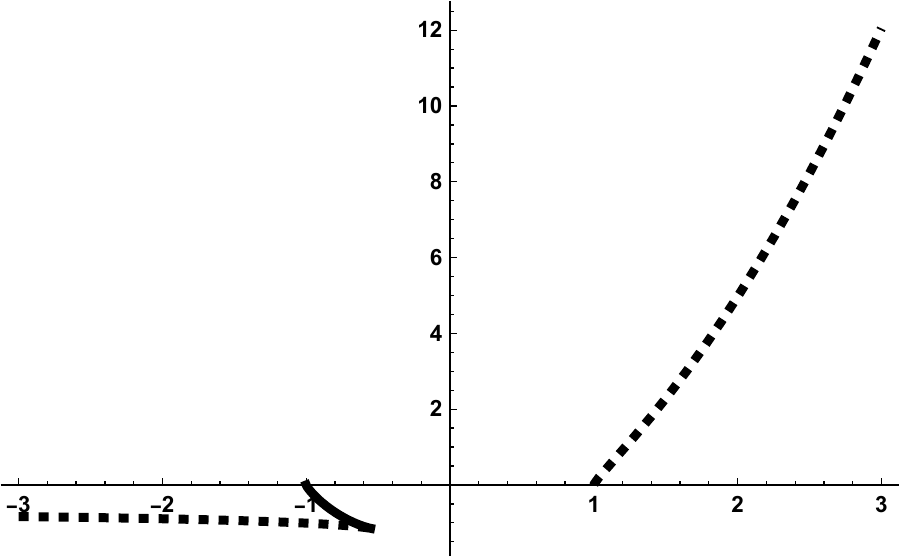} 

\end{centering}
\caption{Graph of the functions $r_1$ and $r_2$ when $n=12$. Both graphs have the same value at $H=-\frac{2 \sqrt{n-1}}{n}$. The image on the right shows both graphs in the same cartesian plane..}\label{abn1bd1rs}
\end{figure}

Let us explain the relation between the functions $r_1$ and $r_2$ and the graph of the function  $f$. It is clear that the limit when $v$ goes to $0$ from the right of the function $f(v)$ is negative infinity. A direct computation shows that the derivative of $f$ is given by 

$$ f'(v)=v \left(2 - 2 h^2 + 2 (-1 + n) v^{-2 n} + 2 h (-2 + n) v^{-n}\right) $$

Therefore we have that the positive critical points of $f$ satisfies the quadratic equation $2 - 2 h^2 + 2 (-1 + n) x^2 + 2 h (-2 + n) x=0$ with $x=v^{-n}$. Solving this quadratic equation produces the values of the functions $q_1$ and $q_2$ defined above.  The expressions for $r_i(H)$ come from evaluating the function  $-f$ when $C=0$ and $v$ is $q_i$.  The following theorem summarizes the values of $H$ and $C$ that are achieved as values of non trivial complete hypersurfaces with two principal curvatures and cmc.

%\label{hit}
\begin{thm} \label{hyp} Let $M\subset \sf$ be a complete hypersurface with two principal curvatures. Assume that $M$ has constant mean curvature $H$, that $C$ is the constant associated with $M$ and that the principal curvatures of $M$  are not constant. Also assume that the sign of $H$ is determined by the condition $\lambda-\mu>0$. If $n>2$, $ad=-1$ and $bd=1$, then the value of $H$  and $C$ satisfy the following relations

\begin{enumerate}
\item
$H$ can be any real number.
\item
When $-\infty<H<-1$, then $C>r_1(H)$ and  $g(t)$ is a {\it type 1} solution.
\item
When $H=-1$, then   $C>r_1(-1)$ and either $g(t)$ is a {\it type 1} solution when $r_1(-1)<C<0$ or $g(t)$ is a {\it type 3} solution when $C\ge 0$.
\item
When $-1<H<-\frac{2\sqrt{n-1}}{n} $ then $C$ can be any real number. Moreover,

\begin{itemize}
\item
 if $C \le r_1(H)$ then $g(t)$ is a  {\it type 3} solution.
\item
if $r_1(H)<C<r_2(H)$ then there are two types of immersions, one with  $g(t)$ of {\it type 1}  and the other immersion with $g(t)$ a {\it type 3} solution.

\item
if $C=r_2(H)$ then there are two types of immersions, one with $g(t)$ of {\it type 4}  and the other immersion with $g(t)$ a {\it type 2} solution.
\item
 if $C > r_2(H)$ then $g(t)$ is a  {\it type 3} solution.

 \end{itemize}
 \item
When $H=-\frac{2\sqrt{n-1}}{n}$, then $C$ can be any number and $g(t)$ is of {\it type 2} if $C= r_1(-\frac{2\sqrt{n-1}}{n})$, otherwise $g(t)$ is a  {\it type 3} solution.
 \item
When $-\frac{2\sqrt{n-1}}{n}<H<-1$, then $C$ can be any number and $g(t)$ is a  {\it type 3} solution.

 \item
When $H=1$, then  $C>r_1(1)=0$ and  $g(t)$ is a {\it type 3} solution.

 \item
When $H>1$, then  $C>r_1(H)$ and  $g(t)$ is a {\it type 1} solution.
\end{enumerate} 

\end{thm}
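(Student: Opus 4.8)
The plan is to reduce everything to a careful study of the graph of the single function
$$f(v)=C+v^{2}-H^{2}v^{2}-v^{2-2n}-2Hv^{2-n}\com{on}(0,\infty),$$
and then to read off the type of each solution from Proposition \ref{typers}. By Remark \ref{cmc existence}, a complete $M$ forces $(H,C)$ to define a $CMC(n,a,b,d)$, i.e.\ the ODE $(g')^{2}=f(g)$ admits a positive solution on all of $\R$; conversely the immersions of Section \ref{explicitEx} turn any such solution into a complete hypersurface. So the whole theorem is equivalent to classifying, for each $(H,C)$, the non-constant positive global solutions of $(g')^{2}=f(g)$.

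First I would record the two asymptotic facts that drive the trichotomy in $H$: since $2-2n\le-2$, we always have $\lim_{v\to0^{+}}f(v)=-\infty$, while $f(v)\sim(1-H^{2})v^{2}$ shows that $f\to+\infty$ when $|H|<1$, $f\to-\infty$ when $|H|>1$, and (using $n>2$, so $v^{2-n},v^{2-2n}\to0$) $f\to C$ when $|H|=1$. Next I would locate the positive critical points: from
$$f'(v)=v\bigl(2-2H^{2}+2(n-1)v^{-2n}+2H(n-2)v^{-n}\bigr),$$
the substitution $x=v^{-n}$ turns the bracket into the upward parabola $(n-1)x^{2}+H(n-2)x+(1-H^{2})$, whose discriminant is exactly $H^{2}n^{2}-4n+4$; this both explains the threshold $|H|=\tfrac{2\sqrt{n-1}}{n}$ and yields the roots $q_{1}^{-n},q_{2}^{-n}$. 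Tracking the sign of $f'$ as $v$ increases (equivalently as $x$ decreases through the parabola) shows that the larger root $q_{1}^{-n}$ gives a local maximum of $f$ and the smaller root $q_{2}^{-n}$ a local minimum, and a short positivity check pins down the exact $H$-ranges in which each exists: $q_{1}$ is positive iff $H\le-\tfrac{2\sqrt{n-1}}{n}$ or $H>1$, and $q_{2}$ is positive iff $-1<H\le-\tfrac{2\sqrt{n-1}}{n}$. Finally I would use the defining identity $f(q_{i})=C-r_{i}(H)$ (just the statement $r_{i}(H)=-f(q_{i})|_{C=0}$), so that the sign of $f$ at each critical point is governed directly by comparing $C$ with $r_{1}(H)$ and $r_{2}(H)$.

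With these ingredients the argument becomes a finite case split on the position of $H$ relative to $-1$, $-\tfrac{2\sqrt{n-1}}{n}$ and $1$. In each band the shape of $f$ is completely determined --- rising from $-\infty$, passing through the available critical values, and ending at $+\infty$, $-\infty$ or $C$ --- so the subintervals on which $f>0$ are forced, together with the nature (simple zero versus critical double zero) of their endpoints. Matching these endpoints to Proposition \ref{typers} reads off the type: a bounded component with two simple endpoints is Type 1, an unbounded component whose finite endpoint is a simple zero is Type 3, an unbounded component whose finite endpoint is a critical point is Type 2, and a bounded component with one simple and one critical endpoint is Type 4. For instance, in the band $-1<H<-\tfrac{2\sqrt{n-1}}{n}$ both critical points are present and $\{f>0\}$ can split into two components, which is exactly why two distinct solution types coexist for $r_{1}(H)<C<r_{2}(H)$ and for $C=r_{2}(H)$. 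Throughout, I would discard the thresholds $C=r_{i}(H)$ at which a critical point becomes a double zero, since there $f>0$ only off an equilibrium and the solution $g\equiv q_{i}$ has constant principal curvatures, contrary to hypothesis.

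The step I expect to cause the most trouble is not any single computation but the globalness bookkeeping married to the case analysis. Using the separation-of-variables integral $F(v)=\int 1/\sqrt{f}$ of Remark \ref{rem ODE for g}, one must verify that the selected solution really is defined on all of $\R$: at a simple zero the integral converges, so $g$ reflects in finite time and continues (yielding genuinely periodic Type 1 solutions and two-sided Type 3 solutions), whereas at a critical double zero or as $v\to\infty$ (where $f$ grows like $v^{2}$ or tends to a positive constant) the integral diverges, so the endpoint is approached only asymptotically. Keeping straight which components are globally realizable, handling the boundary values $H=-1,\,\pm\tfrac{2\sqrt{n-1}}{n},\,1$ where critical points merge or escape to infinity (so that $q_{2}^{-n}\to0$ at $H=-1$ and $q_{1}^{-n}\to0$ at $H=1$), and checking the sign conventions that fix where $r_{1}(H)$ sits relative to $0$ --- these are precisely the places where the kind of omissions the introduction attributes to earlier work can creep in, and they deserve the most care.
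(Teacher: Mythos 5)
Your proposal is correct and takes essentially the same route as the paper's own proof: both reduce the theorem, via Remark \ref{cmc existence} and the explicit immersions of Section \ref{explicitEx}, to a sign analysis of $f$ on $(0,\infty)$ — asymptotics at $0^+$ and $\infty$, the critical points $q_1,q_2$ from the quadratic in $v^{-n}$, and comparison of $C$ with $r_i(H)=-f(q_i)\big|_{C=0}$ — and then match each component of $\{f>0\}$ to a solution type using Remark \ref{rem ODE for g} and Proposition \ref{typers}. The only phrasing to tighten is your remark about ``discarding the thresholds $C=r_i(H)$'': what must be discarded there is only the equilibrium solution $g\equiv q_i$ (constant principal curvatures), not the parameter value itself, as your own treatment of $C=r_2(H)$ (yielding Type 4 and Type 2 solutions) already shows.
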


\begin{proof}
Notice that regardless of the values of $C$ and $H$ we have that the limit when $v\rightarrow 0^+$ is negative infinity. The theorem is a direct application of remark \ref{rem ODE for g} and the fact that:

\begin{enumerate}

\item
When $-\infty<H<-1$, $q_1(H)$ is the only positive critical point of the function $f(v)$ and it corresponds to a maximum. Moreover: (i) when $C>r_1(H)$, $f(v)$ has exactly two roots $v_1$ and $v_2$ and it is  only positive on an interval $(v_1,v_2)$ that contains the critical point $q_1(H)$. (ii) when $C\le r_1(H)$, $f$ is never positive.
\item
When $H=-1$, the function  $f$ has only one critical point which is a maximum. Moreover: (i) if  $C\le r_1(-1)$ then $f$ is never positive. (ii) If  $r_1(-1)<C<0$, then, $f(v)$ has exactly two roots $v_1$ and $v_2$ and it is  only positive on an interval $(v_1,v_2)$ that contains the critical point $q_1(H)$, and (iii) if $C\ge 0$, then $f(v)$ has only one root $v_1$ and it is positive on the interval $(v_1,\infty)$.

\item
When $-1<H<-\frac{2\sqrt{n-1}}{n} $ then the function $f$ has two critical points, a local maximum $q_1$ and, a local minimum  $q_2$. Moreover,

\begin{itemize}
\item
 if $C < r_1(H)$ the function $f$ has only one root $v_1>q_2(H)$ and $f$ is positive on the interval $(v_1,\infty)$.
\item
 if $C = r_1(H)$ the function $f$ has two roots $v_1$ and $v_2$ with $v_1<v_2$. $v_1$ is a local maximum and  $f$ is positive on the interval $(v_2,\infty)$
 
\item
if $r_1(H)<C<r_2(H)$ then $f$ has three roots $v_1, v_2$ and $v_3$, and $f$ is positive on the intervals $(v_1,v_2)$ and on the interval $(v_3,\infty)$.  Figure \ref{aeqn1beq1ex4}  shows particular choices for the parameters that produces an example of a function $f$ for this case. 

\item
if $C=r_2(H)$ then $f$ has two roots $v_1$ and  $v_2$ with  $v_1<v_2$. $v_2$ is a local minimum and $f$ is positive on the intervals $(v_1,v_2)$ and in the interval $(v_2,\infty)$. Figure \ref{aeqn1beq1ex3} (or Fiugre \ref{aeqn1beq1ex2}) shows particular choices for the parameters that produces an example of a function $f$ for this case. %add here
\item
 if $C > r_2(H)$ then  $f$ has only one root $v_1<q_1(H)$ and $f$ is positive on the interval $(v_1,\infty)$.

 \end{itemize}
 \item
When $H=-\frac{2\sqrt{n-1}}{n}$, then the function $f$ has only one critical point which is also an inflection point. When  $C=r_1(-\frac{2\sqrt{n-1}}{n})$ then the function $f$ has only one root $v_1$ with multiplicity three and it is positive on the interval $(v_1,\infty)$. In this case, $f:(v_1,\infty)\to \R$ produces a solution $g(t)$  on the whole line of {\it type 2}. Figure \ref{aeqn1beq1ex1}  shows particular choices for the parameters that produces an example of a function $f$ for this case.  When $C\ne r_1(-\frac{2\sqrt{n-1}}{n})$, then the function $f$ has only one root with multiplicity one and it is positive on the interval $(v_1,\infty)$. In this case there is a solution $g(t)$ of {\it type 3}.
 \item
When $-\frac{2\sqrt{n-1}}{n}<H<1$, then for any value of $C$, the function $f$ has not critical points, only one root $v_1$ and it is positive on the interval $(v_1,\infty)$.

 \item
When $H=1$, then the function $f$ does not have critical points. When $C\le0$, then $f$ is never positive. When  $C>0$ then $f$ has only one root $v_1$   and it is positive on the interval $(v_1,\infty)$.

 \item
When $H>1$, then the function $f$ has only one critical point that is a maximum. Moreover when $C\le r_1(H)$, the function $f$ is never positive. When $C>r_1(H)$, the function $f$ has only two roots $v_1$ and $v_2$ and  it is positive on the interval $(v_1,v_2)$.
\end{enumerate} 

Recall that the existence of a smooth positive solution $g(t)$ on the whole real line guarantees the existence of the  immersion due to fact that the differential equation for the maps $\alpha(t)$ and $\beta(t)$ defined in  Theorem \ref{cneq0aneq0} is a linear system of ordinary differential equations.
\end{proof}

%% SECTION ad=1 and bd=-1

\subsection{Space of solutions in case: $ad=1$ and $bd=-1$} In this section we describe the values of $(H,C)$ that provide complete immersions in $\sf$ when $ad=1$ and $bd=-1$. This case includes space-like cmc hypersurfaces in the $(n+1)-$dimensional de Sitter space.
In this case the differential equation for $g$ reduces to

$$g'(t)^2=f(g)=C - g^2 + H^2 g^2 + g^{2 - 2 n} + 2 H g^{2 - n}$$

Now the corresponding functions $r_1,r_2$ and $q_1$ and $q_2$ are: 

$$q_1:(-\infty,-\frac{2 \sqrt{n-1}}{n}]\cup (1,\infty)\to \R \com{and}  q_2:(-1,-\frac{2 \sqrt{n-1}}{n}]\to \R  $$

and 

$$  r_1:(-\infty,-\frac{2 \sqrt{n-1}}{n}]\cup [1,\infty)\to \R \com{and} r_2:[-1,-\frac{2 \sqrt{n-1}}{n}]\to \R$$

given by

$$ r_1(H)=\frac{-n \left(h \sqrt{h^2 n^2-4 n+4}+\left(h^2-2\right) n+2\right)}{2^{\frac{n-2}{n}} (n-1)^{\frac{2 (n-1)}{n}} \left(\sqrt{h^2 n^2-4 n+4}-h (n-2)\right)^{2/n}}$$

$$ r_2(H)=\frac{-n \left(h \left(-\sqrt{h^2 n^2-4 n+4}\right)+\left(h^2-2\right) n+2\right)}{2^{\frac{n-2}{n}} (n-1)^{\frac{2 (n-1)}{n}} \left(-\sqrt{h^2 n^2-4 n+4}-h (n-2)\right)^{2/n}}$$

and 

$$ q_1=\left(\frac{\sqrt{h^2 n^2-4 n+4}-h (n-2)}{2 (n-1)}\right)^{-1/n}\com{and} q_2=\left(\frac{-\sqrt{h^2 n^2-4 n+4}-h (n-2)}{2 (n-1)}\right)^{-1/n} $$

\begin{figure}[hbtp]
\begin{centering}\includegraphics[width=.5\textwidth]{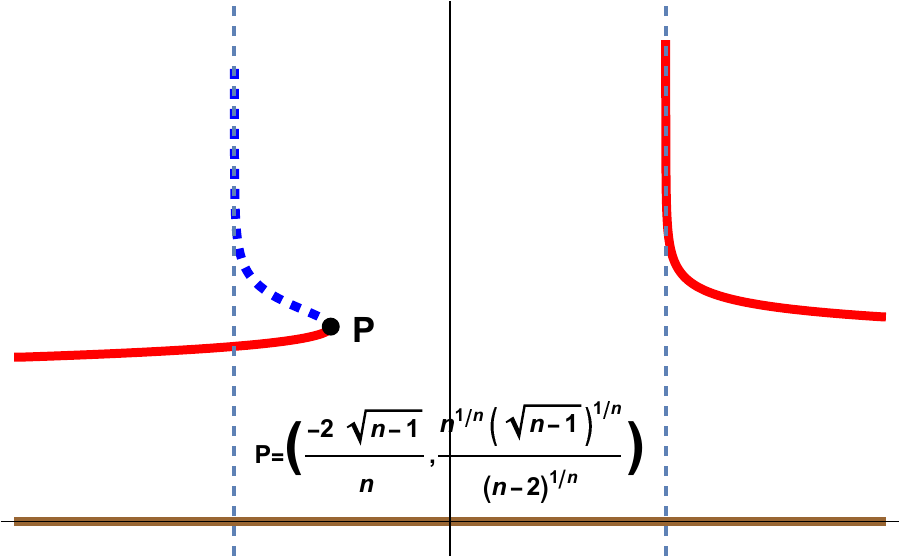} 
\end{centering}
\caption{The dashed curve is the graph of the function $q_2(H)$ and the solid curve is the graph of the function $q_1(H)$. Both graphs have the same value at $H=-\frac{2 \sqrt{n-1}}{n}$. }\label{adeq1bdeqn1qs.}
\end{figure}

\begin{figure}[hbtp]
\begin{centering}\includegraphics[width=.4\textwidth]{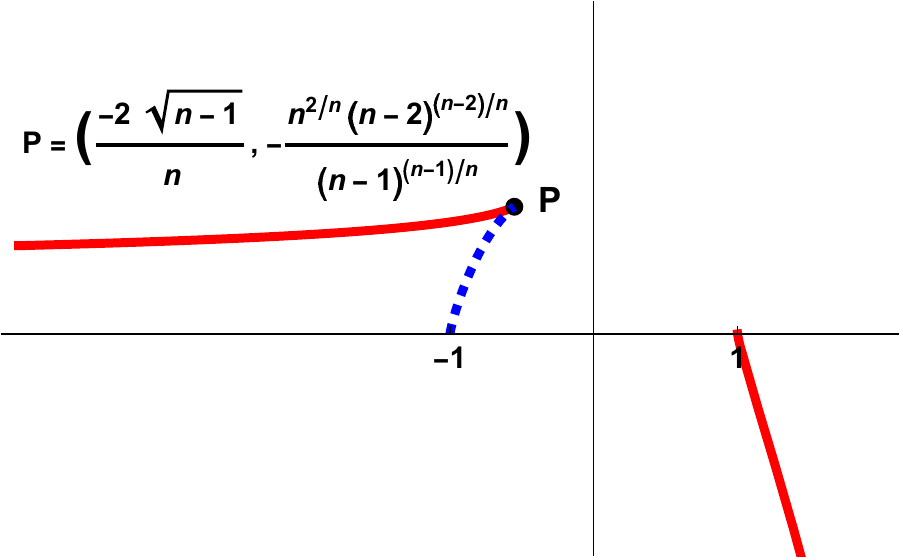} 
\end{centering}
\caption{The dashed curve is the graph of the function $r_2(H)$ and the solid curve is the graph of the function $r_1(H)$. Both graphs have the same value at $H=-\frac{2 \sqrt{n-1}}{n}$.}\label{adeq1bdeqn1rs.}
\end{figure}

\vfil
\eject

The following theorem summarizes the values of $H$ and $C$ that are achieved as values of non trivial complete hypersurfaces with two principal curvatures and cmc.

%\label{deSit}
\begin{thm}\label{deSit}Let $M\subset \sf$ be a complete hypersurface with two principal curvatures. Assume that $M$ has constant mean curvature $H$, that $C$ is the constant associated with $M$ and that the principal curvatures of $M$  are not constant.  Also assume that the sign of $H$ is determined by the condition $\lambda-\mu>0$.  If $n>2$, $ad=1$ and $bd=-1$, then the value of $H$  and $C$ satisfy the following relations

\begin{enumerate}
\item
$H$ cannot take values in the interval $[-\frac{2\sqrt{n-1}}{n},1]$, any other value of $H$ represents the mean curvature of one of these immersions. 
\item
When $-\infty<H<-1$, then $C\le r_1(H)$ and  $g(t)$ is a {\it type 2} solution when $C=r_1(H)$ and a {\it type 3} solution when $C<r_1(H)$.
\item
When $H=-1$, then   $0<C\le r_1(-1)$ and  $g(t)$ is a {\it type 3} solution when $0<C<r_1(-1)$ and a {\it type 2} solution when $C= r_1(-1)$. 
\item
When $-1<H<-\frac{2\sqrt{n-1}}{n} $ then $r_2(H)<C\le r_1(H)$ and $g(t)$ is a {\it type 5} solution when $C=r_1(H)$ and a {\it type 1} solution when $r_2(H)<C<r_1(H)$

 \item
When $H>1$, then  $C\le r_1(H)$ and  $g(t)$ is a {\it type 2} solution when $C=r_1(H)$ and a {\it type 3} solution when $C<r_1(H)$.
\end{enumerate} 

\end{thm}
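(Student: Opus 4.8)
The plan is to mirror the proof of Theorem \ref{hyp}: reduce everything to a qualitative study of the graph of $f(v)=C+(H^2-1)v^2+v^{2-2n}+2Hv^{2-n}$ on $(0,\infty)$ and then read off the global solutions of $(g')^2=f(g)$ via Remark \ref{rem ODE for g} and Proposition \ref{typers}. The decisive structural feature, which is exactly what makes this case behave differently from the hyperbolic case, is the boundary behavior at the origin: since $2-2n<0$, one has $f(v)\to+\infty$ as $v\to 0^+$ rather than $-\infty$. First I would record the completeness criterion governing which positivity intervals of $f$ produce a positive solution defined on all of $\mathbb{R}$. Because $f(v)\sim v^{2-2n}$ near $0$, one has $1/\sqrt{f}\sim v^{n-1}$, which is integrable there, so any integral curve descending toward $v=0$ reaches it in finite $t$; since $g>0$ is required and $\kappa_1=H+g^{-n}$ blows up, such a solution cannot be extended and is never complete. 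Hence a positivity interval yields a complete $g$ only when it does not have $0$ as an endpoint: its finite endpoints must be simple roots (a turning point, giving periodic or one-sided-bounded behavior) or double/triple roots (an asymptotic, infinite-time end), while an unbounded end $v\to\infty$ is admissible precisely when $f$ stays positive there and the time integral diverges, which for $|H|>1$ holds since $f\sim(H^2-1)v^2$ and $\int^\infty dv/\sqrt{f}=\infty$.

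Next I would locate the positive critical points of $f$. Setting $x=v^{-n}$, the equation $f'(v)=0$ becomes the quadratic $(n-1)x^2+H(n-2)x+(1-H^2)=0$ with discriminant $H^2n^2-4n+4$, whose roots give $q_1,q_2$ with $q_1<q_2$; they are real exactly when $|H|\ge\frac{2\sqrt{n-1}}{n}$, and the positivity conditions on the roots single out the domains $(-\infty,-\frac{2\sqrt{n-1}}{n}]\cup(1,\infty)$ for $q_1$ and $(-1,-\frac{2\sqrt{n-1}}{n}]$ for $q_2$. Comparing with the vertically reflected picture of the hyperbolic case shows $q_1$ is a local minimum and $q_2$ a local maximum of $f$. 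The critical values are linear in $C$: evaluating $f$ at $q_i$ gives $f(q_i)=C-r_i(H)$, so $r_i(H)$ is exactly the value of $C$ at which $q_i$ becomes a root of $f$, and the sign of $f$ at each critical point is controlled by comparing $C$ with $r_i(H)$.

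With these preliminaries in place the argument is a case analysis on $H$, determining in each range the number of positive roots of $f$ and the sign pattern, just as in Theorem \ref{hyp}. For $|H|>1$ (items (2) and (5)) the function tends to $+\infty$ at both ends with a single local minimum $q_1$: if $C>r_1(H)$ then $f>0$ on all of $(0,\infty)$ but the only positivity interval abuts $0$, so no complete solution exists (this is why $C\le r_1(H)$); if $C=r_1(H)$ the interval $(q_1,\infty)$ has an asymptotic left end and gives a type 2 solution; if $C<r_1(H)$ the interval above the larger root gives a type 3 solution. For $H=-1$ (item (3)) one computes $f\to C$ at infinity (from below), so the unbounded end is admissible only when $C>0$, and the same min-value comparison yields type 3 for $0<C<r_1(-1)$ and type 2 at $C=r_1(-1)$. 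For $-1<H<-\frac{2\sqrt{n-1}}{n}$ (item (4)) both critical points are present with $f\to-\infty$ at infinity; $C>r_1(H)$ or $C\le r_2(H)$ leave only a positivity interval touching $0$ and are excluded, while $C=r_1(H)$ gives the bounded interval $(q_1,v_3)$ of type 5 and $r_2(H)<C<r_1(H)$ gives a bounded interval between two simple roots of type 1. Finally, for $H\in[-\frac{2\sqrt{n-1}}{n},1]$ (item (1)) either there are no positive critical points, so $f$ decreases strictly from $+\infty$ to $-\infty$, or the single critical point at $H=-\frac{2\sqrt{n-1}}{n}$ is only an inflection; in every subcase the unique region where $f>0$ is an interval of the form $(0,v_1)$ bordering the origin, so no complete hypersurface occurs and $H$ is forbidden there. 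Existence of the immersion for each admissible $(H,C)$ then follows from Theorem \ref{cneq0aneq0} (or its $a=0$ analogue), exactly as at the end of the proof of Theorem \ref{hyp}.

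The step I expect to be the main obstacle is the completeness bookkeeping at the origin: one must use carefully that $f(v)\to+\infty$ as $v\to 0^+$ forces finite-time collapse, and therefore that every candidate interval touching $0$ must be discarded. This is the single point where the de Sitter case genuinely diverges from Theorem \ref{hyp}, and getting the strict-versus-non-strict inequalities on $C$ right, together with the correct matching of solution types at the boundary values $C=r_i(H)$, depends entirely on handling it precisely.
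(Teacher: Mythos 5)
Your proposal is correct and follows essentially the same route as the paper: reduce to the sign analysis of $f(v)=C+(H^2-1)v^2+v^{2-2n}+2Hv^{2-n}$, locate the critical points via the quadratic in $x=v^{-n}$, identify $r_i(H)$ as the critical values of $C$, and run the same case analysis on $H$, discarding every positivity interval that touches $v=0$. The only difference is that you actually justify that discarding step (integrability of $1/\sqrt{f}\sim v^{n-1}$ near the origin forces finite-time collapse), a point the paper merely asserts, so your write-up is, if anything, slightly more complete.
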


\begin{proof}
Notice that regardless of the values of $C$ and $H$ we have that the limit when $v\rightarrow 0^+$ is positive infinity. Moreover, the behavior of $f$ near zero does not allow to have a positive solution $g(t)$ with range containing an interval of the form $(0,v_1)$. % in this case, every solution $g(t)$ will reach the value zero with no derivative. 
The proof of the theorem follows the same arguments as those on Theorem \ref{hyp}

\begin{enumerate}

\item
When $-\infty<H<-1$, $q_1(H)$ is the only positive critical point of the function $f$ and it corresponds to a minimum. Moreover: (i) when $C>r_1(H)$, $f$ has not roots, it is  always positive. (ii) when $C=r_1(H)$, $f$ has only one root $v_1$ with multiplicity 2 and $f$ is positive on the intervals $(0,v_1)$ and the interval $(v_1,\infty)$. 
Figure \ref{aeq1beqn1ex1} shows particular choices for the parameters that produces an example of a function $f$ for this case.
The function $f$ on the $(0,v_1)$ does not provide a solution $g$ define on the whole real line. The function $f$ on the interval $(v_1,\infty)$ provides a {\it type 2} solution $g(t)$.  (ii) If  $C<r_1(H)$, then, $f$ has exactly two roots $v_1$ and $v_2$ and it is  only positive on the intervals $(0,v_1)$ and $(v_2,\infty)$.
\item
When $H=-1$, the function  $f$ has only one critical point which is a minimum. Moreover: (i) if  $C> r_1(-1)$ then $f$ is always positive. (ii) If  $C=r_1(-1)$, then, $f$ has exactly one root $v_1$ with multiplicity 2 and it is  positive on the intervals $(0,v_1)$ and $(v_1,\infty)$. (iii) If  $0<C<r_1(-1)$, then, $f$ has exactly two roots $v_1$ and $v_2$ and it is  only positive on the  intervals $(0,v_1)$ and $(v_1,\infty)$. (iv) If $C<0$ then $f$ has only one root $v_1$ and it is positive on the interval $(0,v_1)$.
\item
When $-1<H<-\frac{2\sqrt{n-1}}{n} $ then the function $f$ has two critical points, a local minimum $q_1(H)$ and, a local maximum  $q_2(H)$. Moreover, (i) If $C>r_1(H)$, then, the function $f$ only has one root $v_1$ and it is positive on the interval $(0,v_1)$ (ii) If $C=r_1(H)$, then $f$ has two roots $v_1$ and $v_2$ with $v_1$ a local minimum and $f$ is positive on the intervals $(0,v_1)$ and $(v_1,v_2)$. Figure \ref{aeq1beqn1ex3} shows particular choices for the parameters that produces an example of a function $f$ for this case. (iii) If $r_2(H)<C<r_1(H)$, then $f$ has three roots $v_1$,  $v_2$ and $v_3$ and $f$ is positive on the intervals $(0,v_1)$ and $(v_2,v_3)$.  Figure \ref{aeq1beqn1ex2} shows particular choices for the parameters that produces an example of a function $f$ for this case. (iv) If $C=r_2(H)$ then $f$ has two roots $v_1$ and $v_2$ with $v_2$ a local maximum. In this case $f$ is positive on the interval $(0,v_1)$. (v) If $C<r_2(H)$, then $f$ has only one root $v_1$ and it is positive on the interval $(0,v_1)$.

 \item
When $-\frac{2\sqrt{n-1}}{n}\le H<1$, then, for any $C$, $f$ has only one root $v_1$, and it is positive on the interval $(0,v_1)$.

 \item
When $H=1$, then the function $f$ does not have critical points. When $C\ge0$, then $f$ is always positive. When  $C<0$ then $f$ has only one root $v_1$ and it is positive on the interval $(0,v_1)$.

 \item
When $H>1$, then the function $f$ has only one critical point $q_1(H)$ that is a minimum. Moreover when $C> r_1(H)$, the function $f$ is always positive. When  $C=r_1(1)$, the function $f$ has one root $v_1=q_1(H)$, in this case $f$ is positive on the intervals $(0,v_1)$ and $(v_1,\infty)$. When $C<r_1(H)$, the function $f$ has only two roots $v_1$ and $v_2$  and it is positive on the intervals $(0,v_1)$ and  $(v_2,\infty)$.
\end{enumerate}

\end{proof}

%% SECTION ad=1 and bd=1

\subsection{Space of solutions in case: $ad=1$ and $bd=1$} In this section we describe the values of $(H,C)$ that provide complete immersions in $\sf$ when $ad=1$ and $bd=1$. This case includes  cmc hypersurfaces in the $(n+1)-$dimensional unit sphere.
In this case the differential equation for $g$ reduces to

$$g'(t)^2=f(g)=C - g^2 - H^2 g^2 - g^{2 - 2 n} - 2 H g^{2 - n}$$

In this case, for any $H$, the function $f$ has only one critical point $q_1(H)$. We have,

$$q_1,r_1:(-\infty,\infty)\to \R  $$

given by

$$ q_1=\left(\frac{\sqrt{h^2 n^2+4 n+4}-h (n-2)}{2 (n-1)}\right)^{-1/n}\com{and} r_1=\frac{n \left(h \sqrt{h^2 n^2+4 n-4}+h^2 n+2 n-2\right)}{2^{\frac{n-2}{n}} (n-1)^{\frac{2 (n-1)}{n}} \left(\sqrt{h^2 n^2+4 n-4}-h (n-2)\right)^{2/n}} $$

\begin{figure}[hbtp]
\begin{centering}\includegraphics[width=.4\textwidth]{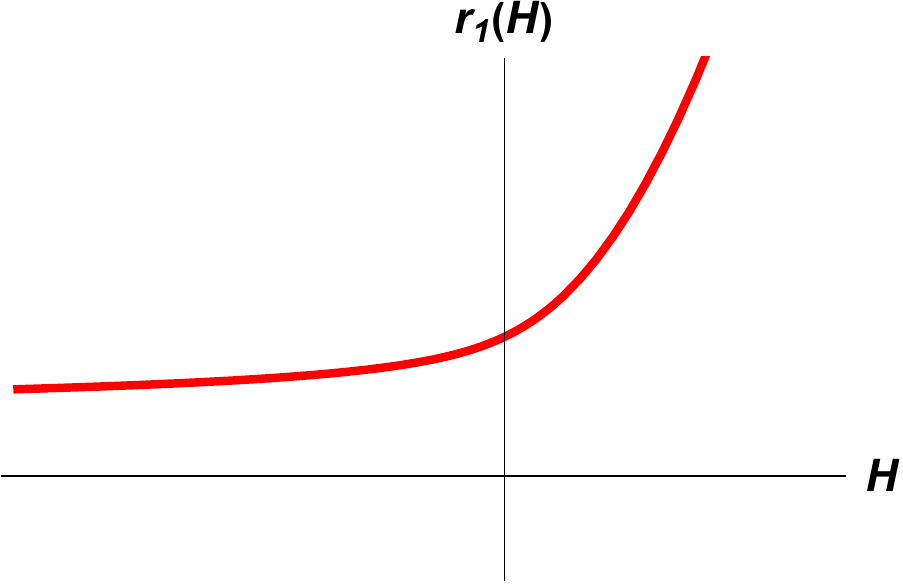} 
\end{centering}
\caption{Graph of the function $r_1(H)$.}\label{adeq1bedq1r}
\end{figure}

The following theorem summarizes the values of $H$ and $C$ that are achieved as values of non trivial complete hypersurfaces with two principal curvatures and cmc.

%Theorem sphere
\begin{thm} \label{Sph} Let $M\subset \sf$ be a complete hypersurface with two principal curvatures. Assume that $M$ has constant mean curvature $H$, that $C$ is the constant associated with $M$ and that the principal curvatures of $M$  are not constant.  Also assume that the sign of $H$ is determined by the condition $\lambda-\mu>0$.  If $n>2$, $ad=1$ and $bd=1$, then  
$H$ can be any real number, $C$ needs to be greater than $r_1(H)$ and $g(t)$ is a {\it type 1} solution.

\end{thm}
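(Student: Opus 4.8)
The plan is to follow the same template used in Theorems \ref{hyp} and \ref{deSit}: study the single-variable function $f(v)=C-v^2-H^2v^2-v^{2-2n}-2Hv^{2-n}$ on $(0,\infty)$, determine for which $(H,C)$ it is positive on some interval, and then read off completeness and the solution type from Remark \ref{rem ODE for g} and Proposition \ref{typers}. The feature that makes this case the cleanest of the three is that the coefficient governing the leading behavior, $1+H^2$, is strictly positive for every $H$, so no case splitting on $H$ is needed.

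First I would record the boundary behavior. Since $n>2$ we have $2-2n<0$, so the term $-v^{2-2n}$ gives $\lim_{v\to 0^+}f(v)=-\infty$; and since $1+H^2>0$ the quadratic part $-(1+H^2)v^2$ gives $\lim_{v\to\infty}f(v)=-\infty$. Hence any interval on which $f>0$ is a bounded subinterval of $(0,\infty)$. Next I would locate the critical points: differentiating, dividing by $2v$, and substituting $x=v^{-n}$ turns $f'(v)=0$ into the quadratic $(n-1)x^2+H(n-2)x-(1+H^2)=0$. Its product of roots is $-(1+H^2)/(n-1)<0$, so it has exactly one positive root for every $H$ (the discriminant $H^2n^2+4n-4$ is positive when $n>2$). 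Thus $f$ has a unique positive critical point $q_1(H)$, and since $f\to-\infty$ at both ends this critical point is necessarily the global maximum of $f$.

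It remains to compute that maximum and extract the threshold. Writing $f(v)=C-P(v)$ with $P(v)=(1+H^2)v^2+v^{2-2n}+2Hv^{2-n}$, the maximum value is $f(q_1(H))=C-P(q_1(H))=C-r_1(H)$, where $r_1(H)=P(q_1(H))$ is precisely $-f|_{C=0}$ evaluated at $q_1$. Therefore $f$ is positive somewhere if and only if $C>r_1(H)$. I would then check that $r_1(H)>0$ for every $H$ (a short estimate: squaring shows $|H|\sqrt{H^2n^2+4n-4}<H^2n+2(n-1)$, since the difference of squares equals $4(n-1)^2(H^2+1)>0$), so the admissible range forces $C>0$; in particular $C=0$ never occurs and the immersion is always the one of Theorem \ref{cneq0aneq0}. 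For $C>r_1(H)$, since the unique critical point is the maximum with $f(q_1(H))>0$ while $f\to-\infty$ at both ends, $f$ has exactly two simple roots $v_1<q_1(H)<v_2$ and is positive exactly on $(v_1,v_2)$ with neither endpoint critical; by case (i) of Proposition \ref{typers} the solution $g$ of $(g')^2=f(g)$ is periodic, i.e. of type 1, and is defined on all of $\mathbb{R}$. When $C=r_1(H)$ the maximum is $0$, attained only at $q_1$, giving the constant solution $g\equiv q_1$ that is excluded by the non-constancy hypothesis, and when $C<r_1(H)$ the function $f$ is strictly negative so no solution exists. The only genuinely delicate point is confirming, uniformly in $H$, that the positive critical point is unique and is a maximum; once the product-of-roots sign is in hand this is immediate, which is exactly why this case avoids the branching on $H$ present in Theorems \ref{hyp} and \ref{deSit}.
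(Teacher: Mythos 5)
Your proposal is correct and follows essentially the same route as the paper's proof: analyze $f$ on $(0,\infty)$, note that $f\to-\infty$ at both ends, show $f$ has a unique positive critical point $q_1(H)$ (necessarily the global maximum), and conclude that $f$ is positive precisely when $C>r_1(H)$, in which case it is positive on an interval $(v_1,v_2)$ between two simple roots, yielding a type 1 (periodic) solution by Proposition \ref{typers}. The paper states this more tersely by referring back to Theorem \ref{hyp}; your write-up simply fills in the details it leaves implicit, such as the quadratic in $x=v^{-n}$ giving uniqueness of the critical point, the positivity of $r_1(H)$, and the exclusion of the boundary case $C=r_1(H)$ via the non-constancy hypothesis.
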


\begin{proof}
Once again the proof follows the same arguments as those on Theorem \ref{hyp}. In this case we have that the limit of the function $f$ when $v\to 0^+$ and $v\to \infty$ is negative infinity. We also have that $f$ has only one critical point $q_1(H)$ and also, we have that if $C\le r_1(H)$ then $f$ is never positive and when $C>r_1(H)$ then $f$ has two roots $v_1$ and $v_2$ and it is positive on the interval $(v_1,v_2)$.

\end{proof}

%% SECTION ad=-1 and bd=-1

\subsection{Space of solutions in case: $ad=-1$ and $bd=-1$} In this section we describe the values of $(H,C)$ that provide complete immersions in $\sf$ when $ad=-1$ and $bd=-1$. This case includes  space-like cmc hypersurfaces in the $(n+1)-$dimensional Anti de Sitter space.
In this case the differential equation for $g$ reduces to

$$g'(t)^2=f(g)=C + g^2 + H^2 g^2 + g^{2 - 2 n} + 2 H g^{2 - n}$$

In this case, for any $H$, the function $f$ has only one critical point $q_1(H)$. We have,

$$q_1,r_1:(-\infty,\infty)\to \R  $$

given by

$$ q_1=\left(\frac{\sqrt{h^2 n^2+4 n+4}-h (n-2)}{2 (n-1)}\right)^{-1/n}\com{and} r_1=\frac{-n \left(h \sqrt{h^2 n^2+4 n-4}+h^2 n+2 n-2\right)}{2^{\frac{n-2}{n}} (n-1)^{\frac{2 (n-1)}{n}} \left(\sqrt{h^2 n^2+4 n-4}-h (n-2)\right)^{2/n}} $$

\begin{figure}[hbtp]
\begin{centering}\includegraphics[width=.4\textwidth]{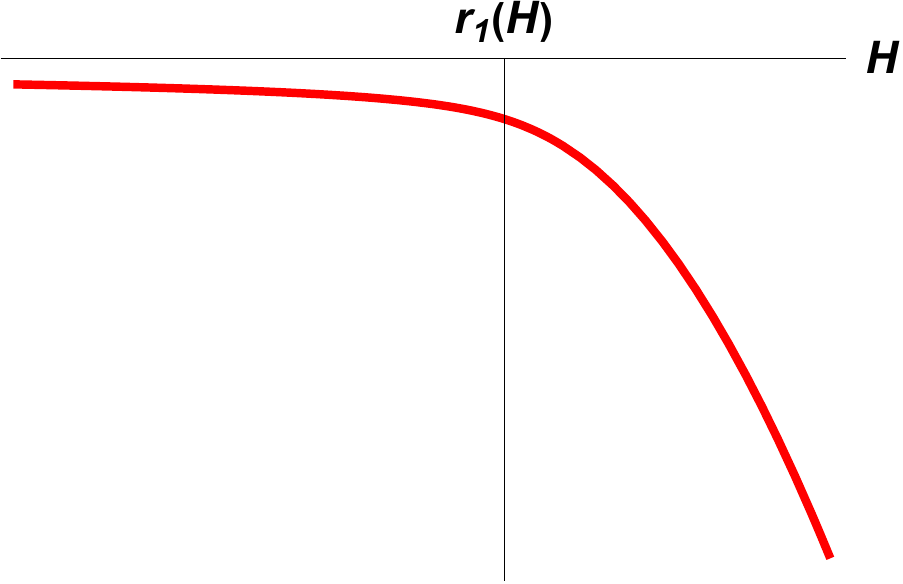} 
\end{centering}
\caption{Graph of the function $r_1(H)$. For this graph $n=12$}\label{adeqn1bdeqn1r}
\end{figure}

The following theorem summarizes the values of $H$ and $C$ that are achieved as values of non trivial complete hypersurfaces with two principal curvatures and cmc.

\begin{thm}\label{AntideSit} Let $M\subset \sf$ be a complete hypersurface with two principal curvatures. Assume that $M$ has constant mean curvature $H$, that $C$ is the constant associated with $M$ and that the principal curvatures of $M$  are not constant.  Also assume that the sign of $H$ is determined by the condition $\lambda-\mu>0$.  If $n>2$, $ad=-1$ and $bd=-1$, then  
$H$ can be any real number. Moreover  $C\le r_1(H)$ and $g(t)$ is a {\it type 2} solution when $C=r_1(H)$, otherwise $g(t)$ is a {\it type 3} solution.

\end{thm}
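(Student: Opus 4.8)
The plan is to follow the template already set in Theorem~\ref{hyp}. By Remark~\ref{cmc existence}, a complete cmc hypersurface with nonconstant principal curvatures yields a positive solution $g$ of $(g')^2=f(g)$ defined on all of $\mathbb{R}$, and conversely any such global positive solution is promoted to a genuine immersion by Theorem~\ref{cneq0aneq0} (note that $ad=-1$ forces $a\neq0$ here, so that theorem and its $C=0$ companion suffice). Thus the entire statement reduces to a qualitative study of the single function $f(v)=C+v^2+H^2v^2+v^{2-2n}+2Hv^{2-n}$ on $(0,\infty)$, with the solution type read off from the graph of $f$ through Remark~\ref{rem ODE for g} and the classification of Proposition~\ref{typers}. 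The normalization on the sign of $H$ fixed by $\lambda-\mu>0$ plays no role in the final range, since $H$ will be unrestricted.

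First I would record the boundary behaviour. Because $2-2n<2-n<0$ for $n>2$, the term $v^{2-2n}$ dominates as $v\to0^+$ and $(1+H^2)v^2$ dominates as $v\to\infty$, so $f(v)\to+\infty$ at both ends. To locate critical points I would set $f'(v)=0$, divide by $2v$, and substitute $x=v^{-n}$, obtaining the quadratic $(n-1)x^2+H(n-2)x-(1+H^2)=0$. Its discriminant is $H^2n^2+4n-4$, which is strictly positive for every real $H$ since $n>2$; and by Vieta the product of its roots equals $-(1+H^2)/(n-1)<0$, so exactly one root is positive. Hence $f$ has a unique positive critical point $q_1(H)$ for every $H$, and since $f\to+\infty$ at both ends this is a global minimum. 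Using that $r_1(H)$ is by definition the value of $-f$ at $q_1(H)$ taken with $C=0$, I get $f(q_1(H))=C-r_1(H)$. This is exactly why $H$ can be any real number, and it reduces everything to the sign of $C-r_1(H)$.

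The case analysis then runs as in Theorem~\ref{hyp}. When $C>r_1(H)$ the minimum is positive, so $f>0$ on all of $(0,\infty)$ and any solution is monotone; these are discarded from completeness by the integral criterion, since near $v=0$ one has $f\sim v^{2-2n}$ and $\int_0 v^{\,n-1}\,dv<\infty$, so the solution reaches the forbidden value $v=0$ in finite time and cannot extend to all of $\mathbb{R}$. When $C=r_1(H)$ the minimum is $0$, producing a double zero of $f$ at $q_1(H)$; on $(q_1(H),\infty)$ the integral $\int dv/\sqrt{f}$ diverges both at the double root and at $+\infty$ (where $f\sim v^2$), so the solution is defined on all of $\mathbb{R}$, is one-to-one, and never attains $q_1(H)$, i.e. a type~2 solution. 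When $C<r_1(H)$ the minimum is negative, so $f$ has two simple zeros $v_1<q_1(H)<v_2$; the branch on $(0,v_1)$ is again rejected because it runs into $v=0$ in finite time, while on $(v_2,\infty)$ the simple root $v_2$ is reached in finite time (the solution reflects) and $+\infty$ only in infinite time, giving a solution that attains its minimum $v_2$ exactly once, i.e. a type~3 solution.

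I expect the main obstacle to be the bookkeeping at the endpoints of each admissible interval: distinguishing which zeros of $f$ are reached in finite versus infinite time (simple root versus double root versus the asymptote at $v=0$) is precisely what separates complete from incomplete solutions and what forces the inequality $C\le r_1(H)$ with equality allowed. The finite-time blow-up toward $v=0$, which simultaneously eliminates the $C>r_1(H)$ regime and every $(0,v_1)$ branch, is the delicate point; the remainder is the limit-and-convexity analysis of $f$ together with a direct appeal to Proposition~\ref{typers}. Finally I would note that completeness of the resulting hypersurface is automatic once $g$ is global, because the auxiliary vector-valued system for $\alpha,\beta$ in Theorem~\ref{cneq0aneq0} is linear and therefore has solutions on all of $\mathbb{R}$.
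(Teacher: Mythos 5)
Your proposal is correct and follows essentially the same route as the paper: the paper's proof also reduces to the graph analysis of $f$ (limits $+\infty$ at both ends, unique positive critical point $q_1(H)$, case analysis on the sign of $C-r_1(H)$, elimination of branches running into $v=0$) carried over from Theorem \ref{hyp}. Your write-up in fact supplies details the paper leaves implicit, such as the quadratic in $x=v^{-n}$ locating $q_1(H)$ and the finite-versus-infinite time integral criteria distinguishing simple roots, double roots, and the singularity at $v=0$.
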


\begin{proof}
Once again the proof follows the same arguments as those on Theorem \ref{hyp}. In this case we have that the limit of the function $f$ when $v\to 0^+$ and $v\to \infty$ is positive infinity. We also have that $f$ has only one critical point $q_1(H)$ and also, we have that if $C> r_1(H)$ then $f$ is always positive, when $C=r_1(H)$ then $f$ one root $v_1$ which is a minimum and it is positive on the intervals $(0,v_1)$ and $(v_1,\infty)$.  Figure \ref{aeqn1beqn1ex1} shows particular choices for the parameters that produces an example of a function $f$ for this case. Finally, when $C<r_1(H)$. $f$ has two roots $v_1$ and $v_2$ and it is positive on the intervasl $(0,v_1)$ and $(v_2,\infty)$.

\end{proof}

%% SECTION a=0 and bd=-1

\subsection{Space of solutions in case: $a=0$ and $bd=-1$} In this section we describe the values of $(H,C)$ that provide complete immersions in $\sf$ when $a=0$ and $bd=-1$. This case includes  space-like cmc hypersurfaces in the $(n+1)-$dimensional Minkowski  space.
In this case the differential equation for $g$ reduces to

$$g'(t)^2=f(g)=C + H^2 g^2 + g^{2 - 2 n} + 2 H g^{2 - n}$$

In this case, for any $H\ne0$, the function $f$ has only one critical point $q_1(H)$. We have,

$$q_1,r_1:(-\infty,0)\cup(0,\infty)\to \R  $$

given by

$$q_1(H)=\begin{cases}  (-\frac{1}{h})^{\frac{1}{n}} & \hbox{if}\quad  H<0\\
\left(\frac{n-1}{h}\right)^{1/n} &\hbox{if}\quad  H>0
\end{cases},\quad r_1(H)=\begin{cases}  0 &\hbox{if}\quad  H\le0\\
-\frac{h^2 n^2 \left(\frac{n-1}{h}\right)^{2/n}}{(n-1)^2}&\hbox{if}\quad  H>0
\end{cases}  $$

\begin{figure}[hbtp]
\begin{centering}\includegraphics[width=.4\textwidth]{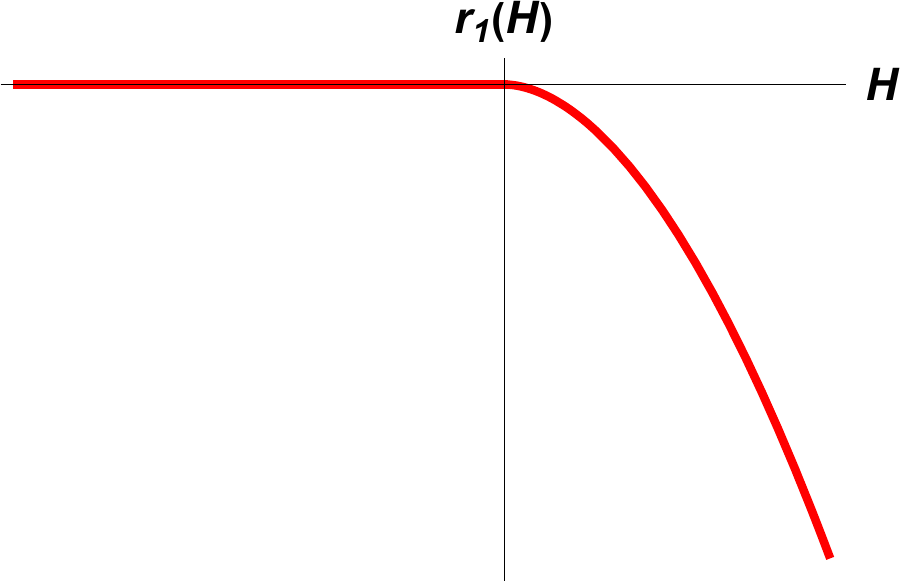} 
\end{centering}
\caption{Graph of the function $r_1(H)$.}\label{aeq0bdeqn1r}
\end{figure}

The following theorem summarizes the values of $H$ and $C$ that are achieved as values of non trivial complete hypersurfaces with two principal curvatures and cmc.

\begin{thm}\label{Mink} Let $M\subset \sf$ be a complete hypersurface with two principal curvatures. Assume that $M$ has constant mean curvature $H$, that $C$ is the constant associated with $M$ and that the principal curvatures of $M$  are not constant.  Also assume that the sign of $H$ is determined by the condition $\lambda-\mu>0$.  If $n>2$, $a=0$ and $bd=-1$, then  
$H$ can be any real number different from zero. Moreover  $C\le r_1(H)$ and $g(t)$ is a {\it type 2} solution when $C=r_1(H)$, otherwise $g(t)$ is a {\it type 3} solution.

\end{thm}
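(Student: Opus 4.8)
The plan is to follow the same strategy used in the proof of Theorem \ref{hyp}: analyze the graph of $f$ on $(0,\infty)$, locate its critical points and roots, and then read off the type of the complete solution $g$ from Proposition \ref{typers} together with Remark \ref{rem ODE for g}. The starting observation is the boundary behavior. Since $a=0$ and $bd=-1$, the dominant term of $f(v)=C+H^2v^2+v^{2-2n}+2Hv^{2-n}$ as $v\to 0^+$ is $v^{2-2n}$, so $f(v)\to+\infty$; because $2-2n<0$, the integral $\int_0 f(u)^{-1/2}\,du$ converges, which means any trajectory descending toward $v=0$ reaches it in finite time. Hence, exactly as in the proof of Theorem \ref{deSit}, no complete positive solution can have a range whose closure contains $0$, and every complete solution must live on an interval bounded away from the origin. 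When $H\neq 0$ the term $H^2v^2$ forces $f(v)\to+\infty$ also as $v\to\infty$, so $f$ has a genuine potential well; when $H=0$, however, $f(v)=C+v^{2-2n}$ is strictly decreasing with $f(v)\to C$, so its positive set is always a right neighborhood of $0$ (an interval $(0,v_1)$ or all of $(0,\infty)$) and therefore carries no complete solution. This is what excludes $H=0$ and accounts for item (1).

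Next I would locate the critical points. Differentiating and dividing by $2v$ reduces $f'(v)=0$ to the quadratic $(n-1)x^2+H(n-2)x-H^2=0$ in the variable $x=v^{-n}$, whose discriminant is $H^2\big((n-2)^2+4(n-1)\big)=H^2n^2$. For each sign of $H$ exactly one of the two roots is positive, giving the unique positive critical point $q_1(H)$ recorded before the theorem ($v^{-n}=-H$ when $H<0$, and $v^{-n}=H/(n-1)$ when $H>0$); since $f\to+\infty$ at both ends of $(0,\infty)$, this critical point is a minimum. Substituting the value of $v^{-n}$ at $v=q_1(H)$ into $f$ with $C=0$ yields precisely $-r_1(H)$, so the minimum value of $f$ equals $C-r_1(H)$, and the shape of the graph is governed entirely by the sign of $C-r_1(H)$.

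The case analysis then follows. If $C>r_1(H)$ the minimum is positive, so $f>0$ on all of $(0,\infty)$; the only positive interval touches $0$ and hence yields no complete solution, so these $(H,C)$ are excluded. If $C=r_1(H)$ the minimum is zero and $q_1(H)$ is a double root: on $(q_1(H),\infty)$ the function $f$ is positive, its left endpoint is a critical point approached only as $t\to-\infty$, and its right end is reached in infinite time, so by Remark \ref{rem ODE for g} the solution is unbounded with no minimum, i.e. {\it type 2}; the component $(0,q_1(H))$ again touches the origin and is discarded. If $C<r_1(H)$ the minimum is negative and $f$ has two simple roots $v_1<v_2$ with $f>0$ on $(0,v_1)$ and on $(v_2,\infty)$; the lower interval is discarded, while on $(v_2,\infty)$ the simple root $v_2$ is attained once (a minimum of $g$) and $\infty$ is approached in infinite time, giving a {\it type 3} solution. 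Finally, existence of the corresponding hypersurface is guaranteed by the explicit immersions of Section \ref{explicitEx}: Theorem \ref{cneq0aeq0} when $C\neq 0$, and Theorem \ref{ceq0aeq0} when $C=0$ (which occurs only on the boundary $H<0$, where $r_1(H)=0$).

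I expect the main obstacle to be the completeness bookkeeping near $v=0$: one must argue carefully that the positive components of $\{f>0\}$ whose closure meets the origin cannot support a solution defined on the whole line, and that this is exactly what pins the admissible region down to $C\le r_1(H)$ and rules out $H=0$. Everything else is the routine verification that the quadratic for the critical points behaves as claimed and that the root multiplicities match the five types of Proposition \ref{typers}.
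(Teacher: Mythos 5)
Your proposal is correct and follows essentially the same route as the paper's proof: reduce everything to the graph of $f$ on $(0,\infty)$, note that $f\to+\infty$ as $v\to 0^+$ so that positive components touching the origin carry no complete solution, rule out $H=0$ by monotonicity of $f$, and for $H\ne 0$ split into the cases $C>r_1(H)$, $C=r_1(H)$, $C<r_1(H)$ according to the sign of the minimum value $C-r_1(H)$ at the unique critical point $q_1(H)$, reading off types 2 and 3 from Proposition \ref{typers}. Your write-up is in fact somewhat more detailed than the paper's (the convergence of $\int_0 f^{-1/2}$ justifying incompleteness near $v=0$, and the explicit verification that $f(q_1(H))=C-r_1(H)$ are left implicit there), but the strategy is identical.
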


\begin{proof}
Once again the proof follows the same arguments as those on Theorem \ref{hyp}. In this case we have that the limit of the function $f$ when $v\to 0^+$ is positive infinity. When $H=0$, $f$ is always decreasing and therefore, either $f$ is positive everywhere or $f$ is positive on a interval $(0,v_1)$, in either case, $f$ dos not define a solution $g(t)$ in the whole real line. When $H\ne 0$,  $f$ has only one critical point $q_1(H)$ and also, we have that if $C> r_1(H)$ then $f$ is always positive, when $C=r_1(H)$ then $f$ one root $v_1$ which is a minimum and it is positive on the intervals $(0,v_1)$ and $(v_1,\infty)$.  Figure \ref{aeq0beqn1ex1} shows particular choices for the parameters that produces an example of a function $f$ for this case. Finally, when $C<r_1(H)$. $f$ has two roots $v_1$ and $v_2$ and it is positive on the intervals $(0,v_1)$ and $(v_2,\infty)$.

\end{proof}

%% SECTION a=0 and bd=1

\subsection{Space of solutions in case: $a=0$ and $bd=1$} In this section we describe the values of $(H,C)$ that provide complete immersions in $\sf$ when $a=0$ and $bd=1$. This case includes  cmc hypersurfaces in the $(n+1)-$dimensional Euclidean space.
In this case the differential equation for $g$ reduces to

$$g'(t)^2=f(g)=C - H^2 g^2 - g^{2 - 2 n} - 2 H g^{2 - n}$$

In this case, for any $H\ne0$, the function $f$ has only one critical point $q_1(H)$. We have,

$$q_1,r_1:(-\infty,0)\cup(0,\infty)\to \R  $$

given by

$$q_1(H)=\begin{cases}  (-\frac{1}{h})^{\frac{1}{n}} & \hbox{if}\quad  H<0\\
\left(\frac{n-1}{h}\right)^{1/n} &\hbox{if}\quad  H>0
\end{cases},\quad r_1(H)=\begin{cases}  0 &\hbox{if}\quad  H\le0\\
\frac{h^2 n^2 \left(\frac{n-1}{h}\right)^{2/n}}{(n-1)^2}&\hbox{if}\quad  H>0
\end{cases}  $$

\begin{figure}[hbtp]
\begin{centering}\includegraphics[width=.4\textwidth]{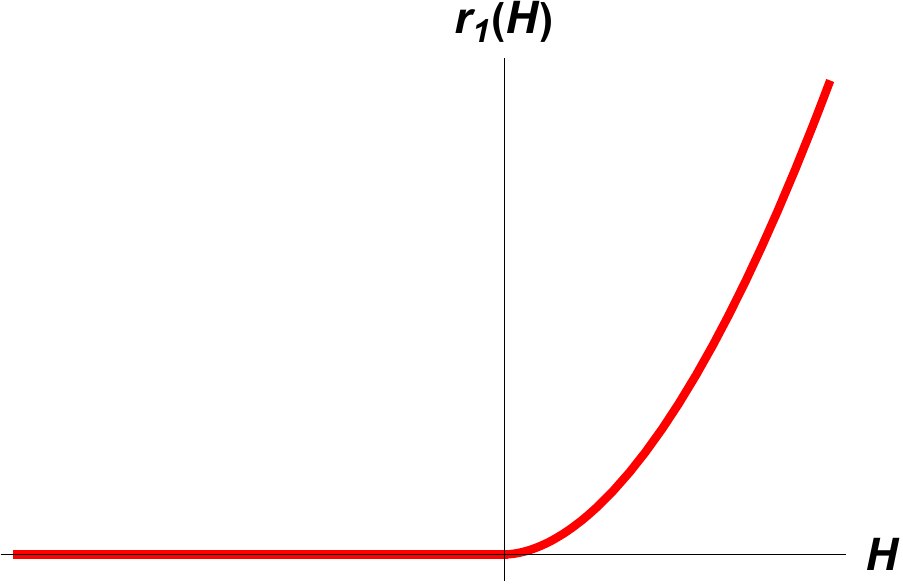} 
\end{centering}
\caption{Graph of the function $r_1(H)$. For this graph $n=12$.}\label{aeq0bdeq1r}
\end{figure}

The following theorem summarizes the values of $H$ and $C$ that are achieved as values of non trivial complete hypersurfaces with two principal curvatures and cmc.

\begin{thm}\label{Euc} Let $M\subset \sf$ be a complete hypersurface with two principal curvatures. Assume that $M$ has constant mean curvature $H$, that $C$ is the constant associated with $M$ and that the principal curvatures of $M$  are not constant.  Also assume that the sign of $H$ is determined by the condition $\lambda-\mu>0$.  If $n>2$, $a=0$ and $bd=1$, then  
$H$ can be any real number. Moreover, when $H\ne0$, $C> r_1(H)$ and $g(t)$ is a {\it type 1} solution and when $H=0$, $C>0$ and   $g(t)$ is a {\it type 3} solution.

\end{thm}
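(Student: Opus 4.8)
The plan is to reduce the statement entirely to the one-variable analysis of the function $f(v)=C-H^2v^2-2Hv^{2-n}-v^{2-2n}$ on $(0,\infty)$, exactly as in the proof of Theorem \ref{hyp}. By Remark \ref{cmc existence}, associated to $M$ there is a positive solution $g=w\circ\gamma$ of $(g')^2=f(g)$ defined on all of $\mathbb{R}$, so the constraints on $(H,C)$ and the type of $g$ are dictated by when such a global positive solution can exist; conversely, whenever $f$ admits such a solution, Theorem \ref{cneq0aeq0} produces a complete immersion (note that in every admissible case below $C>0$, hence $C\ne0$ and that theorem applies). Thus it suffices to determine, via Remark \ref{rem ODE for g} and Proposition \ref{typers}, the shape of the graph of $f$. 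First I would record the boundary behaviour: since $2-2n<0$, the term $-v^{2-2n}$ forces $f(v)\to-\infty$ as $v\to0^{+}$ for every $H$ and $C$. At the other end the two regimes diverge: for $H\ne0$ the term $-H^2v^2$ gives $f(v)\to-\infty$ as $v\to\infty$, whereas for $H=0$ one has $f(v)=C-v^{2-2n}\to C$.

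Next I would locate the critical points. Writing $f'(v)=0$ and substituting $y=v^{n}$ reduces the equation to the quadratic $H^2y^2+H(2-n)y+(1-n)=0$, whose discriminant equals $H^2n^2$. For $H\ne0$ this has a unique positive root, namely $y=(n-1)/H$ when $H>0$ and $y=-1/H$ when $H<0$; taking $n$-th roots recovers the stated value of $q_1(H)$, and since $f\to-\infty$ at both ends this single critical point is necessarily a global maximum. For $H=0$ there are no critical points and $f'(v)=(2n-2)v^{1-2n}>0$, so $f$ is strictly increasing on $(0,\infty)$.

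The heart of the argument is the sign of the maximum. A direct substitution shows $f(q_1(H))=C-r_1(H)$, where $r_1(H)$ is exactly the value of $H^2v^2+2Hv^{2-n}+v^{2-2n}=v^2(H+v^{-n})^2$ at $v=q_1(H)$; for $H<0$ one computes $H+q_1(H)^{-n}=0$, which collapses $r_1(H)$ to $0$, while for $H>0$ the same computation yields the displayed closed form. Hence for $H\ne0$ the graph of $f$ rises from $-\infty$, peaks at $C-r_1(H)$, and returns to $-\infty$: if $C>r_1(H)$ there are exactly two simple roots $v_1<v_2$ with $f>0$ on the bounded interval $(v_1,v_2)$ and neither endpoint critical, which is case (i) of Proposition \ref{typers} and yields a \emph{type 1} (periodic) solution; if $C\le r_1(H)$ the maximum is nonpositive and $f$ is never positive, so no nonconstant complete example exists (the boundary value $C=r_1(H)$ gives only the constant solution $g\equiv q_1(H)$, which has constant principal curvatures and is excluded). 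For $H=0$, the monotone graph is positive precisely on a half-line $(v_1,\infty)$ when $C>0$, with $v_1$ not a critical point, which is case (iii) of Proposition \ref{typers} and yields a \emph{type 3} solution, while $C\le0$ gives no positive solution defined on all of $\mathbb{R}$. Since in both admissible regimes $g$ is non-constant, so is $\kappa_1=H+g^{-n}$, and choosing an admissible $C$ for each $H$ realizes every real value of $H$.

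I do not expect a genuine obstacle here, since the content is the elementary calculus of the one-variable function $f$. The two points demanding care are the bookkeeping that the single critical point is a maximum—which rests on the two-sided $-\infty$ behaviour and is precisely where the $H=0$ case breaks away into its own regime, producing an unbounded positive half-line rather than a bounded interval—and the verification that $f(q_1(H))=C-r_1(H)$ matches the two-branch formula for $r_1$, in particular the cancellation $H+q_1(H)^{-n}=0$ responsible for $r_1(H)=0$ when $H<0$.
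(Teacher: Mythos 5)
Your proposal is correct and follows essentially the same route as the paper: the paper's proof of Theorem \ref{Euc} simply invokes the argument of Theorem \ref{hyp}, namely the sign analysis of the one-variable function $f$ (boundary behaviour at $0^+$ and $\infty$, the single critical point $q_1(H)$ when $H\ne0$, monotonicity when $H=0$, and comparison of $C$ with $r_1(H)$), combined with Proposition \ref{typers} and the existence of the immersion from the explicit constructions. Your write-up merely makes explicit the computations the paper leaves implicit (the quadratic in $y=v^n$ for the critical points, the identity $f(q_1(H))=C-r_1(H)$ with the cancellation $H+q_1(H)^{-n}=0$ for $H<0$, and the observation that $C>0$ in every admissible case so Theorem \ref{cneq0aeq0} applies), which is a faithful elaboration rather than a different method.
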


\begin{proof}
Once again the proof follows the same arguments as those on Theorem \ref{hyp}. In this case we have that the limit of the function $f$ when $v\to 0^+$ is negative infinity. When $H=0$, $f$ is always increasing and, either $f$ is negative everywhere or $f$ is positive on a interval $(v_1,\infty)$. When $H\ne 0$,  $f$ has only one critical point $q_1(H)$ and also, we have that if $C\le r_1(H)$ then $f$ is never positive, and  when $C> r_1(H)$. $f$ has two roots $v_1$ and $v_2$ and it is positive on the intervals $(v_1,v_2)$.

\end{proof}

%%  THE SHAPE OPERATOR.

\section{The norm square of the second fundamental form} \label{s2 results}

We start this section by computing  the norm of the second fundamental form, the norm of the traceless  second fundamental form and the scalar curvature in terms of the function $g(t)$. As expected, there are linear relation between them and therefore when we find estimates for  one of them, we find estimates for all of them. For convenience, we present estimates on the norm of the traceless second fundamental form.

\subsubsection{The norm of the second fundamental form and the traceless second fundamental form for riemannian hypersurfaces} Since $\lambda=H+g^{-n}$ and $\mu=H-(n-1) H$, we can easily deduce that the square of the norm of the shape operator is given by 

$$|A|^2=(n-1)\lambda^2+\mu^2=n(n-1)g^{-2n}+nH^2$$

If we consider the traceless second fundamental form, which is defined by $\Phi=A-nI$ where $I$ is the identity operator, then we have that 

$$ |\Phi|^2= |A|^2-nH^2= n(n-1)g^{-2n}$$

\subsubsection{The scalar curvature for riemannian hypersurfaces} As shown in Section \ref{basic equations}, the curvature tensor has the following expression

$$
\<R(w_1,w_2)w_3,w_4\>=a(\<w_1,w_3\>\<w_2,w_4\>-\<w_1,w_4\>\<w_2,w_3\>)+b(\<A(w_3),w_1\>\<A(w_2),w_4\>-\<A(w_3),w_2\>\<A(w_1),w_4\>)
$$

in the case that the $M$ is Riemmanian, and the orhonormal frame  $\{e_1,\dots, e_n\}$ is defined as in Definition (\ref{two principal curvatures general}), then $\<e_i,e_j\>=\delta_{ij}$ and then the Ricci curvature is given by 

\begin{eqnarray*}
Ric(e_1)&=&\frac{1}{n-1}(R(e_1,e_2,e_1,e_2)+\dots+R(e_1,e_{n-1},e_1,e_{n-1})+R(e_1,e_n,e_1,e_n))\\
  &=& \frac{1}{n-1}((n-2) (a+b\lambda^2)+a+b\lambda \mu)=a+\frac{n-2}{n-1}b\lambda^2+\frac{b}{n-1} \lambda\mu
\end{eqnarray*} 
 
 Likewise, for any $i<n$ we have that 

$$Ric(e_i)=a+\frac{n-2}{n-1}b\lambda^2+\frac{b}{n-1} \lambda\mu\com{and} Ric(e_n)=a+b\lambda\mu
$$

Therefore we have that the scalar curvature $S$ is given by 

$$S=\frac{1}{n}\sum_{j=1}^n Ricc(e_j) = a+\frac{b}{n}((n-1)\lambda^2+2\lambda\mu)=a+b H^2-bg^{-2n}$$

Our complete classification of hypersurfaces with two principal curvatures allows us to exactly find the superior and the inferior of the norm of the second fundamental form in terms of the positive  roots of a polynomial that depends on the constant $C$ of the cmc hypersurfaces.  With the intension of providing estimates on the norm of the traceless second fundamental form that are independent of $C$, we decided to improved and extend the main result by Alias and Garcia-Mart\'{\i}nez in \cite{A-M}. The proof of our estimate easily follows from the  following remark,

\begin{rem} In this remark, $v_1$, $v_2$ and $v_3$ denote roots of an explicit  polynomial given in terms of $H$, $C$ and $n$. For any complete cmc hypersurface with exactly two (non-contant) principal curvatures, the function  $g(t)=\omega(\gamma(t))$ is a function of type 1,2,3,4 or 5. Moreover we have:
\begin{itemize}
\item
If $g(t)$ is of type 1, then $g(t)$ has range $[v_1,v_2]$ where $v_1$ and $v_2$ with $0<v_1<q(H)<v_2$ where $q(H)$ can be easily computed. Therefore we can easily say that $\max(g(t))>q(H)>\min(g(t))$.

\item
If $g(t)$ is of type 2, then $g(t)$ has range $(q(H),\infty)$  where $q(H)$ can be easily computed. In this case the infimum of $g(t)$ is $q(H)$ and it is never achieved and the the infimum of $g(t)^{-n}$ is zero.

\item
If $g(t)$ is of type 3, then $g(t)$ has range $[v_3,\infty)$ . In this case  $\min(g(t))=v_3$  and  the infimum of $g(t)^{-n}$ is zero.

\item
If $g(t)$ is of type 4, then $g(t)$ has range $[v_1,q(H))$, where $q(H)$ can be easily computed. In this case  $\min(g(t))=v_1$ and $\sup(g(t))=q(H)$, this superior is never achieved.
\item
If $g(t)$ is of type 5, then $g(t)$ has range $(q(H),v_1]$, where $q(H)$ can be easily computed. In this case  $\max(g(t))=v_1$ and $\inf(g(t))=q(H)$, this inferior is never achieved.

\end{itemize}
\end{rem}

Recall that the sign of $H$ is important in our classification due to the following remark.

\begin{rem}\label{defH} Let us assume that $M\subset \sf$ is  hypersurfaces with two principal curvatures, $\lambda$ and  $\mu$ with multiplicity $n-1$ and $1$ respectively and with $n>2$. There is a unique way to define the Gauss map so that the function $\lambda-\mu$ is positive. This choice of the Gauss map defines a unique $H$. In this way the hypersurfaces with positive $H$ and negative $H$ are different. 
\end{rem}

In order to present the result it is convenient to define the following three functions

\begin{mydef} We define

\begin{eqnarray*}
b_1(H)&=&\frac{\sqrt{n} \left(\sqrt{H^2 n^2+4ab(n-1)}-H (n-2)\right)}{2 \sqrt{(n-1)}}\\
b_2(H)&=& -\frac{n \left(\sqrt{H^2 n^2+4ab(n-1)}+H (n-2)\right)}{2 \sqrt{(n-1) n}}\\
b_3(H)&=&\begin{cases}  -\sqrt{n(n-1)} H & \hbox{if}\quad  H<0\\
& \\
 \frac{\sqrt{n}}{\sqrt{n-1}} H &\hbox{if}\quad  H\ge0
\end{cases}\\
\end{eqnarray*}

\end{mydef}

\begin{rem} Notice that the expression $b_1(H)$ agrees with the expression $\beta_{|H|,c}$ in the paper \cite{A-M} when $H<-1$ but $ b_1(H)<\beta_{|H|,c}$ when $H>1$. For this reason the bounds in this paper in the case $H>1$ represent an improvement of those in the paper \cite{A-M}.
\end{rem}

%% Shape Hyperbolic space

\subsection{Hypersurfaces in Hyperbolic spaces: case $ad=-1$ and $bd=1$}

\begin{thm} \label{hypshape} Let $M\subset \sf$ be a complete hypersurface with two principal curvatures, one of them with multiplicity one. Assume that $M$ has constant mean curvature $H$.  Also assume that the sign of $H$ is determined by the condition $\lambda-\mu>0$. If $n>2$, $ad=-1$ and $bd=1$ then,

\begin{enumerate}
\item
If $|H|>1$, then $ |\Phi|$ is periodic and $0<\min( |\Phi|)\le b_1(H)\le \max( |\Phi|)$ with equality holding in any of the two inequalities  if and only if the principal curvatures are constant. 

\item
When $H=-1$, then either 
\begin{itemize}
\item
$\inf( |\Phi|)=0$ and $b_1(H)< \sup( |\Phi|)$ or 
\item
$\inf( |\Phi|)>0$,  $|\Phi|$ is periodic and $\min( |\Phi|)\le b_1(H)\le \max( |\Phi|)$ with equality holding in any of the two inequalities  if and only if the principal curvatures are constant. 
\end{itemize}
Both types of examples exist.  

\item
When $-1<H<-\frac{2\sqrt{n-1}}{n} $ then either
\begin{itemize}
\item
$\inf( |\Phi|)=0$ and $\sup (|\Phi|)\le b_1(H)$

\item
$b_2(H)\le \inf( |\Phi|)\le b_1(H)\le \sup( |\Phi|)$. In this case there exist examples with non-constant principal curvatures that satisfy $b_2(H)= \inf( |\Phi|)$. Moreover, $b_1(H)= \sup( |\Phi|)$ holds  if and only if the principal curvatures are constant.

 \end{itemize}
 
 There are examples with $\inf( |\Phi|)>0$ and $\inf( |\Phi|)=0$
 \item
When $-\frac{2\sqrt{n-1}}{n}\le H\le1$, then $\inf( |\Phi|)=0$. There are examples with non constant principal curvatures with $H=-\frac{2\sqrt{n-1}}{n}$ that satisfy $\sup( |\Phi|)=b_2(H)=b_1(H)$.
\end{enumerate} 
\end{thm}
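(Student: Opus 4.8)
The plan is to reduce the entire theorem to the already-completed analysis of the ODE in Theorem \ref{hyp}, using the single algebraic identity
$$|\Phi|^2 = n(n-1)\, g^{-2n}$$
established at the beginning of this section, where $g(t)=w(\gamma(t))$ runs along an integral curve of $e_n$. Since $g>0$, the map $g\mapsto \sqrt{n(n-1)}\, g^{-n}$ is a strictly decreasing diffeomorphism of $(0,\infty)$ onto $(0,\infty)$, so $|\Phi|=\sqrt{n(n-1)}\, g^{-n}$ and controlling $|\Phi|$ is exactly controlling $g$ with orientation reversed: $\sup|\Phi|$ corresponds to $\inf g$ and $\inf|\Phi|$ to $\sup g$, each extremum of $|\Phi|$ being attained precisely when the corresponding extremum of $g$ is attained. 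In particular $\inf|\Phi|=0$ if and only if $g$ is unbounded above.

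First I would identify the constants $b_1(H)$ and $b_2(H)$ as the values of $|\Phi|$ at the two positive critical points of $f$. In the present case $ad=-1$ and $bd=1$ force $ab=-1$, so the critical points $q_1(H), q_2(H)$ of $f$ satisfy $q_i(H)^{-n}=\frac{\pm\sqrt{H^2n^2-4(n-1)}-H(n-2)}{2(n-1)}$, exactly the values recorded in Theorem \ref{hyp}. Substituting into $\sqrt{n(n-1)}\, q_i^{-n}$ gives $\sqrt{n(n-1)}\, q_1^{-n}=b_1(H)$ and $\sqrt{n(n-1)}\, q_2^{-n}=b_2(H)$. Thus $b_1(H)$ and $b_2(H)$ are simply $|\Phi|$ read off at the local maximum $q_1$ and the local minimum $q_2$ of $f$ (here $q_1<q_2$ as $v$-values, since $H<0$ makes $q_1^{-n}>q_2^{-n}$). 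This turns every assertion of the theorem into a statement about the position of $\inf g$ and $\sup g$ relative to $q_1$ and $q_2$.

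Next I would go case by case, importing directly the root/critical-point configurations already determined in the proof of Theorem \ref{hyp} and in Proposition \ref{typers}, and translating them through the decreasing substitution. For $|H|>1$ the solution is type $1$, with range $[v_1,v_2]$ and $v_1<q_1<v_2$; hence $|\Phi|$ is periodic, $\inf|\Phi|=\min|\Phi|>0$, and $\min|\Phi|\le b_1(H)\le \max|\Phi|$, the value $b_1(H)$ being attained at $g=q_1$. A bounded range with $v_1<v_2$ makes both inequalities strict, while equality in either collapses $[v_1,v_2]$ to the single point $q_1$, i.e. $g$ constant, which by Remark \ref{multiplicities} means constant principal curvatures. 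The boundary cases $H=\pm1$, $-1<H<-\tfrac{2\sqrt{n-1}}{n}$, and $-\tfrac{2\sqrt{n-1}}{n}\le H\le1$ are handled identically: whether $\sup|\Phi|$ lies below or above $b_1(H)$ is dictated by whether $\inf g$ lies to the right or to the left of $q_1$; the value $b_2(H)$ appears exactly when an extremum of $g$ is pinned at the local minimum $q_2$ (the type $2$ and type $4$ solutions); and $\inf|\Phi|=0$ is read off from the unboundedness of type $2$ and type $3$ solutions. The existence claims ("both types of examples exist", "there exist examples with $b_2(H)=\inf|\Phi|$", and the coincidence $\sup|\Phi|=b_2(H)=b_1(H)$ at $H=-\tfrac{2\sqrt{n-1}}{n}$, where the discriminant vanishes and $q_1=q_2$) are precisely the realizability statements already proved in Theorem \ref{hyp}.

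The routine content is entirely in the monotone substitution; the main obstacle is bookkeeping. For each subcase of Theorem \ref{hyp} one must correctly decide three things: whether $g$ is bounded above (this alone separates $\inf|\Phi|>0$ from $\inf|\Phi|=0$); whether the relevant extremum of $g$ is a transverse root of $f$ (hence attained, giving a strict inequality) or one of the critical points $q_1,q_2$ (hence only a supremum/infimum, giving equality with $b_1$ or $b_2$); and the ordering of the roots of $f$ relative to $q_1<q_2$, which fixes the sign of $\sup|\Phi|-b_1(H)$. Once these are tabulated, each clause of the theorem is immediate, and the "if and only if" clauses follow from the observation that any equality among $\min|\Phi|$, $b_1(H)$, $\max|\Phi|$ forces the $g$-range to degenerate to a point, forcing $g$ constant and hence, by Remark \ref{multiplicities}, forcing the hypersurface to be isoparametric.
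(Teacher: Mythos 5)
Your proposal is correct and is exactly the paper's argument: the paper's entire proof of this theorem is the single line that it ``follows directly from Theorem \ref{hyp}'', which is precisely the reduction you carry out via $|\Phi|=\sqrt{n(n-1)}\,g^{-n}$ together with the identifications $b_1(H)=\sqrt{n(n-1)}\,q_1(H)^{-n}$ and $b_2(H)=\sqrt{n(n-1)}\,q_2(H)^{-n}$. Your write-up simply makes explicit the bookkeeping (the order-reversing substitution, attained versus unattained extrema at simple roots versus critical points of $f$, and the collapse of the range of $g$ in the equality cases via Remark \ref{multiplicities}) that the paper leaves to the reader.
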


\begin{proof} The proof follows directly from Theorem \ref{hyp}. 

\end{proof}

Figures \ref{aeqn1beq1ex3}, \ref{aeqn1beq1ex2}, \ref{aeqn1beq1ex4} and \ref{aeqn1beq1ex1}  show some the graph of the function $|\Phi|=\sqrt{n(n-1)} \omega^{-n}(t)$ for some particular interesting hypersurfaces discussed in this Theorem.

\begin{rem} If we use the same values for $C$ and $H$ as in the Examples displayed in Figures \ref{aeqn1beq1ex3} and \ref{aeqn1beq1ex2} and the initial condition $g(0)=q_2(H)=\sqrt[4]{\frac{30}{9-2 \sqrt{6}}}$, 
then we would obtain an example with constant principal curvatures. In  \cite{W5}, Theorem 3.1, it is wrongly stated that the only examples with $C=r_2(H)$ are those with constant principal curvatures.

\end{rem}

%
%shape de Sitter
%

\subsection{Hypersurfaces of the de Sitter Spaces: case $ad=1$ and $bd=-1$} 
\begin{thm} \label{deSittershape} Let $M\subset \sf$ be a complete hypersurface with two principal curvatures, one of them with multiplicity one. Assume that $M$ has constant mean curvature $H$.  Also assume that the sign of $H$ is determined by the condition $\lambda-\mu>0$. If $n>2$, $ad=1$ and $bd=-1$, 

\begin{enumerate}
\item
$H$ cannot take values between $[-2\frac{\sqrt{n-1}}{n},1]$
\item
When either $|H|>1$ or $H=-1$, then $\inf(|\Phi|)=0$ and $\sup(|\Phi|)\le b_1(H)$. There are examples with non constant principal curvature that satisfy $\sup(|\Phi|) = b_1(H)$.

\item
When $-1<H<-\frac{2\sqrt{n-1}}{n} $, then $\inf( |\Phi|)\le b_2(H)\le \sup( |\Phi|)\le b_1(H)$. Moreover, $\inf( |\Phi|)=b_2(H)$ if and only if the principal curvatures are constant. Moreover, there are examples with non constant principal curvatures that satisfy $\sup( |\Phi|)=b_1(H)$. 
%Theorem 4.2 (3) is incorrect. There are non-periodic solutions.

\end{enumerate} 

\end{thm}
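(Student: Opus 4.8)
The plan is to read off the entire statement from the ODE classification in Theorem~\ref{deSit} by means of the pointwise identity $|\Phi|^2=n(n-1)g^{-2n}$, equivalently $|\Phi|=\sqrt{n(n-1)}\,g^{-n}$, established at the beginning of this section. Since $g>0$ and $v\mapsto \sqrt{n(n-1)}\,v^{-n}$ is strictly decreasing, the supremum of $|\Phi|$ along $M$ is attained where $g$ is smallest and the infimum of $|\Phi|$ where $g$ is largest; hence every assertion about $\inf|\Phi|$ and $\sup|\Phi|$ becomes an assertion about $\sup g$ and $\inf g$, which is precisely the data recorded by the five solution types of Proposition~\ref{typers} as specialized to the de Sitter case in Theorem~\ref{deSit}.

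First I would establish the dictionary between the bounds $b_1,b_2$ and the critical points $q_1,q_2$ of $f$. Using $ab=-1$ for this case, a direct algebraic comparison of the closed forms gives $b_1(H)=\sqrt{n(n-1)}\,q_1(H)^{-n}$ and $b_2(H)=\sqrt{n(n-1)}\,q_2(H)^{-n}$; in other words $b_1$ and $b_2$ are exactly the values of $|\Phi|$ when $g$ sits at the two critical points of the potential $f$. With this in hand, part~(1) is immediate, being a verbatim restatement of Theorem~\ref{deSit}(1), and for part~(2) (when $|H|>1$ or $H=-1$) Theorem~\ref{deSit} gives that $g$ is of type~2 or type~3, so $\sup g=+\infty$ and thus $\inf|\Phi|=0$; the infimum of $g$ is a root of $f$ that is $\ge q_1(H)$, with equality exactly at the boundary value $C=r_1(H)$ (the type~2 solution), whence $\sup|\Phi|=\sqrt{n(n-1)}(\inf g)^{-n}\le b_1(H)$, with equality realized by the non-constant type~2 immersion.

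For part~(3), when $-1<H<-\tfrac{2\sqrt{n-1}}{n}$, the potential has a local minimum at $q_1(H)$ and a local maximum at $q_2(H)$ with $q_1<q_2$, and $g$ is a type~5 solution when $C=r_1(H)$ and a type~1 solution when $r_2(H)<C<r_1(H)$. In each of these the range of $g$ is an interval whose infimum is $\ge q_1(H)$ and whose interior contains $q_2(H)$; reading off $\inf g$ and $\sup g$ and applying the monotone dictionary then yields $\inf|\Phi|\le b_2(H)\le \sup|\Phi|\le b_1(H)$. The equality $\inf|\Phi|=b_2(H)$ forces $\sup g=q_2(H)$, which in turn collapses the oscillating orbit to the equilibrium solution $g\equiv q_2(H)$, i.e. to the isoparametric case; this gives the claimed ``if and only if the principal curvatures are constant''.

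The one genuinely nontrivial point is the geometric placement of the range of $g$ relative to the critical points, namely that for the type~1 and type~5 solutions the range always straddles the local maximum $q_2(H)$ (so that $b_2(H)$ is attained as an interior value of $|\Phi|$, forcing $\inf|\Phi|\le b_2(H)\le\sup|\Phi|$) while remaining above the local minimum $q_1(H)$ (so that $b_1(H)$ is a never-exceeded upper bound). This is settled by the sign analysis of $f$ carried out in the proof of Theorem~\ref{deSit}: the facts that $f\to+\infty$ as $v\to0^+$ and, for $|H|<1$, $f\to-\infty$ as $v\to\infty$ pin down which of the intervals cut out by the roots of $f$ actually supports a complete positive solution. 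Finally, the existence of the extremal non-constant examples claimed in parts~(2) and~(3) follows, as at the end of the proof of Theorem~\ref{hyp}, from the fact that any positive whole-line solution $g$ yields an immersion through the linear system for $\alpha,\beta$ in Theorem~\ref{cneq0aneq0}.
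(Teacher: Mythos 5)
Your proposal is correct and takes essentially the same route as the paper: the paper's proof of Theorem \ref{deSittershape} is exactly to read the statement off from the $(H,C)$ classification of Theorem \ref{deSit}, via the identity $|\Phi|=\sqrt{n(n-1)}\,g^{-n}$ and the ranges of the five solution types recorded at the start of Section \ref{s2 results}. Your explicit dictionary $b_i(H)=\sqrt{n(n-1)}\,q_i(H)^{-n}$ and the placement of the range of $g$ relative to the critical points $q_1(H)<q_2(H)$ merely spell out details the paper leaves implicit in its one-line proof.
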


\begin{proof} The proof follows directly from Theorem \ref{deSit}. 

\end{proof}

Figures \ref{aeq1beqn1ex1}, \ref{aeq1beqn1ex2} and \ref{aeq1beqn1ex3}  show some the graph of the function $|\Phi|=\sqrt{n(n-1)} \omega^{-n}(t)$ for some particular interesting hypersurfaces discussed in this Theorem.

\begin{rem} The Example displayed in Figure \ref{aeq1beqn1ex3} shows an example where $g(t)$ is not periodic, $H$ lies between $-1$ and $-2\frac{\sqrt{n-1}}{n}$. In \cite{W3}, Theorem 4.2 part (3), it is wrongly stated that for all the examples with $H$ between $-1$ and $-2\frac{\sqrt{n-1}}{n}$ $g(t)$  must be periodic.

\end{rem}

%
% %% Shape sphere
%
\subsection{Hypersurfaces of the sphere: case $ad=1$ and $bd=1$} 

\begin{thm} \label{Sphereshape} Let $M\subset \sf$ be a complete hypersurface with two principal curvatures, one of them with multiplicity one. Assume that $M$ has constant mean curvature $H$.  Also assume that the sign of $H$ is determined by the condition $\lambda-\mu>0$.  If $n>2$, $ad=1$ and $bd=1$. We have that for any $H$,  $\inf( |\Phi|)\le b_1(H)\le \sup( |\Phi|)$ with equality holding in any of the two inequalities  if and only if the principal curvatures are constant. 

\end{thm}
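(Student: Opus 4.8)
The plan is to reduce the statement to the single scalar quantity $g^{-n}$. From the computation preceding the theorem we have $|\Phi|^2 = n(n-1)g^{-2n}$, so along any integral curve $\gamma$ of $e_n$,
$$|\Phi| = \sqrt{n(n-1)}\, g(t)^{-n}, \com{where} g(t) = w(\gamma(t))$$
solves $(g')^2 = f(g)$ with $f(v) = C - (1+H^2)v^2 - v^{2-2n} - 2Hv^{2-n}$, the case $ad=bd=1$. By Theorem \ref{Sph}, for non-constant principal curvatures $C>r_1(H)$ and $g$ is a \emph{type 1} solution: it is periodic with range $[v_1,v_2]$, and $f$ has a single positive critical point $q_1(H)$, a maximum (since $f\to-\infty$ at both $0^+$ and $\infty$), lying strictly between its two simple roots $v_1<q_1(H)<v_2$. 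Because $s\mapsto \sqrt{n(n-1)}\,s^{-n}$ is strictly decreasing on $(0,\infty)$, the extrema of $|\Phi|$ are attained at the endpoints of the range of $g$, so $\max(|\Phi|)=\sqrt{n(n-1)}\,v_1^{-n}$ and $\min(|\Phi|)=\sqrt{n(n-1)}\,v_2^{-n}$.

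The one genuine computation is the identity $\sqrt{n(n-1)}\,q_1(H)^{-n}=b_1(H)$, and I would carry this out in full. Setting $f'(v)=0$ and writing $x=v^{-n}$ reduces the critical-point condition to the quadratic $(n-1)x^2 + H(n-2)x - (1+H^2)=0$, whose positive root is
$$q_1(H)^{-n} = \frac{\sqrt{H^2 n^2 + 4n - 4} - H(n-2)}{2(n-1)}.$$
Multiplying by $\sqrt{n(n-1)}$, using $\frac{\sqrt{n(n-1)}}{2(n-1)}=\frac{\sqrt n}{2\sqrt{n-1}}$ and $ab=1$ (so that $4ab(n-1)=4n-4$), recovers exactly the defining expression for $b_1(H)$.

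Combining these, and using $v_1<q_1(H)<v_2$ together with the monotonicity of $s\mapsto s^{-n}$, I obtain
$$\inf(|\Phi|) = \sqrt{n(n-1)}\,v_2^{-n} \le b_1(H) \le \sqrt{n(n-1)}\,v_1^{-n} = \sup(|\Phi|).$$
For the equality clause, equality in the left (resp. right) inequality forces $v_2=q_1(H)$ (resp. $v_1=q_1(H)$); since $q_1(H)$ is the interior maximum strictly separating the two simple roots of $f$ whenever $C>r_1(H)$, such a coincidence can occur only in the limiting case $C=r_1(H)$, where $f$ acquires a double zero and $g$ degenerates to the equilibrium $g\equiv q_1(H)$. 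That constant solution is precisely the isoparametric case $\lambda=H+q_1(H)^{-n}$, $\mu=H-(n-1)q_1(H)^{-n}$, for which $|\Phi|\equiv b_1(H)$ by the identity above; this also disposes of the constant-principal-curvature case not covered by Theorem \ref{Sph} and completes the ``if and only if''. The main obstacle is not conceptual but lies in this equality bookkeeping: one must confirm that a genuine (non-constant) type 1 solution never has a range endpoint equal to $q_1(H)$, which is exactly the statement that $f$ acquires a double root only when $C=r_1(H)$.
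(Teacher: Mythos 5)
Your proposal is correct and follows essentially the same route as the paper: the paper's proof is a direct appeal to Theorem \ref{Sph} (type 1 periodic solutions with $C>r_1(H)$, range $[v_1,v_2]$ straddling the unique critical point $q_1(H)$ of $f$), combined with the formula $|\Phi|=\sqrt{n(n-1)}\,g^{-n}$ and the identity $\sqrt{n(n-1)}\,q_1(H)^{-n}=b_1(H)$, which is exactly what you carry out in detail, including the equality case via the equilibrium solution $g\equiv q_1(H)$ at $C=r_1(H)$. Your explicit verification of the quadratic for $q_1(H)^{-n}$ also correctly yields $4n-4$ under the radical, fixing what is evidently a sign typo ($4n+4$) in the paper's displayed formula for $q_1$.
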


\begin{proof} The proof follows directly from Theorem \ref{Sph}. 

\end{proof}
%%
%%ANTIDESITTER
%%
\subsection{Hypersurfaces of the anti-de Sitter space: case $ad=-1$ and $bd=-1$}

\begin{thm}  \label{Anti-deSittershape} Let $M\subset \sf$ be a complete hypersurface with two principal curvatures, one of them with multiplicity one. Assume that $M$ has constant mean curvature $H$.  Also assume that the sign of $H$ is determined by the condition $\lambda-\mu>0$.  If $n>2$, $ad=-1$ and $bd=-1$. We have that for any $H$, $\inf( |\Phi|)=0$ and  $ \sup(|\Phi|)\le b_1(H)$. There are examples with non constant principal curvature that satisfy $\sup(|\Phi|)\le b_1(H)$.

\end{thm}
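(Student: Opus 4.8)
The plan is to deduce the entire statement from Theorem \ref{AntideSit} together with the identity $|\Phi|^2=n(n-1)g(t)^{-2n}$ recorded at the start of this section, where $g(t)=\omega(\gamma(t))$ ranges over the set described in Proposition \ref{typers}. Since $g\mapsto\sqrt{n(n-1)}\,g^{-n}$ is a strictly decreasing bijection on $(0,\infty)$, the infimum of $|\Phi|$ is governed by the supremum of $g$, and the supremum of $|\Phi|$ by the infimum of $g$. Thus the whole theorem is just the translation, through this monotone map, of the range data for the type 2 and type 3 solutions supplied by Theorem \ref{AntideSit}; in particular the assertion that $H$ may be any real number is already part of that theorem.

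First I would settle $\inf(|\Phi|)=0$. By Theorem \ref{AntideSit}, for every admissible $(H,C)$ the solution $g$ is of type 2 when $C=r_1(H)$ and of type 3 when $C<r_1(H)$, and in both cases the range of $g$ is unbounded above. Hence $g(t)^{-n}$ takes values arbitrarily close to $0$, so $\inf(|\Phi|)=\sqrt{n(n-1)}\,\inf(g^{-n})=0$.

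Next I would prove $\sup(|\Phi|)\le b_1(H)$. The supremum of $|\Phi|$ corresponds to the infimum of $g$, which by the proof of Theorem \ref{AntideSit} is the critical point $q_1(H)$ for a type 2 solution and the larger root $v_2>q_1(H)$ for a type 3 solution (the positivity interval giving a complete solution is $(v_2,\infty)$, lying to the right of the unique minimum $q_1(H)$ of $f$). For a type 2 solution, $\sup(|\Phi|)=\sqrt{n(n-1)}\,q_1(H)^{-n}$, and a direct computation using $q_1(H)^{-n}=\frac{\sqrt{H^2n^2+4n-4}-H(n-2)}{2(n-1)}$, together with the fact that $ab=1$ here (since $ad=bd=-1$ forces $abd^2=1$, and $d^2=1$), shows that this equals exactly $b_1(H)$. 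For a type 3 solution the infimum of $g$ is $v_2>q_1(H)$, so $\sup(|\Phi|)=\sqrt{n(n-1)}\,v_2^{-n}<\sqrt{n(n-1)}\,q_1(H)^{-n}=b_1(H)$. Combining the two cases gives $\sup(|\Phi|)\le b_1(H)$, with equality precisely for the type 2 solutions; since $C=r_1(H)$ is admissible for every $H$, these supply the asserted examples with non-constant principal curvatures realizing $\sup(|\Phi|)=b_1(H)$.

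The only genuine computation is the algebraic identity $\sqrt{n(n-1)}\,q_1(H)^{-n}=b_1(H)$ and the accompanying verification that, when $C<r_1(H)$, the root bounding the complete solution from below is the one exceeding $q_1(H)$. Both reduce to solving the quadratic $(n-1)x^2+H(n-2)x-(1+H^2)=0$ in $x=v^{-n}$ (obtained from $f'(v)=0$), whose positive root produces $q_1(H)$ and whose discriminant simplifies to $H^2n^2+4n-4=H^2n^2+4ab(n-1)$, matching the radicand in $b_1(H)$. Everything else is bookkeeping, and I expect this radicand matching and the placement $v_1<q_1(H)<v_2$ of the roots relative to the minimum of $f$ to be the main point requiring care.
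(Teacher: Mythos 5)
Your proposal is correct and takes essentially the same route as the paper: the paper's entire proof is the one-line statement that the result ``follows directly from Theorem \ref{AntideSit},'' and your argument is precisely that deduction spelled out --- translating the type 2/type 3 range data through $|\Phi|=\sqrt{n(n-1)}\,g^{-n}$, verifying $\sqrt{n(n-1)}\,q_1(H)^{-n}=b_1(H)$ via the quadratic $(n-1)x^2+H(n-2)x-(1+H^2)=0$ with $ab=1$, and placing the roots as $v_1<q_1(H)<v_2$ so that type 3 solutions give strict inequality while the type 2 solutions (with $C=r_1(H)$, which exists for every $H$) supply the non-constant examples attaining the bound.
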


\begin{proof} The proof follows directly from Theorem \ref{AntideSit}. 
\end{proof}

Figures \ref{aeqn1beqn1ex1}  shows  the graph of the function $|\Phi|=\sqrt{n(n-1)} \omega^{-n}(t)$ for a particular interesting hypersurface discussed in this Theorem.

\begin{rem} If we use the same values for $C=r_1(H)$ and $H$ as in the Example displayed in Figure \ref{aeqn1beqn1ex1} and the initial condition $g(0)=q_1(H)=\sqrt[4]{\frac{3}{\sqrt{19}+2}}$, 
then we would obtain an example with constant principal curvatures. In  \cite{W1}, Lemma 4.2, it is wrongly stated that the only examples with $C=r_1(H)$ are those with constant principal curvatures.

\end{rem}
%%
%%Euclidean
%%

\subsection{Hypersurfaces of the Euclidean space: case $a=0$ and $bd=1$} 

\begin{thm} \label{Euclideanshape} Let $M\subset \sf$ be a complete hypersurface with two principal curvatures, one of them with multiplicity one. Assume that $M$ has constant mean curvature $H$.  Also assume that the sign of $H$ is determined by the condition $\lambda-\mu>0$.  If $n>2$, $a=0$ and $bd=1$. We have that for any $H\ne0$,   $0<\inf( |\Phi|)\le b_3(H)\le \sup( |\Phi|)$ with equality holding in any of the two inequalities  if and only if the principal curvatures are constant. Moreover, if $H=0$ then,  $\inf( |\Phi|)=0$

\end{thm}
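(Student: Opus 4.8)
The plan is to reduce everything to the behaviour of the profile function $g$, using the identity $|\Phi|^2 = n(n-1)g^{-2n}$ established at the start of Section \ref{s2 results}. Since $g$ is positive, the function $|\Phi| = \sqrt{n(n-1)}\, g^{-n}$ is a strictly decreasing function of $g$; hence controlling $|\Phi|$ is the same as controlling the range of $g$, and this range is exactly what Theorem \ref{Euc} describes in terms of $H$ and $C$.

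First I would treat the generic case $H\ne0$. By Theorem \ref{Euc}, any complete non-isoparametric example has $C>r_1(H)$ and $g$ of \emph{type 1}, so $g$ is periodic with range $[v_1,v_2]$, where $v_1<v_2$ are the two positive roots of $f$ and the unique positive critical point $q_1(H)$ of $f$ (a maximum) lies strictly between them, $v_1<q_1(H)<v_2$. Because $|\Phi|$ decreases in $g$, it attains $\max(|\Phi|)=\sqrt{n(n-1)}\,v_1^{-n}$ and $\min(|\Phi|)=\sqrt{n(n-1)}\,v_2^{-n}$; the latter is positive since $v_2<\infty$, giving $0<\inf(|\Phi|)$.

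The key computation is to identify the intermediate value $b_3(H)$ with $|\Phi|$ evaluated at the critical point $q_1(H)$. Using the formula for $q_1(H)$ in this subsection one gets $q_1(H)^{-n}=-H$ when $H<0$ and $q_1(H)^{-n}=H/(n-1)$ when $H>0$, so that in both cases $\sqrt{n(n-1)}\,q_1(H)^{-n}=b_3(H)$. Combining this identity with the interlacing $v_1<q_1(H)<v_2$ and monotonicity yields $\min(|\Phi|)\le b_3(H)\le\max(|\Phi|)$. For the rigidity statement, equality in either inequality forces the corresponding root to coincide with the interior critical point $q_1(H)$; since $q_1(H)$ is the only critical point of $f$ and $f$ is strictly positive on the open interval $(v_1,v_2)$, this can happen only when $v_1=v_2=q_1(H)$, i.e.\ when $C=r_1(H)$ and $g\equiv q_1(H)$ is constant --- equivalently, the principal curvatures are constant. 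Conversely, constant principal curvatures give $g\equiv q_1(H)$ and hence $|\Phi|\equiv b_3(H)$.

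Finally, for $H=0$, Theorem \ref{Euc} gives $C>0$ and $g$ of \emph{type 3}, whose range is $[v_3,\infty)$; thus $g^{-n}$ takes values in $(0,v_3^{-n}]$ and $\inf(|\Phi|)=0$, never attained. The only real work is the explicit check that $b_3(H)=\sqrt{n(n-1)}\,q_1(H)^{-n}$ and the verification that $q_1(H)$ is genuinely interior to $(v_1,v_2)$; everything else is the monotonicity of $g\mapsto g^{-n}$ together with the classification already proved in Theorem \ref{Euc}.
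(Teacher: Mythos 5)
Your proposal is correct and follows exactly the route the paper intends: the paper's proof is just the one-line statement that the result ``follows directly from Theorem \ref{Euc}'', and your argument is precisely that reduction, via the identity $|\Phi|=\sqrt{n(n-1)}\,g^{-n}$, the type--1/type--3 classification of Theorem \ref{Euc}, and the check that $b_3(H)=\sqrt{n(n-1)}\,q_1(H)^{-n}$ with $v_1<q_1(H)<v_2$ (the interlacing already recorded in the paper's remark on solution types). In effect you have written out the details the paper leaves implicit, including the rigidity argument that equality forces $v_1=v_2=q_1(H)$, i.e.\ $C=r_1(H)$ and constant principal curvatures.
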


\begin{proof} The proof follows directly from Theorem \ref{Euc}. 

\end{proof}
%%
%%Minkowski
%%

\subsection{Hypersurfaces of the Minkowski space: case $a=0$ and $bd=-1$} 

\begin{thm} \label{Minkowskianshape} Let $M\subset \sf$ be a complete hypersurface with two principal curvatures, one of them with multiplicity one. Assume that $M$ has constant mean curvature $H$.  Also assume that the sign of $H$ is determined by the condition $\lambda-\mu>0$.  If $n>2$, $a=0$ and $bd=-1$ then, for any $H$,   $\inf( |\Phi|)=0$ and  $\sup( |\Phi|)\le b_3(H)$.  There are examples with non constant principal curvature that satisfy $\sup(|\Phi|) =  b_3(H)$.
\end{thm}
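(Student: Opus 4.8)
The plan is to reduce the entire statement to the analysis of the profile function $g$ already carried out in Theorem \ref{Mink}, via the pointwise identity $|\Phi|=\sqrt{n(n-1)}\,g^{-n}$ recorded at the beginning of this section (with $g(t)=\omega(\gamma(t))$). Since $g>0$, the map $g\mapsto\sqrt{n(n-1)}\,g^{-n}$ is strictly decreasing, so the supremum of $|\Phi|$ is attained where $g$ is smallest and the infimum of $|\Phi|$ where $g$ is largest. Thus the whole theorem is a translation of the range information for $g$ into statements about $|\Phi|$. For $H=0$ there is no complete example by Theorem \ref{Mink} and $b_3(0)=0$, so the assertion is vacuous; I therefore assume $H\neq 0$.

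First I would invoke Theorem \ref{Mink}: in the case $a=0$, $bd=-1$ every complete non-isoparametric example has $C\le r_1(H)$, with $g$ of type $2$ when $C=r_1(H)$ and of type $3$ when $C<r_1(H)$. In both types $g$ is unbounded above (its range is $(q_1(H),\infty)$ in type $2$ and $[v_2,\infty)$ in type $3$), so $g^{-n}$ has infimum $0$, never attained; hence $\inf(|\Phi|)=0$, which is the first claim.

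The substantive point is the supremum, which equals $\sqrt{n(n-1)}\,(\inf g)^{-n}$. The key computation is the algebraic identity $b_3(H)=\sqrt{n(n-1)}\,q_1(H)^{-n}$: substituting the Minkowski formula for $q_1(H)$ gives $q_1(H)^{-n}=-H$ when $H<0$ and $q_1(H)^{-n}=H/(n-1)$ when $H>0$, and multiplying by $\sqrt{n(n-1)}$ reproduces exactly the two branches of $b_3$. In the type $2$ case $\inf g=q_1(H)$ (the value is approached but never reached), so $\sup(|\Phi|)=b_3(H)$; in the type $3$ case $\inf g=v_2>q_1(H)$, since $v_2$ is the larger root of $f$, lying to the right of the unique critical point $q_1(H)$, whence $\sup(|\Phi|)=\sqrt{n(n-1)}\,v_2^{-n}<b_3(H)$. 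Together these give $\sup(|\Phi|)\le b_3(H)$ for every complete example, and the type $2$ solutions (those with $C=r_1(H)$), which have non-constant principal curvatures, realize the equality $\sup(|\Phi|)=b_3(H)$.

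I expect the only nonroutine step to be the verification of $b_3(H)=\sqrt{n(n-1)}\,q_1(H)^{-n}$, together with the monotonicity bookkeeping that distinguishes a supremum which is a genuine limit (type $2$, not attained) from one which is a maximum (type $3$, attained at $v_2$); the sign convention fixing $H$ through $\lambda-\mu>0$ must be tracked to select the correct branch of the piecewise formulas. Everything else is an immediate consequence of Theorem \ref{Mink} and the decreasing change of variables $g\mapsto g^{-n}$.
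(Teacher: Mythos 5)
Your proposal is correct and is essentially the paper's own argument: the paper's proof consists of the single line ``The proof follows directly from Theorem \ref{Mink}'', and your write-up supplies exactly the bookkeeping that reduction requires --- the pointwise identity $|\Phi|=\sqrt{n(n-1)}\,g^{-n}$, the verification $b_3(H)=\sqrt{n(n-1)}\,q_1(H)^{-n}$, and the translation of the type~2 range $(q_1(H),\infty)$ and type~3 range $[v_2,\infty)$ (with $v_2>q_1(H)$) into $\inf(|\Phi|)=0$, $\sup(|\Phi|)\le b_3(H)$, with equality precisely for the non-isoparametric type~2 solutions at $C=r_1(H)$. Your handling of $H=0$ (vacuous, by the paper's standing assumption of non-constant principal curvatures and Theorem \ref{Mink}) also matches the paper's intent.
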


\begin{proof} The proof follows directly from Theorem \ref{Mink}. 

\end{proof}

\begin{rem} The example displayed in Figure \ref{aeq0beqn1ex1} shows cmc hypersurface with two non-constant principal curvatures with $C=0$.  In  \cite{W2}, Theorem 4.2, it is wrongly stated that these type of examples (with $C=0$) do not exist.

\end{rem}

\section{Images showing some particular examples}

\begin{figure}[hbtp]
\begin{centering}\includegraphics[width=.4\textwidth]{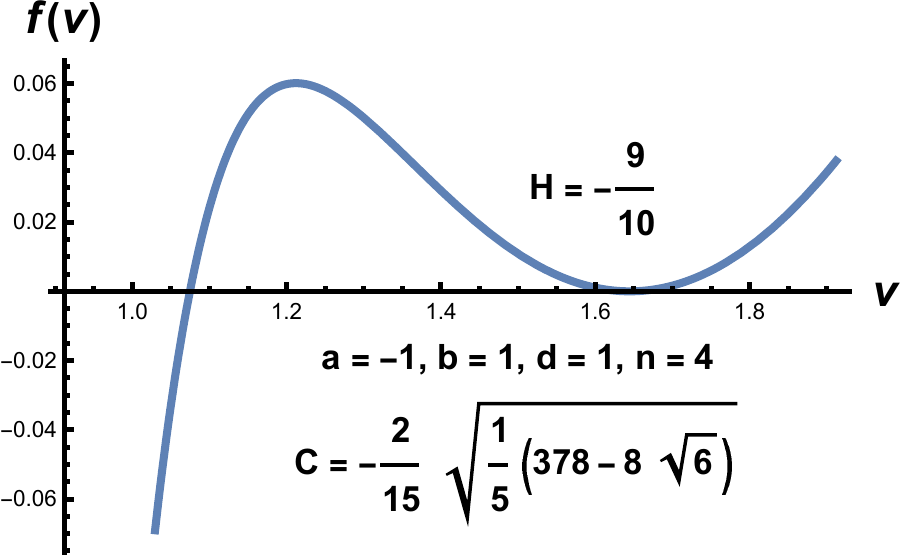} \includegraphics[width=.4\textwidth]{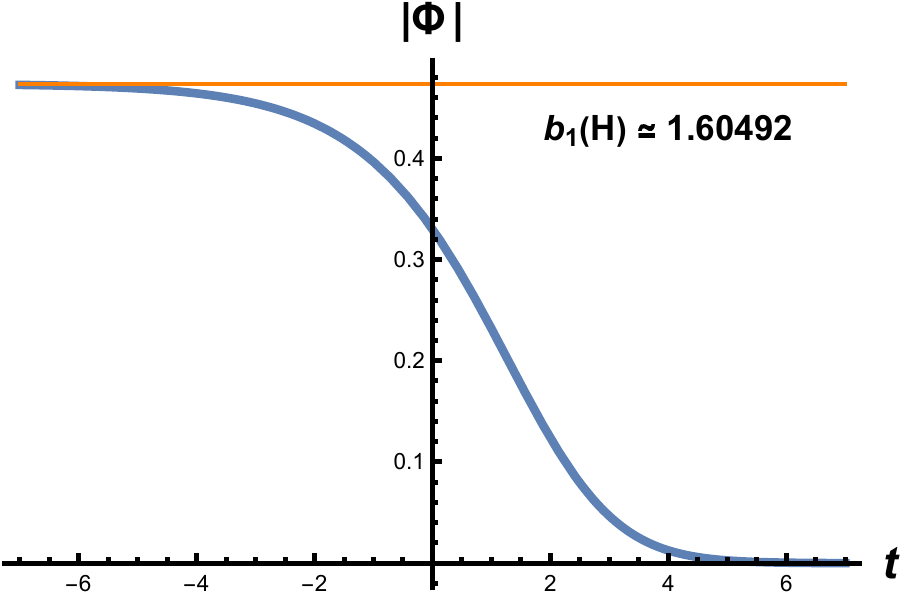}
\end{centering}
\caption{The second image shows the graph of the function $|\Phi |=\sqrt{12}\, g(t)^{-4}$ for the cmc hypersurface in the 5 dimensional hyperbolic space $SF(5,0,-1)$. For this example $C=r_2(H)=-\frac{2}{15}  \sqrt{\frac{1}{5} \left(378-8 \sqrt{6}\right)}$, $H=-9/10$, $\sup(|\Phi |)=b_1(H)=\frac{2 \sqrt{6}+9}{5 \sqrt{3}}\approx 1.60492$. The image on the left shows the graph of the function $f(v)$. Recall that $(g^\prime(t))^2=f(g(t))$. We used the initial condition $g(0)=1.8$.
}\label{aeqn1beq1ex3}
\end{figure}

\begin{figure}[hbtp]
\begin{centering}\includegraphics[width=.4\textwidth]{aeqn1beq1ffex2.eps} \includegraphics[width=.4\textwidth]{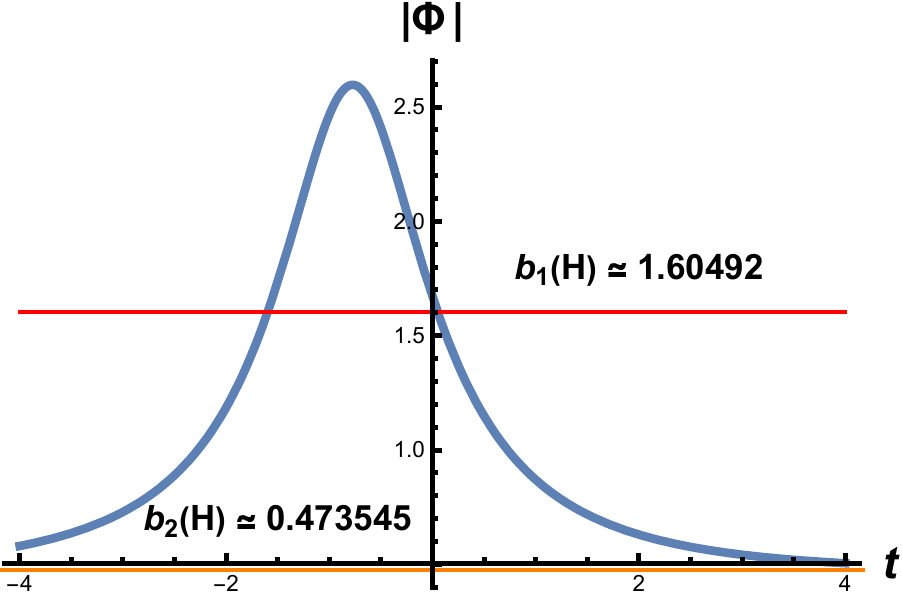}
\end{centering}
\caption{The second image shows the graph of the function $|\Phi |=\sqrt{12}\, g(t)^{-4}$ for the cmc hypersurface in the 5 dimensional hyperbolic space $SF(5,0,-1)$. For this example $C=r_2(H)=-\frac{2}{15}  \sqrt{\frac{1}{5} \left(378-8 \sqrt{6}\right)}$, $H=-9/10$, $\inf(|\Phi |)=b_2(H)=\frac{9-2 \sqrt{6}}{5 \sqrt{3}}\approx 0.473545$. The image on the left shows the graph of the function $f(v)$. Recall that $(g^\prime(t))^2=f(g(t))$. We used the initial condition $g(0)=1.2$.
}\label{aeqn1beq1ex2}
\end{figure}

\begin{figure}[hbtp]
\begin{centering}\includegraphics[width=.4\textwidth]{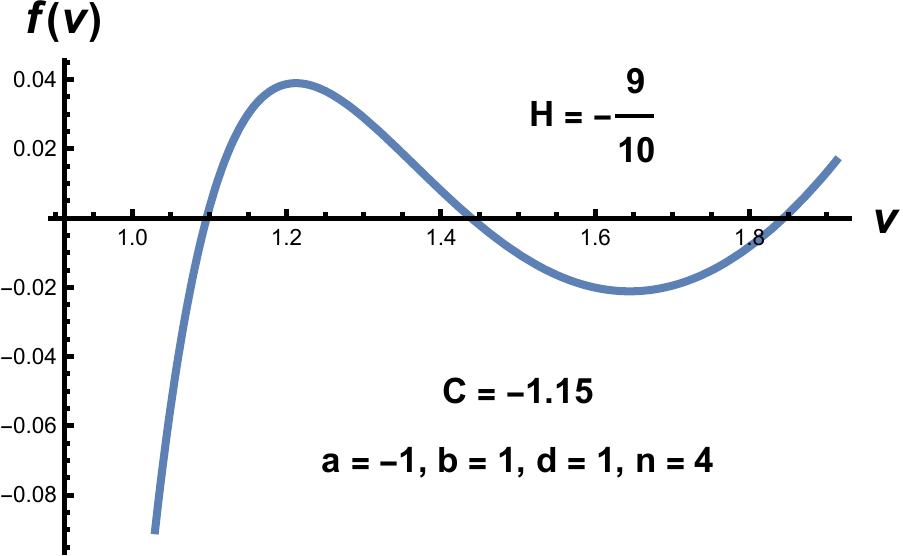} \includegraphics[width=.4\textwidth]{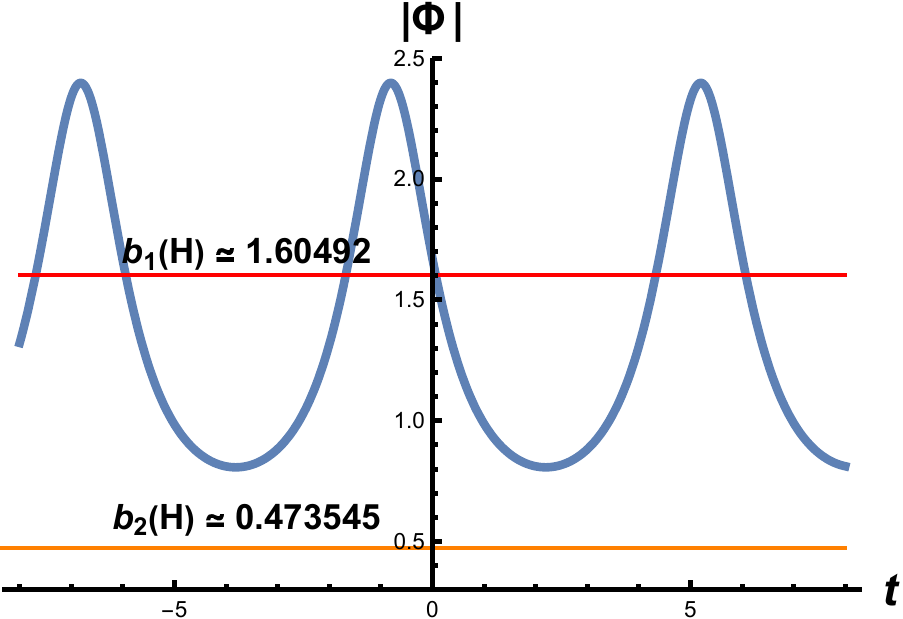}
\end{centering}
\caption{The second image shows the graph of the function $|\Phi |=\sqrt{12}\, g(t)^{-4}$ for the cmc hypersurface in the 5 dimensional hyperbolic space $SF(5,0,-1)$. For this example $C=-1.15$, $H=-9/10$. We used the initial condition $g(0)=1.2$.
}\label{aeqn1beq1ex4}
\end{figure}

\begin{figure}[hbtp]
\begin{centering}\includegraphics[width=.4\textwidth]{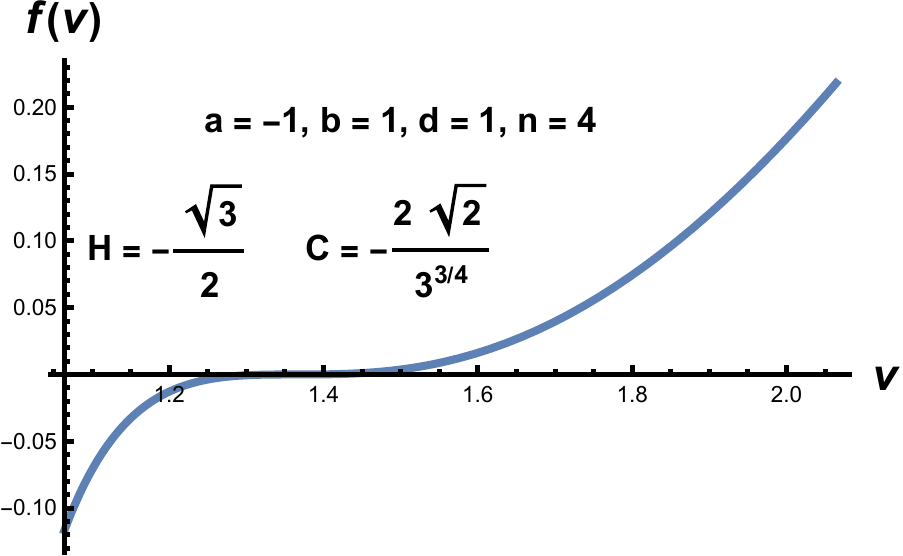} \includegraphics[width=.4\textwidth]{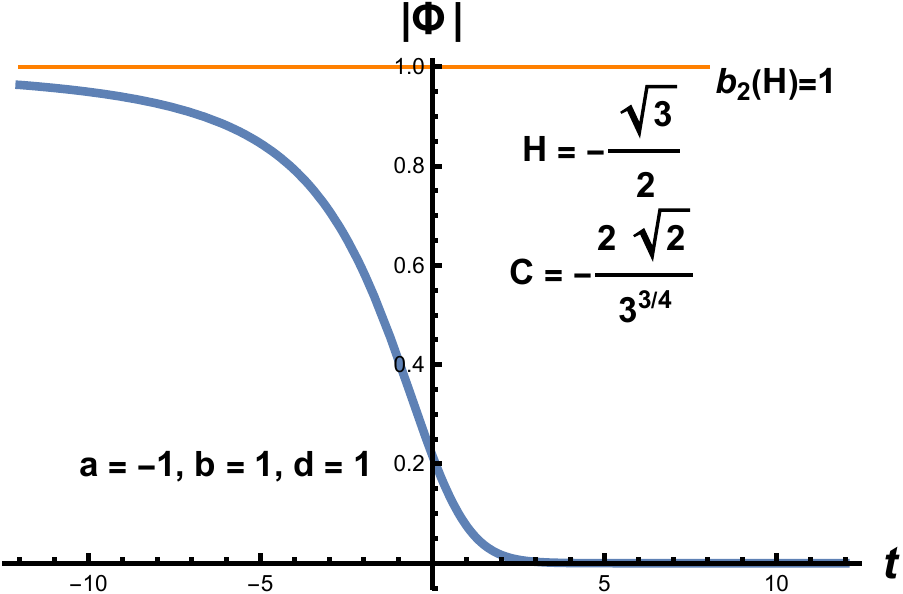}
\end{centering}
\caption{The second image shows the graph of the function $|\Phi |=\sqrt{12}\, g(t)^{-4}$ for the cmc hypersurface in the 5 dimensional hyperbolic space $SF(5,0,-1)$. For this example $C=r_1(H)$, $H=-2\frac{\sqrt{n-1}}{n}$, $\sup(|\Phi |)=b_1(H)=b_2(H)=1$. The image on the left shows the graph of the function $f(v)$. Recall that $(g^\prime(t))^2=f(g(t))$. We used the initial condition $g(0)=2$.
}\label{aeqn1beq1ex1}
\end{figure}

%%de Sitter

\begin{figure}[hbtp]
\begin{centering}\includegraphics[width=.4\textwidth]{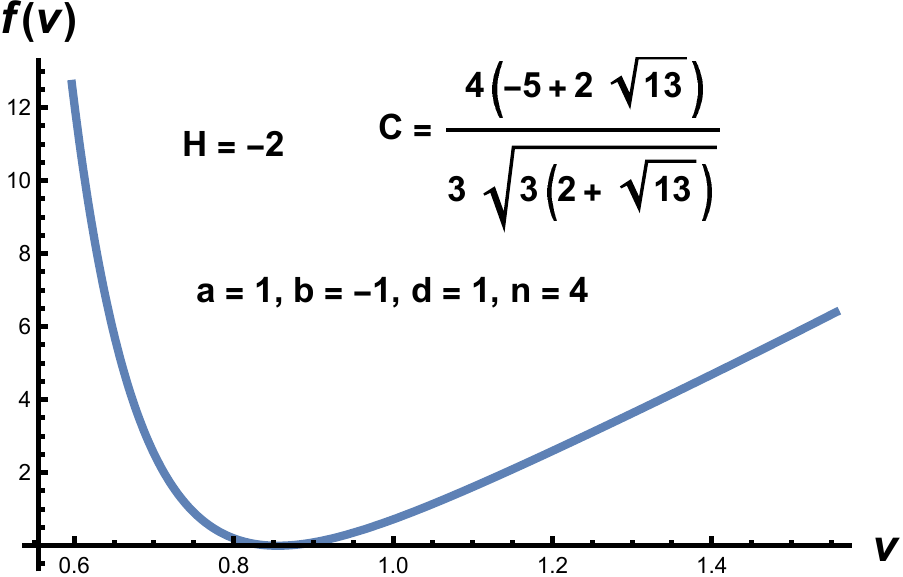} \includegraphics[width=.4\textwidth]{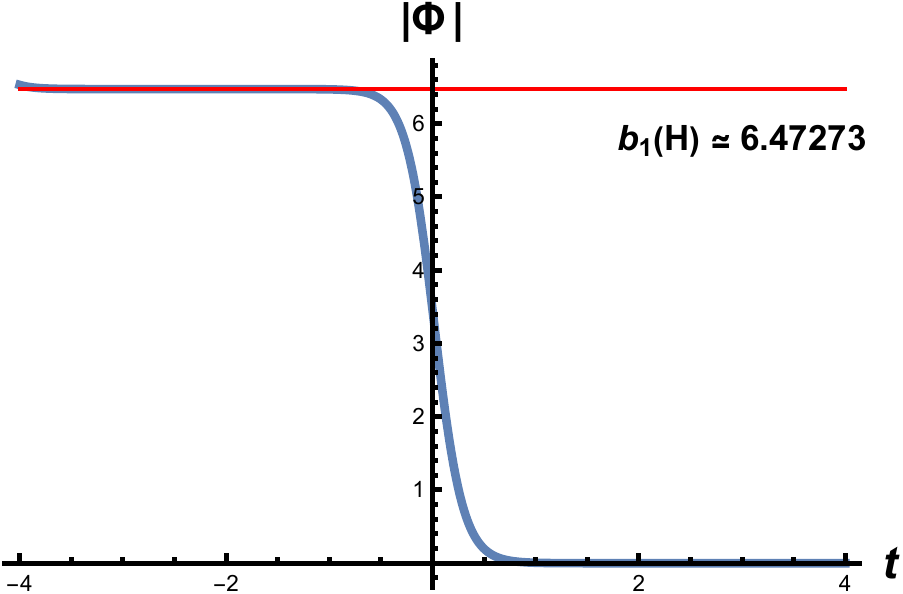}
\end{centering}
\caption{The second image shows the graph of the function $|\Phi |=\sqrt{12}\, g(t)^{-4}$ for the cmc hypersurface in the 5 dimensional de Sitter space $SF(5,1,1)$. For this example  $H=-2$,  $C=r_1(H)=\frac{4 \left(2 \sqrt{13}-5\right)}{3 \sqrt{3 \left(\sqrt{13}+2\right)}}$, $\sup(|\Phi |)=b_1(H)=\frac{2 \left(\sqrt{13}+2\right)}{\sqrt{3}}\approx 6.47273$. The image on the left shows the graph of the function $f(v)$. Recall that $(g^\prime(t))^2=f(g(t))$. We used the initial condition $g(0)=1$.
}\label{aeq1beqn1ex1}
\end{figure}

\begin{figure}[hbtp]
\begin{centering}\includegraphics[width=.4\textwidth]{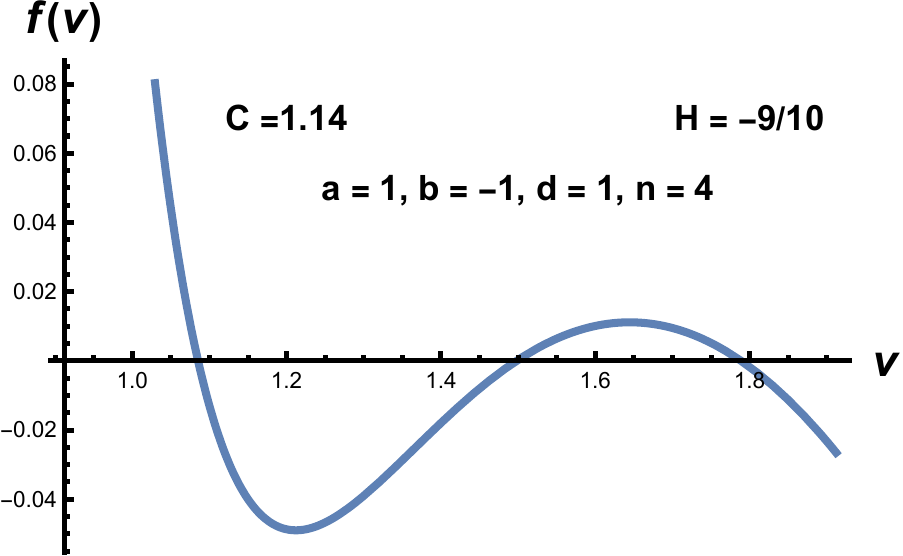} \includegraphics[width=.4\textwidth]{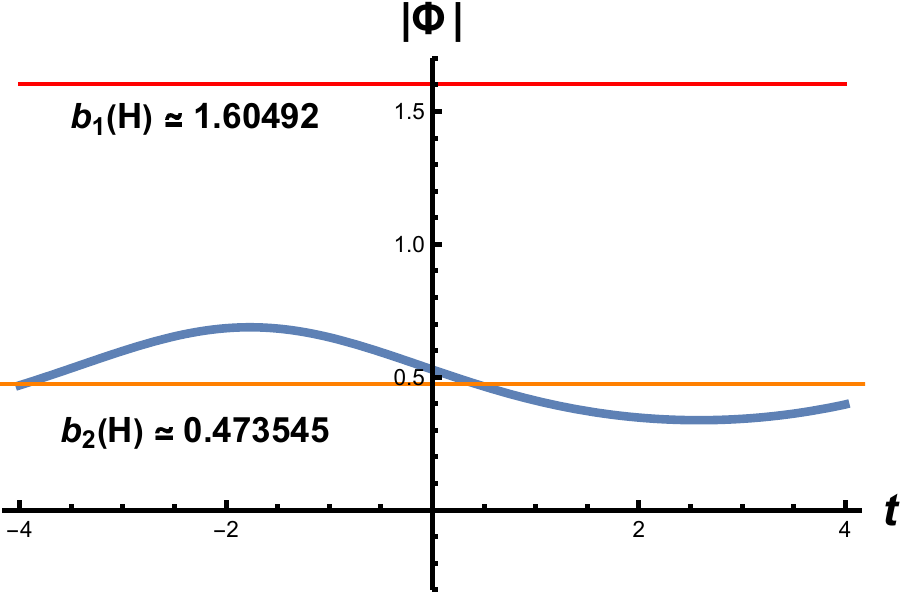}
\end{centering}
\caption{The second image shows the graph of the function $|\Phi |=\sqrt{12}\, g(t)^{-4}$ for the cmc hypersurface in the 5 dimensional de Sitter space $SF(5,1,1)$. For this example  $H=-9/10$,  $C=1.14$. The image on the left shows the graph of the function $f(v)$. Recall that $(g^\prime(t))^2=f(g(t))$. We used the initial condition $g(0)=1.6$.
}\label{aeq1beqn1ex2}
\end{figure}

\begin{figure}[hbtp]
\begin{centering}\includegraphics[width=.4\textwidth]{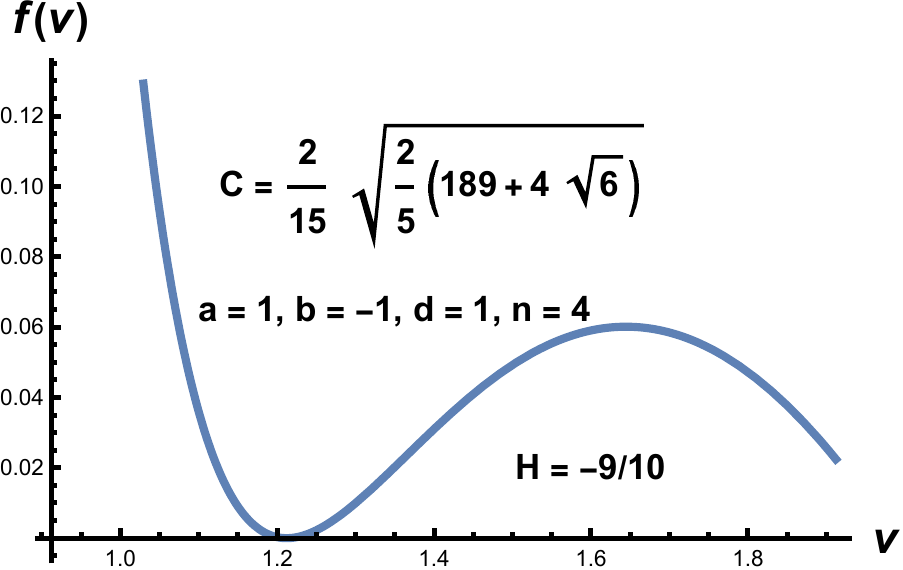} \includegraphics[width=.4\textwidth]{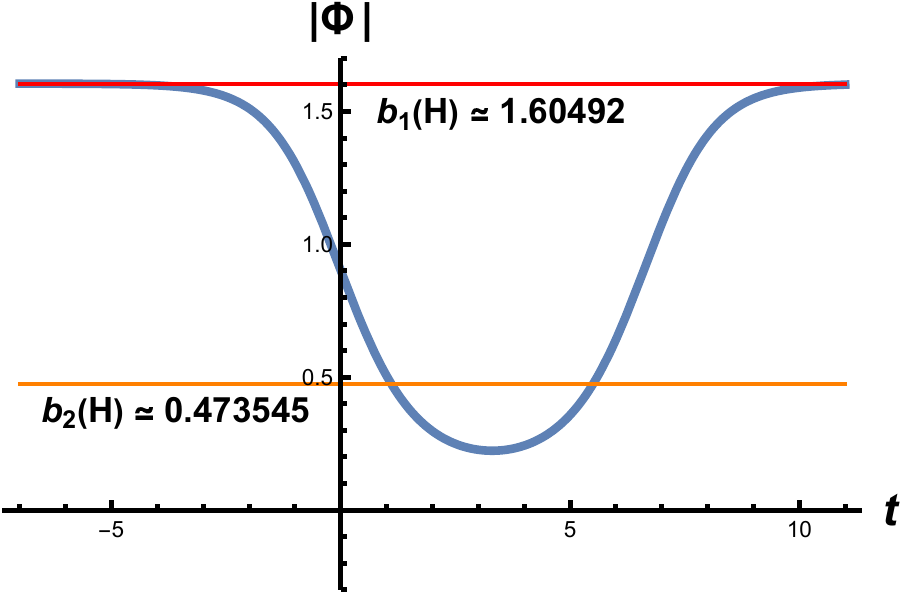}
\end{centering}
\caption{The second image shows the graph of the function $|\Phi |=\sqrt{12}\, g(t)^{-4}$ for the cmc hypersurface in the 5 dimensional de Sitter space $SF(5,1,1)$. For this example  $H=-9/10$,  $C=r_1(H)=\frac{2}{15} \sqrt{\frac{2}{5} \left(4 \sqrt{6}+189\right)}$, $\sup(|\Phi |)=b_1(H)=\frac{1}{5} \sqrt{12 \sqrt{6}+35}\approx 1.60492$. The image on the left shows the graph of the function $f(v)$. Recall that $(g^\prime(t))^2=f(g(t))$. We used the initial condition $g(0)=1.4$.
}\label{aeq1beqn1ex3}
\end{figure}

\begin{figure}[h]
\begin{centering}\includegraphics[width=.4\textwidth]{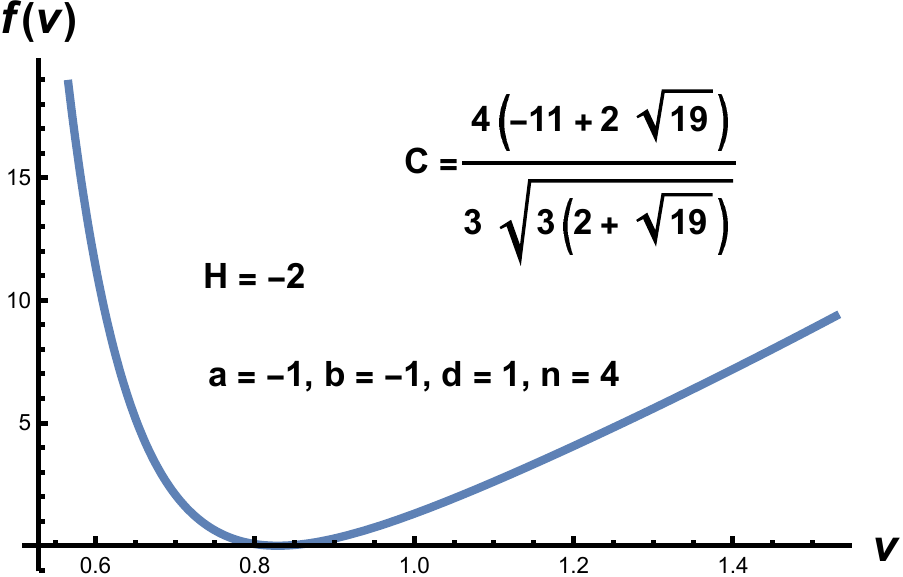} \includegraphics[width=.4\textwidth]{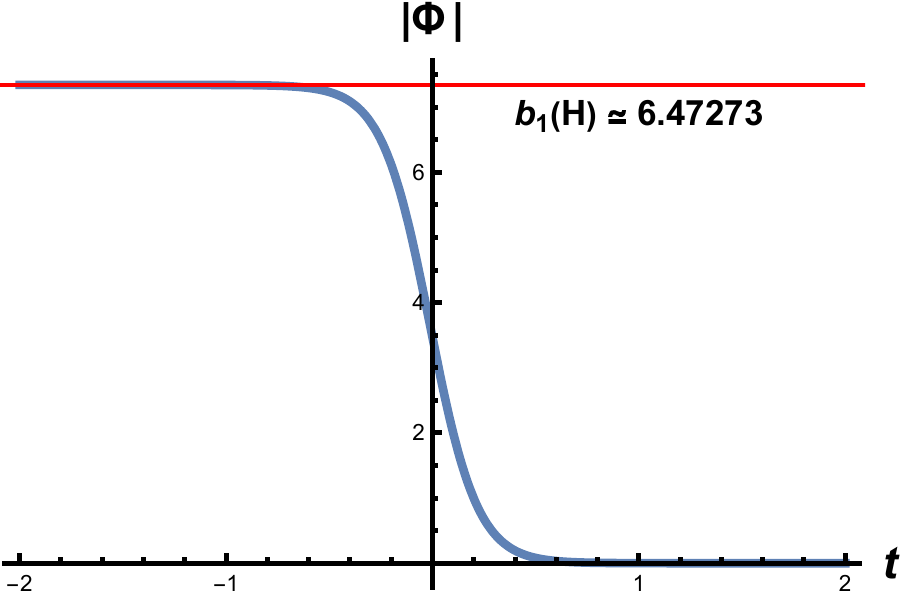}
\end{centering}
\caption{The second image shows the graph of the function $|\Phi |=\sqrt{12}\, g(t)^{-4}$ for the cmc hypersurface in the 5 dimensional Anti de Sitter space $SF(5,1,-1)$. For this example  $H=-2$,  $C=r_1(H)=\frac{4 \left(2 \sqrt{19}-11\right)}{3 \sqrt{3 \left(\sqrt{19}+2\right)}}$, $\sup(|\Phi |)=b_1(H)=\frac{2 \left(\sqrt{13}+2\right)}{\sqrt{3}}\approx 6.47273$. The image on the left shows the graph of the function $f(v)$. Recall that $(g^\prime(t))^2=f(g(t))$. We used the initial condition $g(0)=1$.
}\label{aeqn1beqn1ex1}
\end{figure}

\begin{figure}[hbtp]
\begin{centering}\includegraphics[width=.4\textwidth]{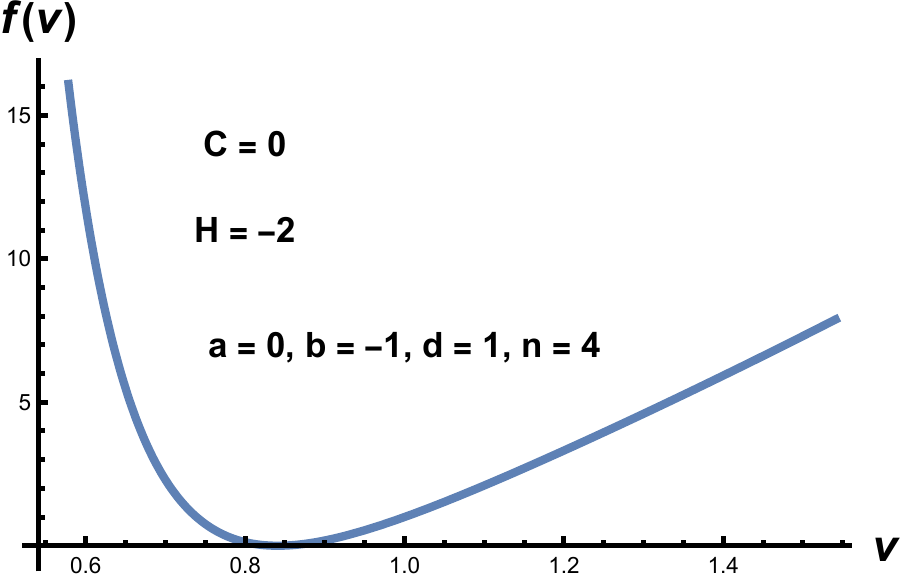} \includegraphics[width=.4\textwidth]{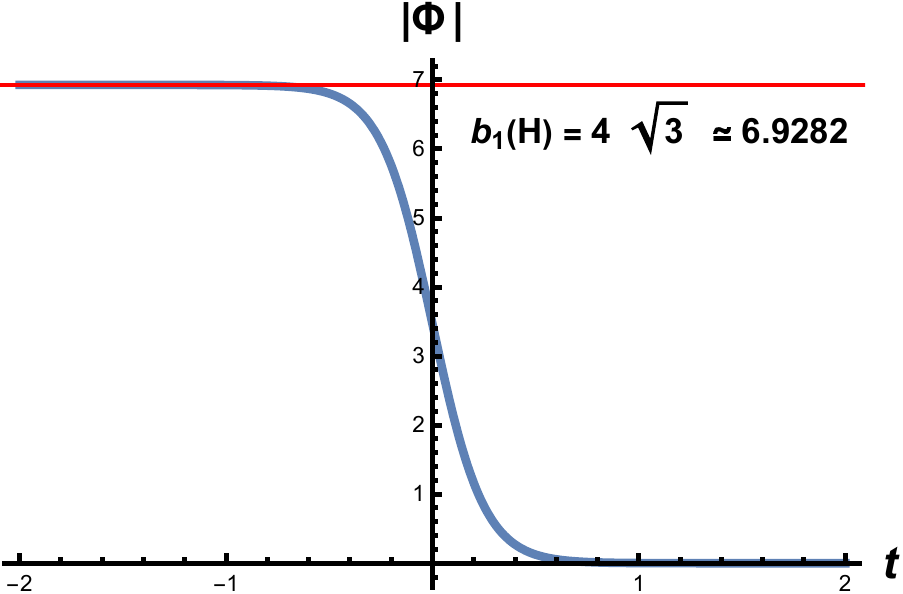}
\end{centering}
\caption{The second image shows the graph of the function $|\Phi |=\sqrt{12}\, g(t)^{-4}$ for the cmc hypersurface in the 5 dimensional Lorentz-Minkowski space $SF(5,1,0)$. For this example  $H=-2$,  $C=r_3(H)=0$, $\sup(|\Phi |)=b_3(H)=4 \sqrt{3}\approx 6.9282$. The image on the left shows the graph of the function $f(v)$. Recall that $(g^\prime(t))^2=f(g(t))$. We used the initial condition $g(0)=1$.
}\label{aeq0beqn1ex1}
\end{figure}

\vfil
\eject
\newpage

\end{document}